\newtheoremstyle{mio}%
	{}{} 
	{\itshape}{} 
	{\bfseries}{.}{ } 
	{#1 #2\thmnote{~\mdseries(#3)}} 
\theoremstyle{mio}
\newtheorem{teor}{Theorem}[section]
\newtheorem{cor}[teor]{Corollary}
\newtheorem{prop}[teor]{Proposition}
\newtheorem{lemma}[teor]{Lemma}
\newtheorem{defin}[teor]{Definition}
\newtheoremstyle{definition2}%
	{}{} 
	{}{} 
	{\bfseries}{.}{ } 
	{#1 #2\thmnote{\mdseries~ #3}} 
\theoremstyle{definition2}
\DeclareMathOperator{\Max}{Max}
\DeclareMathOperator{\rad}{rad}
\newcommand{\ins}[1]{\mathbb{#1}}
\newcommand{\insN}{\ins{N}}
\newcommand{\insZ}{\ins{Z}}
\newcommand{\insQ}{\ins{Q}}
\newcommand{\inN}{\in\insN}
\newcommand{\inZ}{\in\insZ}
\newcommand{\filtro}{\mathcal{F}}
\newcommand{\insfiltri}{\boldsymbol{\mathcal{F}}}
\newcommand{\insV}{\boldsymbol{\mathcal{V}}}
\newcommand{\nz}{\bullet}
\newcommand{\Cl}{\mathrm{Cl}}
\newcommand{\pow}[1]{\mathrm{pow}({#1})}
\newcommand{\setclos}[1]{\mathcal{X}(#1)}
\newcommand{\setdiv}[1]{\mathcal{D}(#1)}
\newcommand{\arp}{\arrow[dashed]{urr}}
\newcommand{\arpp}{\arrow[dashed]{urrrr}}
\title[The Golomb topology and units of quotients]{The Golomb topology on a Dedekind domain and the group of units of its quotients}
\author{Dario Spirito}
\date{\today}
\address{Dipartimento di Matematica e Fisica, Universit\`a degli Studi ``Roma Tre'', Roma, Italy}
\email{spirito@mat.uniroma3.it}
\subjclass[2010]{54G99; 54A10; 11A07; 13F05}
\keywords{Golomb space; Dedekind domains; homeomorphism problem}
\begin{document}

\begin{abstract}
We study the Golomb spaces of Dedekind domains with torsion class group. In particular, we show that a homeomorphism between two such spaces sends prime ideals into prime ideals and preserves the $P$-adic topology on $R\setminus P$. Under certain hypothesis, we show that we can associate to a prime ideal $P$ of $R$ a partially ordered set, constructed from some subgroups of the group of units of $R/P^n$, which is invariant under homeomorphisms, and use this result to show that the unique self-homeomorphisms of the Golomb space of $\insZ$ are the identity and the multiplication by $-1$. We also show that the Golomb space of any Dedekind domain contained in the algebraic closure of $\insQ$ is not homeomorphic to the Golomb space of $\insZ$.
\end{abstract}

\maketitle

\section{Introduction}
Let $R$ be an integral domain. The \emph{Golomb topology} of $R$ is the topology on $R^\nz:=R\setminus\{0\}$ generated by the coprime cosets; we denote by $G(R)$ the space $R^\nz$ endowed with the this topology, and call it the \emph{Golomb space} of $R$. The Golomb topology on the set $\insZ^+$ of positive integer was introduced by Brown \cite{brown-golomb} and subsequently studied by Golomb \cite{golomb-connectedtop,golomb-aritmtop}. On general domains, the Golomb topology was considered alongside several other coset topologies (see for example \cite{knopf}), and was shown to provide a way to generalize Furstenberg's ``topological'' proof of the infinitude of primes in a more general context \cite{furstenberg,clark-euclidean}. See \cite[Section 4]{clark-golomb} for a more detailed historical overview of the subject.

Two recent articles have shed more light on the Golomb topology. The first one, due to Banakh, Mioduszewski and Turek \cite{bmt-golomb}, deals with the ``classical'' subject of the Golomb topology on $\insZ^+$, with the explicit goal of deciding if this space is \emph{rigid}, i.e., if it does not admit any self-homeomorphism; in particular, they show that any self-homeomorphism of this space fixes 1 \cite[Theorem 5.1]{bmt-golomb}. The second one, due to Clark, Lebowitz-Lockard and Pollack \cite{clark-golomb}, studies Golomb spaces on general domains, in particular when the ring $R$ is a Dedekind domain with infinitely many maximal ideals: under this hypothesis, they show that $G(R)$ is a Hausdorff space that is not regular, and that it is a connected space that is totally disconnected at each of its points. They also raise the \emph{isomorphism problem}: can two nonisomorphic Dedekind domains with infinitely many maximal ideals (or, more generally, two integral domains with zero Jacobson radical) have homeomorphic Golomb spaces? As a first step in this study, they prove that any homeomorphism of Golomb topologies sends units to units \cite[Theorem 13]{clark-golomb}, and thus that two domains with a different number of units have nonhomeomorphic Golomb spaces. We note that the rigidity problem and the isomorphism problem can be unified into a single question:

\smallskip

\textbf{Problem.} Let $R,S$ be two Dedekind domains with infinitely many maximal ideals, and let $h:G(R)\longrightarrow G(S)$ be a homeomorphism. Is it true that there is a ring isomorphism $\sigma:R\longrightarrow S$ and a unit $u\in S$ such that $h(x)=u\sigma(x)$ for all $x\in R$?

\smallskip

In this paper, we show that the only self-homeomorphisms of the Golomb space $G(\insZ)$ are the identity and the multiplication by $-1$ (Theorem \ref{teor:selfOmefZ}), and that if $R$ is a Dedekind domain contained in the algebraic closure $\overline{\insQ}$ of $\insQ$ such that $G(\insZ)\simeq G(R)$ then $R=\insZ$, thus giving a complete answer to the above question for $R=S=\insZ$ and a partial answer for $R=\insZ$. While the method we use works best for the ring of integers, we work as much as possible in a greater generality: the main restrictions we have to put (especially in Sections \ref{sect:clos} and \ref{sect:Yk}) are that the class group of Dedekind domain we consider must be torsion, and that some quotients of the group of units of $R/P^n$ are cyclic.

The structure of the paper is as follows. In Section \ref{sect:radical}, we generalize \cite[Lemma 5.6]{bmt-golomb} to the case of general Dedekind domains; in particular, we show that the partially ordered set $\insV(R)$ formed by the subsets of $\Max(R)$ that can be written as $V(x):=\{M\in\Max(R)\mid x\in M\}$ for some $x\in R^\nz$ is a topological invariant of the Golomb topology (Proposition \ref{prop:isoV}). Through this result, we prove that if $G(R)\simeq G(S)$ then the class groups of $R$ and $S$ are either both torsion or both non-torsion (Theorem \ref{teor:torsion}) and, if they are torsion, then a homeomorphism between $G(R)$ and $G(S)$ sends prime ideals to prime ideals and radical ideals to radical ideals.

In Section \ref{sect:Ptop}, given a prime ideal $P$ of $R$, we show how to construct from the Golomb topology a new topology on $R\setminus P$ (the \emph{$P$-topology}), which allows to concentrate on the cosets in the form $a+P^n$. Section \ref{sect:HnP} collects some results about the groups $H_n(P):=U(R/P^n)/\pi_n(U(R))$.

In Section \ref{sect:clos}, we study the sets $\pow{a}:=\{ua^n\mid u\in U(R)\}$ of powers of the elements $a\in R\setminus P$, and in particular their closure in the $P$-topology. We relate this closure to the cyclic subgroups of the groups $H_n(P)$; in particular, we show that under some hypothesis (among which that $R$ has torsion class group and that the $H_n(P)$ are cyclic) the closure of $\pow{a}$ is characterized by the index of the subgroup generated by $a$ in $H_n(P)$ for large $n$. Restricting to almost prime elements (i.e., irreducible elements generating a primary ideal) we show that there is a bijective correspondence between these closures and a set of integers depending on the cardinality of the $H_n(P)$ (Theorem \ref{teor:corresp-Theta}), and that this structure is preserved under homeomorphisms of Golomb spaces (Propositions \ref{prop:omef-XP} and \ref{prop:DP}). In Section \ref{sect:Yk}, we explicit enough of this correspondence to characterize completely the self-homeomorphisms of $G(\insZ)$ (Theorem \ref{teor:selfOmefZ}).

\section{Radical and prime ideals}\label{sect:radical}
Throughout the paper, $R$ will a Dedekind domain, i.e., a commutative unitary ring with no zerodivisors such that every ideal can be written as a product of prime ideals; equivalently, such that $R$ is a one-dimensional Noetherian integrally closed domain. We denote by $\Max(R)$ the set of maximal elements of $R$; if $x\in R\setminus\{0\}$, we set $V(x):=\{M\in\Max(R)\mid x\in M\}$. We denote by $U(R)$ the set of units of $R$, and by $\rad(I)$ the radical of the ideal $I$.

The \emph{class group} $\Cl(R)$ of $R$ is the abelian group of the fractional ideals of $R$ modulo principal ideals, with the operation being the multiplication between ideals. The class group of $R$ is torsion if and only if every prime ideal contains a principal primary ideal \cite[Proposition 3.1]{gilmer_qr}.

For every subset $I\subseteq R$, we set $I^\nz:=I\setminus\{0\}$. The \emph{Golomb space} $G(R)$ is the topological space on $R^\nz$ generated by the cosets $a+I$ such that $\langle a,I\rangle=R$. (Note that if $\langle a,I\rangle=R$ then $0\notin a+I$ and thus $a+I\subseteq R^\nz$.) For $X\subseteq R^\nz$, we denote by $\overline{X}$ the closure of $X$ is the Golomb topology.

The closure of the coprime cosets can be completely described.
\begin{lemma}\label{lemma:chiusura}
\cite[Lemma 15]{clark-golomb} Let $R$ be a Dedekind domain, let $I$ be a nonzero ideal of $R$ and let $x\in R^\nz$ be such that $\langle x,I\rangle=R$. Let $I=P_1^{e_1}\cdots P_n^{e_n}$ be the factorization of $I$. Then,
\begin{equation*}
\overline{x+I}=\left(\bigcap_{i=1}^nP_i\cup(x+P_i^{e_i})\right)^\nz.
\end{equation*}
\end{lemma}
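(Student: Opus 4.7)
The plan is to unpack the definition of closure directly. A point $y$ lies in $\overline{x+I}$ exactly when $y\in R^\nz$ and every basic open neighborhood of $y$ meets $x+I$; such a neighborhood is of the form $y+J$ with $\langle y,J\rangle=R$, and the intersection $(y+J)\cap(x+I)$ is nonempty if and only if $y-x\in I+J$. So the task reduces to characterizing those $y\in R^\nz$ for which $y-x\in I+J$ holds for \emph{every} ideal $J$ with $\langle y,J\rangle=R$.

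Next, I would exploit the Dedekind structure of $R$: $I+J$ is the gcd of $I$ and $J$, so its $P$-adic exponent at each prime $P$ is $\min(v_P(I),v_P(J))$. At primes $P$ not dividing $I$ the exponent of $I+J$ is $0$, hence no constraint on $y-x$ arises. At each $P_i$ dividing $I$, the coprimality condition $\langle y,J\rangle=R$ splits into two cases. If $y\in P_i$, then $\langle y,J\rangle=R$ forces $J\not\subseteq P_i$, so $v_{P_i}(I+J)=0$ and the membership $y-x\in I+J$ is automatic at $P_i$. If $y\notin P_i$, then for every $k\geq 0$ the ideal $J=P_i^k$ is itself coprime with $y$; letting $k\geq e_i$ forces $v_{P_i}(y-x)\geq e_i$, i.e., $y\in x+P_i^{e_i}$.

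Together these two regimes yield the condition $y\in P_i\cup(x+P_i^{e_i})$ for every $i$, which is the claimed formula (intersected with $R^\nz$). For the converse direction, given any $y$ in this intersection and any $J$ with $\langle y,J\rangle=R$, one checks $v_{P_i}(y-x)\geq\min(e_i,v_{P_i}(J))$ at each $P_i$ by the same dichotomy, and this is enough to conclude $y-x\in I+J$. I expect the main obstacle to be the bookkeeping between the coprimality relation $\langle y,J\rangle=R$ and the valuations of $I+J$ at each prime simultaneously; once one isolates the two regimes $y\in P_i$ and $y\notin P_i$, what remains is standard Dedekind arithmetic.
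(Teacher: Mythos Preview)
The paper does not prove this lemma at all: it is stated with a citation to \cite[Lemma 15]{clark-golomb} and used as a black box. So there is no ``paper's own proof'' to compare against.

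Your direct argument is correct. The characterization of closure via basic neighborhoods $y+J$ with $\langle y,J\rangle=R$ is valid (any basic open set containing $y$ can be rewritten as $y+J$), the equivalence $(y+J)\cap(x+I)\neq\emptyset\iff y-x\in I+J$ is immediate, and the prime-by-prime dichotomy works as you describe. In the forward direction, when $y\notin P_i$ you may simply take $J=P_i^{e_i}$: then $I+P_i^{e_i}=P_i^{e_i}$ (since $I\subseteq P_i^{e_i}$ and at every other prime $P_j$ the ideal $P_i^{e_i}$ has valuation $0$), giving $y-x\in P_i^{e_i}$ directly. In the converse, your valuation check $v_{P_i}(y-x)\geq\min(e_i,v_{P_i}(J))$ is exactly what is needed, and the case $y\in P_i$ forces $v_{P_i}(J)=0$ from $\langle y,J\rangle=R$. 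The bookkeeping you anticipated as the main obstacle is indeed the only content, and you have handled it cleanly.
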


In particular, we immediately obtain the following.
\begin{cor}\label{cor:chiusprod}
Let $R$ be a Dedekind domain, and let $I,J$ be coprime ideals. For every $x$ such that $\langle x,I\rangle=\langle x,J\rangle=R$, we have $\overline{x+IJ}=\overline{x+I}\cap\overline{x+J}$.
\end{cor}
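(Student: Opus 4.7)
The plan is to derive the identity directly from Lemma \ref{lemma:chiusura}. First I would verify that the hypothesis of that lemma holds for the pair $(x, IJ)$: since $I$ and $J$ are coprime, no maximal ideal contains both, so the maximal ideals containing $IJ$ are exactly those containing $I$ together with those containing $J$. Because $\langle x,I\rangle=\langle x,J\rangle=R$, the element $x$ lies in none of these maximal ideals, and therefore $\langle x,IJ\rangle$ is not contained in any maximal ideal, forcing $\langle x,IJ\rangle=R$.

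Next I would write down the prime factorizations $I=P_1^{e_1}\cdots P_n^{e_n}$ and $J=Q_1^{f_1}\cdots Q_m^{f_m}$. By coprimality, the sets $\{P_i\}$ and $\{Q_j\}$ are disjoint, so the prime factorization of $IJ$ is just the juxtaposition $P_1^{e_1}\cdots P_n^{e_n}Q_1^{f_1}\cdots Q_m^{f_m}$. Applying Lemma \ref{lemma:chiusura} three times gives
\begin{equation*}
\overline{x+I}=\Bigl(\bigcap_{i=1}^n P_i\cup(x+P_i^{e_i})\Bigr)^\nz,\quad \overline{x+J}=\Bigl(\bigcap_{j=1}^m Q_j\cup(x+Q_j^{f_j})\Bigr)^\nz,
\end{equation*}
and an analogous formula for $\overline{x+IJ}$ ranging over all the $P_i$ and $Q_j$. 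Intersecting the first two expressions produces precisely the third, since the removal of $0$ commutes with finite intersections.

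I do not anticipate a real obstacle: once the factorization of $IJ$ is identified and the coprimality of $x$ with $IJ$ is checked, the corollary is a formal consequence of the closure formula. The content of the statement is conceptual, showing that in the Golomb topology the closure operator is compatible with the decomposition of ideals into coprime factors; the proof itself is just a direct application of the preceding lemma.
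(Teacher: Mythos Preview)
Your proposal is correct and matches the paper's approach exactly: the paper states the corollary with no proof beyond ``In particular, we immediately obtain the following,'' and what you have written is precisely the intended unpacking of that remark via Lemma~\ref{lemma:chiusura}.
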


The purpose of this section is to generalize the results obtained in \cite[Section 5]{bmt-golomb} on the relationship between the Golomb topology and the prime divisors of an element $x\in R^\nz$. Following the methods used therein, we define $\filtro_x$ as the set of all $F\subseteq R^\nz$ such that there are a neighborhood $U_x$ of $x$ and a neighborhood $U_1$ of $1$ such that $\overline{U_x}\cap\overline{U_1}\subseteq F$. 

Part \ref{prop:Fx:prime} of the following proposition corresponds to \cite[Lemma 5.5(a)]{bmt-golomb}, while part \ref{prop:Fx:cont} corresponds to \cite[Lemma 5.6]{bmt-golomb}.
\begin{prop}\label{prop:Fx}
Let $R$ be a Dedekind domain. Let $x,y\in R^\nz$ and let $M\in\Max(R)$. Then, the following hold.
\begin{enumerate}[(a)]
\item\label{prop:Fx:filtro} $\filtro_x$ is a filter.
\item\label{prop:Fx:prime} $M^\nz\in\filtro_x$ if and only if $x\notin M$.
\item\label{prop:Fx:cont} $\filtro_x\subseteq\filtro_y$ if and only if $V(y)\subseteq V(x)$.
\end{enumerate}
\end{prop}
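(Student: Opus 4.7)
My plan is to prove the three parts in the stated order, with (b) providing the key information for the easy direction of (c). The main tool throughout is Lemma~\ref{lemma:chiusura}, which allows a prime-by-prime analysis of closures, combined with routine Chinese Remainder Theorem arguments.

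For \textbf{(a)} I would verify the filter axioms directly from the definition. Closure under supersets is immediate, and closure under finite intersections follows from $\overline{A\cap B}\subseteq\overline{A}\cap\overline{B}$: if $F_i$ is certified by the pair $(U_x^i,U_1^i)$ for $i=1,2$, then $(U_x^1\cap U_x^2,\,U_1^1\cap U_1^2)$ certifies $F_1\cap F_2$. To rule out $\emptyset\in\filtro_x$, I would refine to basic coprime cosets $x+I\subseteq U_x$ and $1+J\subseteq U_1$ and show $\overline{x+I}\cap\overline{1+J}\neq\emptyset$; Lemma~\ref{lemma:chiusura} tells us that $P^\nz$ sits inside $\overline{x+I}$ for each $P\mid I$, and CRT produces an element lying simultaneously in one such $P$ and in the residue classes $1+Q^{f_Q}$ for the primes $Q\mid J$ coprime to $P$.

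For \textbf{(b)} in the direction $x\notin M\Rightarrow M^\nz\in\filtro_x$, I would take $U_x=x+M^n$ and $U_1=1+M^n$ with $n>v_M(x-1)$. These are coprime cosets because $x\notin M$ and $1\notin M$, and Lemma~\ref{lemma:chiusura} gives
\[
\overline{x+M^n}\cap\overline{1+M^n}\;=\;M^\nz\cup\bigl((x+M^n)\cap(1+M^n)\bigr),
\]
where the choice of $n$ forces the second summand to be empty, yielding $\overline{U_x}\cap\overline{U_1}=M^\nz$. Conversely, suppose $x\in M$ and $(x+I,\,1+J)$ is any basic witness. The coprimality $\langle x,I\rangle=R$ combined with $x\in M$ forces $M\nmid I$, so I can construct by CRT an element $a$ satisfying $a\equiv x\pmod{P^{e_P}}$ for each $P\mid I$, $a\equiv 1\pmod{Q^{f_Q}}$ for each $Q\mid J$, and $a\equiv 1\pmod M$. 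This $a$ lies in $\overline{x+I}\cap\overline{1+J}$ by Lemma~\ref{lemma:chiusura} but satisfies $a\equiv 1\not\equiv 0\pmod M$, contradicting $\overline{U_x}\cap\overline{U_1}\subseteq M^\nz$.

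For \textbf{(c)} the ``only if'' direction is formal from (b): any $M\in V(y)\setminus V(x)$ certifies $M^\nz\in\filtro_x\setminus\filtro_y$, so $\filtro_x\not\subseteq\filtro_y$. For the ``if'' direction, assume $V(y)\subseteq V(x)$ and let $F\in\filtro_x$ be witnessed by a basic $(x+I,\,1+J)$. A crucial preliminary observation is that $\langle x,I\rangle=R$ forces $x\notin P$ for every $P\mid I$, so by the hypothesis $y\notin P$ as well; in particular $y+I$ is automatically a coprime coset. I would then build witnesses $V_y=y+I_0$ and $V_1'=1+I_1$ with $I_0,I_1$ divisible by sufficiently high powers of the primes of $I$ and incorporating the primes of $J$, so that a prime-by-prime analysis via Lemma~\ref{lemma:chiusura} and Corollary~\ref{cor:chiusprod} forces each $a\in\overline{V_y}\cap\overline{V_1'}$ into the ``$a\in P$'' branch at every $P\mid I$---this leverages the Krull intersection $\bigcap_n P^n=0$ valid in a Dedekind domain to ensure $y\not\equiv 1\pmod{P^n}$ for $n$ large enough---while recovering the $\overline{1+J}$-condition at the primes of $J$. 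The main obstacle is this last engineering step: designing $I_0$ and $I_1$ so that the combined closure intersection actually collapses into $\overline{x+I}\cap\overline{1+J}$, with special care needed when a prime of $J$ happens to contain $y$, in which case $V_y$ must be modified to remain a coprime coset.
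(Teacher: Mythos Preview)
Your approach is the paper's, and the sketch for (a)--(b) is sound (the paper itself simply defers to \cite{clark-golomb} for both, so you are filling in what it quotes).  One small wrinkle in your converse of (b): as written, your CRT system can be inconsistent at a prime $P$ dividing both $I$ and $J$, since you demand $a\equiv x\pmod{P^{e_P}}$ and $a\equiv 1\pmod{P^{f_P}}$ simultaneously.  This is harmless---just set $a\equiv 0\pmod{P}$ at such primes, which lands $a$ in both closures via the ``$P$'' branch of Lemma~\ref{lemma:chiusura}---but it should be said.

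For the ``if'' direction of (c), the obstacle you flag dissolves once you adopt the paper's normalization.  First shrink $J$ so that $J=IJ'$ with $\langle I,J'\rangle=R$ (this only makes the witnessing neighborhoods smaller).  Then take $I_0=I':=\prod_iP_i^{n_ie_i}$ built \emph{only} from the primes $P_i$ of $I$; since $\langle x,I\rangle=R$ forces $x\notin P_i$, the hypothesis $V(y)\subseteq V(x)$ gives $y\notin P_i$, so $y+I'$ is automatically a coprime coset and no ad hoc modification is needed when a prime of $J$ contains $y$.  The primes of $J'$ go only into $I_1=I'J'$, and then
\[
\overline{y+I'}\cap\overline{1+I'}\;=\;\Bigl(\bigcap_iP_i\Bigr)^\nz\;\subseteq\;\overline{x+I}\cap\overline{1+I},
\]
so intersecting with $\overline{1+J'}$ gives $\overline{y+I'}\cap\overline{1+I'J'}\subseteq F$.
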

\begin{proof}
\ref{prop:Fx:filtro} By the proof of \cite[Theorem 8(a)]{clark-golomb} (and the discussion in Section 3 therein), for every open sets $V_1,\ldots,V_n$ the intersection $\overline{V_1}\cap\cdots\cap\overline{V_n}$ is nonempty; the claim follows.

\ref{prop:Fx:prime} is a direct consequence of \cite[Lemma 17]{clark-golomb}, applied with $y=1$.

\ref{prop:Fx:cont} Suppose $\filtro_x\subseteq\filtro_y$, and let $P\in V(y)$. Then, $y\in P$, so by point \ref{prop:Fx:prime} $P\notin\filtro_y$; hence, $P\notin\filtro_x$ and thus again $x\in P$, i.e., $P\in V_x$.

Conversely, suppose $V(y)\subseteq V(x)$. Let $F\in\filtro_x$; then, there are ideals $I,J$ of $R$ such that $\langle x,I\rangle=R$ and such that $\overline{x+I}\cap\overline{1+J}\subseteq F$. Without loss of generality, we can suppose that $J\subseteq I$ and that $J=IJ'$ for some $J'$ such that $\langle I,J'\rangle=R$. Let $I=\prod_iP_i^{e_i}$ be the prime decomposition of $I$; by Corollary \ref{cor:chiusprod}, we have
\begin{align*}
\overline{x+I}\cap\overline{1+J}= & \overline{x+I}\cap\overline{1+I}\cap\overline{1+J'}=\\
=& \bigcap_i\overline{x+P_i^{e_i}}\cap\overline{1+P_i^{e_i}}\cap\overline{1+J'}.
\end{align*}
For each $i$, let $n_i$ be an integer such that $y-1\notin P_i^{n_ie_i}$. Then, by Lemma \ref{lemma:chiusura},
\begin{equation*}
\overline{y+P_i^{n_ie_i}}\cap\overline{1+P_i^{n_ie_i}}=((y+P_i^{n_ie_i})\cup P_i)^\nz\cap((1+P_i^{n_ie_i})\cup P_i)^\nz=P_i^\nz.
\end{equation*}
Let $I':=\prod_iP_i^{e_in_i}$: then,
\begin{align*}
\overline{y+I'}\cap\overline{1+I'}=& \bigcap_i\overline{y+P_i^{n_ie_i}}\cap\overline{1+P_i^{n_ie_i}}=\left(\bigcap_iP_i\right)^\nz\subseteq\\
\subseteq &\bigcap_i\overline{x+P_i^{e_i}}\cap\overline{1+P_i^{e_i}}=\\
= & \overline{x+I'}\cap\overline{1+I'}\subseteq\overline{x+I}\cap\overline{1+I},
\end{align*}
and thus
\begin{equation*}
\overline{y+I'}\cap\overline{1+I'J'}=\overline{y+I'}\cap\overline{1+I'}\cap\overline{1+J'}\subseteq\overline{x+I}\cap\overline{1+I}\cap\overline{1+J'}\subseteq F.
\end{equation*}
Since the radical of $I$ and $I'$ is the same and $\langle x,I\rangle=R$, also $\langle x,I'\rangle=R$; since $V(y)\subseteq V(x)$, we have $\langle y,I'\rangle=R$, and thus $y+I'$ is an open neighborhood of $y$. Hence, $F\in\filtro_y$ and thus $\filtro_x\subseteq\filtro_y$, as claimed.
\end{proof}

Let $R$ be a Dedekind domain. We consider two sets associated to $R$:
\begin{equation*}
\insfiltri(R):=\{\filtro_x\mid x\in R^\nz\}
\end{equation*}
and
\begin{equation*}
\insV(R):=\{V(x)\mid x\in R^\nz\}.
\end{equation*}
The previous proposition establishes a relation between them.
\begin{prop}\label{prop:filtriV}
Let $R$ be a Dedekind domain. The map
\begin{equation*}
\begin{aligned}
\Psi\colon\insfiltri(R) & \longrightarrow\insV(R),\\
\filtro_x & \longmapsto V(x)
\end{aligned}
\end{equation*}
is well-defined and an anti-isomorphism (when $\insfiltri(R)$ and $\insV(R)$ are endowed with the containment order).
\end{prop}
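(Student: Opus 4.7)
The whole proposition is essentially a repackaging of Proposition \ref{prop:Fx}\ref{prop:Fx:cont}, so my plan is to reduce every claim to that statement and to the tautological definition of $\insV(R)$.

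First, I would handle well-definedness and injectivity together. For this, I would show the biconditional $\filtro_x = \filtro_y \iff V(x) = V(y)$. By Proposition \ref{prop:Fx}\ref{prop:Fx:cont}, we have $\filtro_x \subseteq \filtro_y$ if and only if $V(y) \subseteq V(x)$, and symmetrically $\filtro_y \subseteq \filtro_x$ if and only if $V(x) \subseteq V(y)$. Combining these two equivalences immediately yields that $\filtro_x = \filtro_y$ precisely when $V(x) = V(y)$. The $(\Rightarrow)$ direction of this equivalence says $\Psi$ is well-defined; the $(\Leftarrow)$ direction says it is injective.

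Next, surjectivity is immediate: by the definition of $\insV(R)$, every element is of the form $V(x)$ for some $x \in R^\nz$, and $\Psi(\filtro_x) = V(x)$ by construction.

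Finally, for the order-reversing property, I would again invoke Proposition \ref{prop:Fx}\ref{prop:Fx:cont} directly: $\Psi(\filtro_x) \supseteq \Psi(\filtro_y)$, i.e.\ $V(x) \supseteq V(y)$, holds if and only if $\filtro_x \subseteq \filtro_y$. Together with the bijectivity just established, this shows $\Psi$ is an order-reversing bijection, hence an anti-isomorphism of posets. There is no real obstacle here; the substantive work was done in Proposition \ref{prop:Fx}, and this proposition is just the clean categorical restatement of point \ref{prop:Fx:cont} of that result.
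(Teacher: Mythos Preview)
Your proof is correct and follows exactly the paper's approach: the paper's own proof simply states that Proposition \ref{prop:Fx}\ref{prop:Fx:cont} guarantees well-definedness, injectivity and the order-reversing property, while surjectivity is obvious. You have merely spelled out these inferences in slightly more detail.
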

\begin{proof}
Proposition \ref{prop:Fx}\ref{prop:Fx:cont} guarantees that $\Psi$ is well-defined, injective and order-reversing, while the surjectivity is obvious.
\end{proof}

\begin{prop}\label{prop:isoV}
Let $R,S$ be Dedekind domains and $h:G(R)\longrightarrow G(S)$ be a homeomorphism. Then, the following hold.
\begin{enumerate}[(a)]
\item\label{prop:isoV:filtri} If $h(1)=1$, then $h(\filtro_x)=\filtro_{h(x)}$ for every $x\in R^\nz$.
\item\label{prop:isoV:V} $h$ induces an order isomorphism
\begin{equation*}
\begin{aligned}
\overline{h}\colon\insV(R) & \longrightarrow\insV(S),\\
V(x) & \longmapsto V(h(x)).
\end{aligned}
\end{equation*}
\end{enumerate}
\end{prop}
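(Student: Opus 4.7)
The plan is to prove (a) by direct topological transport under $h$, and then to reduce (b) to (a) by pre-multiplying $h$ by the unit $h(1)^{-1}$.

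For part (a), every condition defining $\filtro_x$ is purely topological: $F\in\filtro_x$ exactly when there exist open neighborhoods $U_x$ of $x$ and $U_1$ of $1$ with $\overline{U_x}\cap\overline{U_1}\subseteq F$. Since $h$ is a homeomorphism, $h(U_x)$ and $h(U_1)$ are open neighborhoods of $h(x)$ and $h(1)=1$ respectively, $h$ commutes with closure, and $h$ is bijective so it commutes with intersection. Thus $F\in\filtro_x$ entails $h(F)\in\filtro_{h(x)}$, and the converse follows symmetrically by applying the same argument to $h^{-1}$. Hence $h(\filtro_x)=\filtro_{h(x)}$ as required.

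For part (b), the only difficulty is that a priori $h(1)\neq 1$, so (a) does not apply directly. I will remove this obstruction using a second ingredient: by \cite[Theorem 13]{clark-golomb}, $h$ sends units to units, so $u:=h(1)\in U(S)$. Moreover, multiplication by a unit of $S$ is a self-homeomorphism of $G(S)$, since for any coprime coset it satisfies $u(a+I)=ua+I$, and $\langle a,I\rangle=R$ iff $\langle ua,I\rangle=R$. Composing, $\tilde h:=u^{-1}\cdot h$ is a homeomorphism $G(R)\to G(S)$ with $\tilde h(1)=1$, to which part (a) applies. This gives $\tilde h(\filtro_x)=\filtro_{\tilde h(x)}$ for all $x\in R^\nz$, so $\tilde h$ induces a containment-preserving bijection $\filtro_x\mapsto\filtro_{\tilde h(x)}$ between $\insfiltri(R)$ and $\insfiltri(S)$. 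Conjugating by the anti-isomorphisms of Proposition \ref{prop:filtriV} on either side converts this into an order-preserving bijection $V(x)\mapsto V(\tilde h(x))$ between $\insV(R)$ and $\insV(S)$. Finally, since $u^{-1}\in U(S)$ lies in no maximal ideal of $S$, $V(\tilde h(x))=V(u^{-1}h(x))=V(h(x))$, yielding $\overline h$.

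I anticipate no serious obstacle: part (a) is a routine rewriting of the topological definition under a homeomorphism, and the reduction in (b) only needs the already-cited preservation of units together with the elementary observation that $G(S)$ is homogeneous under its unit action. The order bookkeeping (an anti-iso, then an iso, then an anti-iso) produces an order-preserving map automatically.
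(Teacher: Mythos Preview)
Your proof is correct and follows essentially the same approach as the paper's: part (a) is proved by direct topological transport of the definition of $\filtro_x$, and part (b) is reduced to (a) by composing $h$ with multiplication by the unit $h(1)^{-1}$ (which fixes $1$), then invoking the anti-isomorphism $\Psi$ of Proposition~\ref{prop:filtriV} together with the observation that multiplying by a unit does not change $V(\cdot)$. The only cosmetic difference is that the paper factors $h=\psi_u\circ h'$ and verifies the claim for each factor separately, whereas you handle this in a single step via $V(u^{-1}h(x))=V(h(x))$; the content is identical.
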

\begin{proof}
\ref{prop:isoV:filtri} Since $h$ is a homeomorphism and $h(1)=1$, $h$ sends neighborhoods of $x$ into neighborhoods of $h(x)$, and neighborhoods of $1$ into neighborhoods of $1$, and analogously for their closures. The claim follows by the definition of $\filtro_x$.

\ref{prop:isoV:V} For every unit $v$ of $S$, let $\psi_v:G(S)\longrightarrow G(S)$ be the multiplication by $v$. Clearly, $\psi_v$ is a self-homeomorphism of $G(S)$.

Let $u:=h(1)$. By \cite[Theorem 13]{clark-golomb}, $u$ is a unit of $S$, and thus $\psi_u$ is a self-homeomorphism of $G(S)$. Then, $h=\psi_u\circ\psi_{u^{-1}}\circ h$; setting $h':=\psi_{u^{-1}}\circ h$, it is enough to show the claim separately for $\psi_u$ and for $h'$.

For every $y\in S^\nz$, $V(uy)=V(y)$; hence, the map
\begin{equation*}
\begin{aligned}
\widetilde{\psi_u}\colon\insfiltri(S) & \longrightarrow\insfiltri(S),\\
\filtro_x & \longmapsto \filtro_{ux}
\end{aligned}
\end{equation*}
is the identity, and in particular it is an order isomorphism. Then, if $\Psi$ is the map of Proposition \ref{prop:filtriV}, we have that $\Psi\circ\widetilde{\psi_u}\circ\Psi^{-1}$ is an order-isomorphism of $\insV(S)$ with itself; unraveling the definition we see that $\overline{\psi_u}=\Psi\circ\widetilde{\psi_u}\circ\Psi^{-1}$, and the claim is proved.

Consider now $h'$. Then, $h'(1)=u^{-1}h(1)=1$. By the previous point, $h'(\filtro_x)=\filtro_{h'(x)}$; hence, by Proposition \ref{prop:Fx}\ref{prop:Fx:cont}, the map
\begin{equation*}
\begin{aligned}
\widetilde{h'}\colon\insfiltri(R) & \longrightarrow\insfiltri(R),\\
\filtro_x & \longmapsto \filtro_{h'(x)}
\end{aligned}
\end{equation*}
is well-defined and an order-isomorphism. As before, we see that $\overline{h'}=\Psi\circ\widetilde{h'}\circ\Psi^{-1}$ and that the right hand side is an order-isomorphism between $\insV(R)$ and $\insV(S)$, and the claim is proved.
\end{proof}

Since a Dedekind domain is locally finite, $\insV(R)$ is always a subset of $\mathcal{P}_{\mathrm{fin}}(\Max(R))$, the set of finite subsets of $\Max(R)$. When the class group of $R$ is torsion, we have equality: indeed, if $P$ is a maximal ideal then $P^k$ is principal for some $k$, and thus there is an $x_P$ (for example, the generator of $P^k$) such that $V(x_P)=\{P\}$. Hence, $\{P_1,\ldots,P_n\}$ is just equal to $V(x_{P_1}\cdots x_{P_n})$. This is actually an equivalence: indeed, if the class of $P$ in $\Cl(R)$ is not torsion then $V(x)\neq\{P\}$ for every $x\in R^\nz$. We can upgrade this difference.
\begin{teor}\label{teor:torsion}
Let $R,S$ be Dedekind domains such that $G(R)$ and $G(S)$ are isomorphic. Then, the class group of $R$ is torsion if and only if the class group of $S$ is torsion.
\end{teor}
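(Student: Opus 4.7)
The plan is to apply Proposition~\ref{prop:isoV}\ref{prop:isoV:V}: any homeomorphism $h\colon G(R)\to G(S)$ induces an order isomorphism $\overline h\colon\insV(R)\to\insV(S)$, so it suffices to detect the torsion-ness of $\Cl(R)$ from the order structure of $\insV(R)$. The candidate characterization, suggested by the paragraph immediately preceding the statement, is that $\Cl(R)$ is torsion if and only if, for every $V\in\insV(R)$, the principal down-set $\downarrow V := \{W\in\insV(R) : W\leq V\}$ is a finite Boolean lattice. The forward direction of this characterization is immediate, since in the torsion case $\insV(R) = \mathcal{P}_{\mathrm{fin}}(\Max(R))$, whence $\downarrow V = \mathcal{P}(V)$ is Boolean.

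For the converse direction, assume $\Cl(R)$ is non-torsion, fix a prime $P$ with $[P]$ of infinite order in $\Cl(R)$, and produce a $V$ for which $\downarrow V$ is not Boolean. Using prime avoidance together with the fact that $\Cl(R)$ is generated by the classes of its primes, one constructs four distinct primes $P_1,P_2,Q_1,Q_2$ with $[P_1]=[P_2]=:g$ of infinite order and $[Q_1]=[Q_2]=-g$, so that each of the four pairs $\{P_i,Q_j\}$ is an atom of $\insV(R)$ (witnessed by $P_iQ_j$ being principal, while $\{P_i\},\{Q_j\}\notin\insV(R)$ because $\pm g$ is non-torsion). Setting $V:=\{P_1,P_2,Q_1,Q_2\}$, the ideal $P_1P_2Q_1Q_2$ is principal (its class is $2g-2g=0$), so $V\in\insV(R)$; a direct enumeration yields $|\downarrow V|=10$, consisting of $\emptyset$, the four atom pairs $\{P_i,Q_j\}$, the four triples of the form $V\setminus\{P_i\}$ or $V\setminus\{Q_j\}$, and $V$ itself. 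Since $10$ is not a power of two, $\downarrow V$ cannot be Boolean, which finishes the characterization.

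The main technical hurdle is the construction of this ``four-prime grid'', since finding multiple primes in each of the classes $\pm g$ is not automatic for an arbitrary Dedekind domain. In favorable situations (e.g.\ when every class of $\Cl(R)$ is realized by infinitely many primes) this is routine, but in pathological Dedekind domains the argument will have to be refined, presumably by working instead with atoms of the more general form $\{P,Q_1,\dots,Q_k\}$ (whose prime classes sum to zero) and arguing non-Boolean-ness from a pair of such atoms sharing the common prime $P$; this flexibility in the construction is what I expect to be the most delicate point of the proof.
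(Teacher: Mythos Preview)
Your high-level strategy coincides with the paper's: isolate an order-theoretic property of $\insV(R)$ that detects whether $\Cl(R)$ is torsion, then invoke Proposition~\ref{prop:isoV}\ref{prop:isoV:V}. The forward implication (torsion $\Rightarrow$ every $\downarrow V$ Boolean) is correct and immediate.

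The genuine gap is in the non-torsion direction. Your four-prime grid requires two primes in the class $g=[P]$ and two in the class $-g$, but in a general Dedekind domain there is no reason for a prescribed class to contain any prime at all; the fact that prime classes generate $\Cl(R)$ only tells you that $-g$ is a $\insZ$-linear combination of prime classes, not that it equals one. Prime avoidance lets you pick \emph{elements} outside finitely many primes, not primes inside a given class, so it does not supply the missing ingredient. Your fallback to ``a pair of atoms sharing $P$'' is also insufficient as stated: if $V_1=\{P,Q\}$ and $V_2=\{P,Q'\}$ are distinct atoms with $V_1\cap V_2=\{P\}$, one may well have $\downarrow(V_1\cup V_2)=\{\emptyset,V_1,V_2,V_1\cup V_2\}$, which \emph{is} Boolean (this already occurs, for instance, in a domain with $\Cl(R)\cong\insZ$ whose primes realize only the classes $2$ and $-3$). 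One must push further, and your outline does not indicate how.

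The paper sidesteps this by never demanding primes in a specified class. It picks $x,y\in P\setminus P^2$ with coprime cofactors $A,B$ (this is genuine prime avoidance, applied to elements of $P$), then an \emph{ideal} $H$ in the class of $P$ coprime to $P$, $A$, $B$ (every ideal class contains an ideal coprime to any given nonzero ideal), sets $zR=HA$, $wR=HB$, and produces two distinct finite families of atoms of $\insV(S)$ whose join is the common value $V(xw)=V(yz)$. So the paper's obstruction---two different finite sets of atoms with the same join---relies only on the availability of \emph{ideals} in a given class avoiding a finite set of primes, which is automatic, rather than on \emph{primes} in that class, which is what your construction needs and cannot in general have.
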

\begin{proof}
Suppose that the class group of $R$ is torsion while the class group of $S$ is not, and let $\mathcal{M}(R)$ (respectively, $\mathcal{M}(S)$) be the set of maximal elements of $\insV(R)$ (resp., $\insV(S)$).

Every element of $\mathcal{M}(R)$ is a singleton. Therefore, if $\Delta\subseteq\mathcal{M}(R)$ is finite, say $\Delta=\{\{P_1\},\ldots,\{P_n\}\}$, then $\inf\Delta$ exists and is equal to $\{P_1,\ldots,P_n\}$. Therefore, $\inf\Delta\neq\inf\Lambda$ for every finite $\Delta\neq\Lambda$.

We claim that this does not hold in $\insV(S)$. Indeed, since the class group of $S$ is not torsion there is a maximal ideal $P$ such that no power of $P$ is principal. Let $x\in P\setminus P^2$: then, $xR=PA$ for some ideal $A$ coprime with $P$. By prime avoidance, we can find an $y\in P\setminus P^2$ that is not contained in any prime ideal containing $A$; then, $y=PB$ for some $B$ coprime with $P$ and $A$. In particular, the classes of $A$ and $B$ in the class group are the same (they are both the inverse of the class of $P$). Let $H$ be a nonzero ideal in the same class of $P$ that is coprime with $P$, $A$ and $B$ (which exists by prime avoidance); then, $HA$ and $HB$ are principal, say $HA=zR$ and $HB=wR$. Then, $xwR=PAHB=PBHA=yzR$, and in particular $V(xw)=V(yz)$. Let $\mathcal{M}(x)$ be the set of maximal elements of $\insV(R)$ containing $V(x)$, and likewise define $\mathcal{M}(y)$, $\mathcal{M}(z)$ and $\mathcal{M}(w)$; then, $\inf(\mathcal{M}(x)\cup\mathcal{M}(w))=V(x)\cup V(w)=V(xw)=V(yz)=\inf(\mathcal{M}(y)\cup\mathcal{M}(z))$. We claim that $\mathcal{M}(x)\cup\mathcal{M}(w)\neq\mathcal{M}(y)\cup\mathcal{M}(z)$.

There is an element of $\mathcal{M}(x)$ containing $P$: since the class of $P$ is not torsion, it must be equal to $\Theta:=\{P,Q_1,\ldots,Q_n\}$ for some prime ideals $Q_1,\ldots,Q_n$ containing $A$. Since $z\notin P$, no element of $\mathcal{M}(z)$ contains $P$, and in particular $\Theta\notin\mathcal{M}(z)$. If $\Theta'\in\mathcal{M}(y)$ contains $P$ then $\Theta'=\{P,L_1,\ldots,L_m\}$ for some prime ideals $L_1,\ldots,L_m$ containing $B$; since $A$ and $B$ are coprime, each $Q_i$ is different from each $L_j$, and thus $\Theta'\neq\Theta$, and so $\Theta\notin\mathcal{M}(y)$. Hence, there are finite subsets $\Delta\neq\Lambda$ of $\mathcal{M}(S)$ such that $\inf\Delta=\inf\Lambda$; since this property is purely order-theoretic, it follows that $\insV(R)$ and $\insV(S)$ are not isomorphic. By Proposition \ref{prop:isoV}\ref{prop:isoV:filtri}, neither $G(R)$ and $G(S)$ are homeomorphic.
\end{proof}

It would be interesting to know how much further this method can be pushed: for example, is it possible to recover the rank of the class group of $R$ from the order structure of $\insV(R)$?

\medskip

We now consider more in detail the case where the class group of $R$ is torsion. Given $\Delta\subseteq\Max(R)$, we define
\begin{equation*}
G_\Delta(R):=\{x\in R^\nz\mid V(x)=\Delta\}.
\end{equation*}
By the discussion before Theorem \ref{teor:torsion}, if $\Cl(R)$ is torsion then $G_\Delta(R)\neq\emptyset$ for every finite $\Delta\subseteq\Max(R)$.

The following is an analogue of \cite[Lemmas 5.8 and 5.9]{bmt-golomb}.
\begin{prop}\label{prop:GDelta}
Let $R,S$ be Dedekind domains with torsion class group, and let $h:G(R)\longrightarrow G(S)$ be a homeomorphism. Then, there is a bijection $\sigma:\Max(R)\longrightarrow\Max(S)$ such that $h(G_\Delta(R))=G_{\sigma(\Delta)}(S)$.
\end{prop}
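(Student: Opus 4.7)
The strategy is to extract the required bijection $\sigma$ from the order isomorphism $\overline{h}\colon\insV(R)\to\insV(S)$ furnished by Proposition \ref{prop:isoV}\ref{prop:isoV:V}. Since both class groups are torsion, the discussion preceding Theorem \ref{teor:torsion} tells us that $\insV(R)$ and $\insV(S)$ coincide with the lattices of finite subsets of $\Max(R)$ and $\Max(S)$ respectively, ordered by inclusion. In these lattices the minimum element is $\emptyset=V(u)$ for any unit $u$, and the atoms (elements covering the minimum) are precisely the singletons $\{P\}$ for $P\in\Max(R)$ (respectively, for $P\in\Max(S)$).

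Any order isomorphism preserves the minimum and hence sends atoms to atoms. Therefore $\overline{h}$ restricts to a bijection between singletons on the two sides, and we define $\sigma\colon\Max(R)\to\Max(S)$ by the rule $\overline{h}(\{P\})=\{\sigma(P)\}$. For an arbitrary finite $\Delta=\{P_1,\dots,P_n\}\subseteq\Max(R)$ the identity $\Delta=\{P_1\}\vee\cdots\vee\{P_n\}$ holds in the lattice; since order isomorphisms preserve joins, it follows that $\overline{h}(\Delta)=\{\sigma(P_1),\dots,\sigma(P_n)\}=\sigma(\Delta)$.

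Unraveling the definition $\overline{h}(V(x))=V(h(x))$ now yields $V(h(x))=\sigma(V(x))$ for every $x\in R^\nz$. Consequently $x\in G_\Delta(R)$ forces $h(x)\in G_{\sigma(\Delta)}(S)$, giving $h(G_\Delta(R))\subseteq G_{\sigma(\Delta)}(S)$; the reverse inclusion is obtained by applying the same reasoning to the homeomorphism $h^{-1}$, which produces an inverse bijection on the sets of maximal ideals that must coincide with $\sigma^{-1}$ by construction. I do not foresee any real obstacle: the heavy topological lifting has already been carried out in Proposition \ref{prop:isoV}, and what remains is the essentially lattice-theoretic observation that, once the class group is torsion, $\insV(R)$ is just the finite power set of $\Max(R)$ and its atoms are in canonical bijection with the maximal ideals.
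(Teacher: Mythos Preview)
Your proof is correct and follows essentially the same approach as the paper: both extract $\sigma$ from the order isomorphism $\overline{h}$ of Proposition \ref{prop:isoV}\ref{prop:isoV:V} by identifying the singletons $\{P\}$ order-theoretically inside $\insV(R)=\mathcal{P}_{\mathrm{fin}}(\Max(R))$, and then read off $V(h(x))=\sigma(V(x))$. Your execution is in fact slightly more streamlined: you work purely lattice-theoretically (atoms and joins) and invoke $\overline{h}$ directly, whereas the paper first reduces to $h(1)=1$ and then routes the final step back through the filters $\filtro_x$ via Proposition \ref{prop:Fx}\ref{prop:Fx:prime}; the two arguments amount to the same thing.
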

\begin{proof}
By \cite[Theorem 13]{clark-golomb}, $h(1)$ is a unit of $S$. The multiplication by $u$ is a homeomorphism of $S$ which sends every $G_\Lambda(S)$ into itself; hence, passing to $h':G(R)\longrightarrow G(S)$, $x\mapsto h(1)^{-1}h(x)$, we can suppose without loss of generality that $h(1)=1$.

We claim that $|V(x)|=|V(h(x))|$ for every $x\in R^\nz$. Indeed, since $\Cl(R)$ is torsion $\insV(R)\simeq\mathcal{P}_{\mathrm{fin}}(\Max(R))$, and thus $|V(x)|$ is equal to $1$ plus the length of an ascending chain of $\insV(R)$ starting from $V(x)$. By Proposition \ref{prop:isoV}\ref{prop:isoV:V}, this property passes to $\insV(S)$, and thus $|V(x)|=|V(h(x))|$.

Let $\overline{\sigma}$ be the restriction to $\mathcal{M}(R)$ (the set of maximal elements of $\insV(R)$) of the isomorphism $\overline{h}$ of Proposition \ref{prop:isoV}\ref{prop:isoV:V}. Since $\mathcal{M}(R)$ is in natural bijective correspondence with $\Max(R)$ (just send $\{P\}$ into $P$) we get a bijection $\sigma:\Max(R)\longrightarrow\Max(S)$, such that if $P\in\Max(R)$ and $xR$ is $P$-primary then $\sigma(P)$ is the unique maximal ideal of $S$ containing $h(x)$.

If now $x\in G_\Delta(R)$, then $\Delta=\{P\in\Max(R)\mid P\notin\filtro_x\}$; hence, $\sigma(\Delta)=\{Q\in\Max(S)\mid Q\notin\filtro_{h(x)}\}$, and thus $h(x)\in G_{\sigma(\Delta)}(S)$, so $h(G_\Delta(R))\subseteq G_{\sigma(\Delta)}(S)$. Applying the same reasoning to $h^{-1}$ gives the opposite inclusion, and thus $h(G_\Delta(R))=G_{\sigma(\Delta)}(S)$.
\end{proof}

If $h:G(R)\longrightarrow G(S)$, we denote by $h_e:R\longrightarrow S$ the extension of $h$ sending $0$ to $0$.
\begin{teor}\label{teor:radical}
Let $R,S$ be Dedekind domains with torsion class group, and let $h:G(R)\longrightarrow G(S)$ be a homeomorphism. Let $I$ be a radical ideal of $R$. Then, the following hold.
\begin{enumerate}[(a)]
\item\label{teor:radical:rad} $h_e(I)$ is a radical ideal of $S$.
\item\label{teor:radical:num} The number of prime ideals of $R$ containing $I$ is equal to the number of prime ideals of $S$ containing $h_e(I)$.
\item\label{teor:radical:primi} If $I$ is prime, $h_e(I)$ is prime.
\end{enumerate}
\end{teor}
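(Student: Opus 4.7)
The plan is to leverage Proposition \ref{prop:GDelta}, which already gives us a bijection $\sigma:\Max(R)\longrightarrow\Max(S)$ such that $h(G_\Delta(R))=G_{\sigma(\Delta)}(S)$ for every finite $\Delta\subseteq\Max(R)$. The key observation is that in a Dedekind domain every nonzero radical ideal is a finite intersection of maximal ideals, and membership in such an intersection is detected exactly by the value of $V(\cdot)$; this lets us rewrite $I^\nz$ as a disjoint union of layers $G_\Delta(R)$, to which the proposition applies directly.

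More explicitly, first I would dispose of the trivial case $I=0$ (where $h_e(I)=0$ and all three statements are either vacuous or obvious, using that $\sigma$ is a bijection for (b)). Assuming $I$ is nonzero, write $I=P_1\cap\cdots\cap P_n$ with $P_1,\ldots,P_n$ distinct maximal ideals; then
\begin{equation*}
I^\nz=\{x\in R^\nz\mid V(x)\supseteq\{P_1,\ldots,P_n\}\}=\bigsqcup_{\Delta\supseteq\{P_1,\ldots,P_n\}}G_\Delta(R),
\end{equation*}
the union being over finite subsets $\Delta$ of $\Max(R)$ (which is legitimate since $\Cl(R)$ is torsion, so every such $\Delta$ is realized by some element). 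Applying $h$ and using Proposition \ref{prop:GDelta},
\begin{equation*}
h(I^\nz)=\bigsqcup_{\Delta\supseteq\{P_1,\ldots,P_n\}}G_{\sigma(\Delta)}(S)=\{y\in S^\nz\mid V(y)\supseteq\{\sigma(P_1),\ldots,\sigma(P_n)\}\},
\end{equation*}
where I use that the order-isomorphism $\overline{h}$ of Proposition \ref{prop:isoV}\ref{prop:isoV:V}, viewed on $\mathcal{P}_{\mathrm{fin}}(\Max(R))\simeq\insV(R)$, sends $\Delta$ to $\sigma(\Delta)$ pointwise (because any order-isomorphism between finite powerset lattices is determined by its action on atoms and preserves unions/sups). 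Thus $h_e(I)=\sigma(P_1)\cap\cdots\cap\sigma(P_n)$ as a subset of $S$.

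This single identity immediately yields all three parts: (a) the right-hand side is a radical ideal of $S$; (b) since $\sigma$ is a bijection, the images $\sigma(P_1),\ldots,\sigma(P_n)$ are pairwise distinct, so the number of primes of $S$ containing $h_e(I)$ equals $n$, the number of primes of $R$ containing $I$; (c) if $I$ is prime then $n=1$ and $h_e(I)=\sigma(P_1)$ is a maximal, hence prime, ideal of $S$. I do not foresee a substantive obstacle here beyond the small verification that the order-isomorphism $\overline{h}$ acts on finite subsets by applying $\sigma$ elementwise, which follows from the fact that atoms (singletons) go to atoms and joins (unions) are preserved.
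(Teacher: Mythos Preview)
Your proof is correct and essentially identical to the paper's: both decompose $I^\nz$ as the union of the $G_\Delta(R)$ over finite $\Delta\supseteq V(I)$, apply Proposition \ref{prop:GDelta}, and identify the image as the radical ideal $\bigcap_i\sigma(P_i)$. The only cosmetic difference is in part (b), where the paper characterizes the number of primes over $I$ as the least cardinality of a $\Delta$ with $G_\Delta(R)\subseteq I$, whereas you appeal directly to the injectivity of $\sigma$; the two arguments are equivalent.
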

\begin{proof}
Since $I$ is radical, $I=\bigcup\{G_\Delta(R)\mid V(I)\supseteq\Delta\}\cup\{0\}$; hence, applying Proposition \ref{prop:GDelta},
\begin{align*}
h_e(I)= & h\left(\bigcup\{G_\Delta(R)\mid V(I)\supseteq\Delta\}\right)\cup\{0\}=\\
=& \bigcup\{h(G_\Delta(R))\mid V(I)\supseteq\Delta\}\cup\{0\}=\\
=&\bigcup\{G_\Lambda(S)\mid V(I)\subseteq\sigma^{-1}(\Lambda)\}\cup\{0\}=\\
=&\bigcup\{G_\Lambda(S)\mid \sigma(V(I))\subseteq\Lambda\}\cup\{0\}=J
\end{align*}
where $J$ is the radical ideal such that $V(J)=\Delta$, i.e., $J=\bigcap_{Q\in\Lambda}Q$. \ref{teor:radical:rad} is proved.

\ref{teor:radical:num} follows from the fact that that the number of prime ideals containing $I$ is the least $n$ such that there is a subset $\Delta\subseteq\Max(R)$ of cardinality $n$ such that $G_\Delta(R)\subseteq I$. \ref{teor:radical:primi} is immediate from \ref{teor:radical:num}.
\end{proof}

\section{The $P$-topology}\label{sect:Ptop}
The Golomb topology on a Dedekind domain $R$ is a very ``global'' structure: that is, it depends at the same time on all the prime ideals of $R$. In this section, we show a way to ``isolate'' the neighborhoods relative to a single prime ideal $P$, i.e., in the form $a+P^n$. The main idea is the following.
\begin{prop}\label{prop:clopen}
Let $R$ be a Dedekind domain and let $P$ be a prime ideal of $R$; take $\Omega\subseteq R\setminus P$. If $\Omega$ is clopen in $R\setminus P$, then for every $x\in\Omega$ there is an $n\geq 1$ such that $x+P^n\subseteq\Omega$.
\end{prop}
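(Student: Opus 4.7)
My plan is to argue by contradiction. Suppose $\Omega \subseteq R\setminus P$ is clopen, $x \in \Omega$, but $x + P^n \not\subseteq \Omega$ for every $n\geq 1$. Using openness of $\Omega$, I would fix a basic Golomb-neighborhood $x + I$ with $(x+I) \cap (R\setminus P) \subseteq \Omega$ and write $I = P^e I_0$ with $\langle I_0, P\rangle = R$. Using that $\Omega^c := (R\setminus P)\setminus \Omega$ is open, for each $n\geq e$ pick $y_n \in (x+P^n) \setminus \Omega$ and a basic neighborhood $y_n + J_n$ with $(y_n + J_n) \cap (R\setminus P) \subseteq \Omega^c$. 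Since $\Omega$ and $\Omega^c$ are closed in $R\setminus P$, one has
\[
\overline{(x+I)\cap (R\setminus P)} \cap (R\setminus P) \subseteq \Omega, \qquad \overline{(y_n+J_n)\cap(R\setminus P)} \cap (R\setminus P) \subseteq \Omega^c,
\]
so any $z \in R\setminus P$ lying in both closures will yield the contradiction $z \in \Omega \cap \Omega^c$.

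To describe those closures explicitly, I would prove the following consequence of Lemma \ref{lemma:chiusura}: for $z \in R\setminus P$, one has $z \in \overline{(x+I)\cap(R\setminus P)}$ if and only if
\[
z - x \;\in\; \prod_{L \mid I,\; L \notin V(z)} L^{v_L(I)},
\]
the ideal obtained from $I$ by erasing the prime powers corresponding to primes that already contain $z$, and an analogous description holds for $\overline{(y_n+J_n)\cap(R\setminus P)}$. The nontrivial step is to verify that for $z \notin P$ a basic neighborhood $z + J'$ meets $x + I$ if and only if it meets $(x+I) \cap (R\setminus P)$: when $(z+J') \cap (x+I)$ is a nonempty coset of $I \cap J'$, the only obstruction to finding a point outside $P$ would be $I \cap J' \subseteq P$ with the coset representative in $P$; but $P \mid I \cap J'$ forces $P \mid I$ or $P \mid J'$, and a representative is congruent to $x$ modulo $I$ and to $z$ modulo $J'$, so it inherits its non-$P$-ness from either $x$ or $z$.

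Given this criterion, constructing the required $z$ becomes a Chinese Remainder problem. The idea is to force $V(z)$ to contain every non-$P$ prime dividing $IJ_n$, collapsing the two products above to their $P$-components $P^e$ and $P^{v_P(J_n)}$. The surviving conditions on $z$ are then: $z \in L$ for each chosen non-$P$ prime $L \mid IJ_n$, $z \equiv x \pmod{P^e}$, and $z \equiv y_n \pmod{P^{v_P(J_n)}}$. The two $P$-adic congruences are compatible because $y_n \equiv x \pmod{P^n}$ and $n \geq e \geq \min(e, v_P(J_n))$, and a nonzero $z$ outside $P$ solving them all exists by standard CRT (if $P$ divides neither $I$ nor $J_n$, one simply adds $z \not\equiv 0 \pmod P$ as an extra independent constraint).

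The principal obstacle, I expect, lies not in the CRT step but in the closure bookkeeping: the formula for $\overline{(x+I) \cap (R\setminus P)}$ in terms of the ``$V(z)$-coprime part'' of $I$ is the real engine of the argument, since it is precisely this description that converts closedness in the subspace topology into a tractable ideal-theoretic condition on $z - x$.
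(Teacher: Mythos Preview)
Your argument is correct and follows the same contradiction strategy as the paper: pick a basic neighborhood of $x$ inside $\Omega$, a basic neighborhood of a bad $y$ inside $\Omega^c$, and use the explicit closure formula (Lemma~\ref{lemma:chiusura}) together with CRT to produce a point of $R\setminus P$ lying in both closures. The paper streamlines your bookkeeping in one key way: since $R\setminus P$ is itself open in $G(R)$, the clopen set $\Omega$ is open in $G(R)$, so you may choose $x+I\subseteq\Omega$ outright rather than merely $(x+I)\cap(R\setminus P)\subseteq\Omega$; this forces $P\mid I$ (else CRT produces a point of $(x+I)\cap P$), so $(x+I)\cap(R\setminus P)=x+I$ and your ``nontrivial step'' about closures of the intersected coset evaporates---Lemma~\ref{lemma:chiusura} applies directly. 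The same remark applies to $y+J_n$, and the paper in fact shows that the specific exponent $n=v_P(I)$ already satisfies $x+P^n\subseteq\Omega$.
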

\begin{proof}
Fix $\Omega$ clopen in $R\setminus P$ and let $x\in\Omega$. Since $R\setminus P$ is open, $\Omega$ is also an open set of $G(R)$, and thus there is an ideal $I$ such that $x+I\subseteq\Omega$; since $(x+I)\cap P=\emptyset$, we can write $I=P^nJ$ for some $n\geq 1$ and some ideal $J$ coprime with $P$. We claim that $x+P^n\subseteq\Omega$.

Otherwise, let $y\in(x+P^n)\setminus\Omega$; then, $y\in R\setminus P$, and since $(R\setminus P)\setminus\Omega$ is clopen in $R\setminus P$ we can find, as in the previous paragraph, an integer $m\geq 1$ and an ideal $L$ coprime with $P$ such that $y+P^mL\subseteq(R\setminus P)\setminus\Omega$. Since $\Omega$ is clopen in $R\setminus P$, we have $\overline{\Omega}\cap(R\setminus P)=\Omega$; hence, $\overline{x+P^nJ}\cap(R\setminus P)\subseteq\Omega$. Likewise, $\overline{y+P^mL}\cap(R\setminus P)\subseteq(R\setminus P)\setminus\Omega$, and thus in particular $\overline{x+P^nJ}\cap\overline{y+P^mL}=\emptyset$. However,
\begin{equation*}
\overline{x+P^nJ}=\overline{x+P^n}\cap\overline{x+J}=((x+P^n)\cup P)^\nz\cap\overline{x+J}\supseteq(x+P^n)\cap\rad(J)^\nz
\end{equation*}
and likewise $\overline{y+P^mL}\supseteq(y+P^m)\cap\rad(L)^\nz$. Since $y\in x+P^n$, the intersection $(x+P^n)\cap(y+P^m)$ is nonempty, and thus it contains a coset $z+P^t$. Since $J$ and $L$ are coprime with $P$, we have $(x+P^t)\cap\rad(J)^\nz\cap\rad(L)^\nz\neq\emptyset$; this contradicts the construction of $J$ and $L$, and thus $y$ cannot exist, i.e., $x+P^n\subseteq\Omega$. The claim is proved.
\end{proof}

\begin{cor}
Let $R,S$ be Dedekind domain with torsion class group, let $h:G(R)\longrightarrow G(S)$ be a homeomorphism, and let $P$ be a prime ideal of $R$. For every $x\in R\setminus P$, there is an $n$ such that $h(x)+h(P)^n\subseteq h(x+P)$.
\end{cor}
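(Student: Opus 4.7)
The strategy is to reduce the claim to a single application of Proposition \ref{prop:clopen} in the target ring $S$. Let $Q := h_e(P)$, which is a prime ideal of $S$ by Theorem \ref{teor:radical}\ref{teor:radical:primi}. The plan is to show that $h(x+P)$ is a clopen subset of $S\setminus Q$ containing $h(x)$, and then invoke Proposition \ref{prop:clopen} with $\Omega = h(x+P)$ to extract the desired $n$.

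First I would verify that $x+P$ is clopen in $R\setminus P$. Openness is immediate: since $P$ is maximal and $x\notin P$ we have $\langle x,P\rangle = R$, so $x+P$ is a basic open set of $G(R)$, hence open in the subspace $R\setminus P$. For closedness in $R\setminus P$, Lemma \ref{lemma:chiusura} (with $I = P$) gives
\begin{equation*}
\overline{x+P} = \bigl((x+P)\cup P\bigr)^{\nz},
\end{equation*}
so $\overline{x+P}\cap (R\setminus P) = x+P$, confirming that $x+P$ is closed in $R\setminus P$.

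Next, I would observe that $h$ restricts to a homeomorphism between the subspaces $R\setminus P$ and $S\setminus Q$. Indeed, by Theorem \ref{teor:radical} the image $h_e(P) = Q$ is a prime ideal of $S$, so $h$ maps $P^{\nz}$ bijectively onto $Q^{\nz}$, and since $0\in P\cap Q$ this gives $h(R\setminus P) = S\setminus Q$. Because $h$ is a homeomorphism of $G(R)$ with $G(S)$, its restriction to these open subspaces is a homeomorphism, so $h(x+P)$ is clopen in $S\setminus Q$ and contains $h(x)$.

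At this point Proposition \ref{prop:clopen}, applied to $S$, the prime $Q$, the clopen set $\Omega = h(x+P)$, and the point $h(x)\in\Omega$, yields an $n\geq 1$ such that $h(x) + Q^n \subseteq h(x+P)$, which is the desired conclusion once we rewrite $Q^n = h_e(P)^n$. The only step that requires any care is the closure computation in the first paragraph; everything else is a bookkeeping of how $h$ transports the clopen structure from $R\setminus P$ to $S\setminus Q$.
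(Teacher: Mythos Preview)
Your proof is correct and follows essentially the same line as the paper's own argument: show that $x+P$ is clopen in $R\setminus P$ via the closure formula $\overline{x+P}=((x+P)\cup P)^\nz$, transport this through the homeomorphism to conclude that $h(x+P)$ is clopen in $S\setminus h_e(P)$, and then apply Proposition~\ref{prop:clopen}. You simply spell out more of the bookkeeping (why the restriction of $h$ is a homeomorphism of the subspaces, and the appeal to Theorem~\ref{teor:radical} for $h_e(P)$ being prime) than the paper does.
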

\begin{proof}
Since $\overline{x+P}=(x+P)\cup P^\nz$, the set $x+P$ is a clopen set of $R\setminus P$. Hence, $h(x+P)$ is clopen in $S\setminus h(P)$; we now apply the previous proposition.
\end{proof}

Let $P$ be a prime ideal of $R$. We define the \emph{$P$-topology} on $R\setminus P$ as the topology generated by the $\Omega\subseteq R\setminus P$ that are clopen $R\setminus P$, with respect to the Golomb topology. Since every coprime coset $a+P^n$ is clopen in $R\setminus P$, Proposition \ref{prop:clopen} implies that the $P$-topology is generated by $a+P^n$, for $a\in R\setminus P$ and arbitrary $n$. Therefore, the $P$-topology on $R\setminus P$ actually coincides with the restriction of the $P$-adic topology.

In our context, the most useful property of the $P$-topology is that it depends uniquely on the Golomb topology, in the following sense.
\begin{teor}\label{teor:ptop}
Let $R,S$ be Dedekind domain with torsion class group, and let $h:G(R)\longrightarrow G(S)$ be a homeomorphism of Golomb topologies. Then the restriction of $h$ to $R\setminus P$ is a homeomorphism between $R\setminus P$ with the $P$-topology and $S\setminus h(P)$ with the $h(P)$-topology.
\end{teor}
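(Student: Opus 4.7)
The plan is to exploit the fact that the $P$-topology on $R\setminus P$ is defined purely in terms of the subspace topology inherited from the Golomb topology (as the topology generated by the clopen subsets of that subspace). Thus the conclusion will follow once I show that $h$ restricts to a homeomorphism of subspaces $R\setminus P\longrightarrow S\setminus h_e(P)$ in the Golomb topologies, and then quote the general fact that a homeomorphism sends clopen sets to clopen sets.

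First I would take care of the bookkeeping. By Theorem \ref{teor:radical}\ref{teor:radical:primi} applied to the prime ideal $P$, the image $h_e(P)$ is a prime ideal of $S$, so the $h_e(P)$-topology is well-defined. Since $h_e$ extends $h$ by $0\mapsto 0$, we have $h(P^\nz)=h_e(P)^\nz$, and consequently $h$ sends $R\setminus P=R^\nz\setminus P^\nz$ bijectively onto $S\setminus h_e(P)$. Throughout I would write $Q:=h_e(P)$ for short.

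Next I would observe that $R\setminus P$ is an open subset of $G(R)$: if $x\notin P$ then $\langle x,P\rangle=R$, so $x+P$ is a coprime coset contained in $R\setminus P$, giving an open neighborhood of $x$ inside $R\setminus P$; the same applies to $S\setminus Q$. Since $h\colon G(R)\longrightarrow G(S)$ is a homeomorphism restricting to a bijection between these two open sets, the restriction $h|_{R\setminus P}$ is a homeomorphism between $R\setminus P$ and $S\setminus Q$ with respect to the Golomb subspace topologies. Any such homeomorphism induces a bijection between the respective families of clopen subsets; hence $h|_{R\setminus P}$ sends a generating family of the $P$-topology to a generating family of the $Q$-topology, and is therefore a homeomorphism for these topologies as well.

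I do not anticipate a substantive obstacle: the heavy lifting has already been done by Theorem \ref{teor:radical}, which supplies the key fact that $h$ carries prime ideals to prime ideals. Once that is available, the argument is essentially a matter of unwinding the definition of the $P$-topology and of subspace topologies; Proposition \ref{prop:clopen}, which identifies the $P$-topology with the restriction of the $P$-adic topology, is not required for the invariance statement itself.
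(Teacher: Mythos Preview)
Your proposal is correct and follows essentially the same argument as the paper: both proofs use that the $P$-topology is, by definition, generated by the subsets of $R\setminus P$ that are clopen in the Golomb subspace topology, and then observe that a homeomorphism of subspaces carries clopen sets to clopen sets. You simply spell out more of the bookkeeping (that $h_e(P)$ is prime via Theorem~\ref{teor:radical}, that $R\setminus P$ is open in $G(R)$, and that the restriction of $h$ is a bijection onto $S\setminus h_e(P)$) which the paper leaves implicit.
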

\begin{proof}
If $\Omega\subseteq R\setminus P$ is clopen in $R\setminus P$, then $h(\Omega)$ is clopen in $S\setminus h(P)$. Hence, the basic open sets of the $P$-topology go to open sets in the $h(P)$-topology; since the same holds for $h^{-1}$, the restriction of $h$ is a homeomorphism between the $P$-topology and the $h(P)$-topology.
\end{proof}

We end this section by determining the closure of a subset in the $P$-topology.
\begin{prop}\label{prop:chiusPtop}
Let $Y\subseteq R\setminus P$, and let $X$ be the closure of $Y$ in the $P$-topology. Then,
\begin{equation*}
X=\bigcap_{n\geq 1}\pi_n^{-1}(\pi_n(Y)).
\end{equation*}
\end{prop}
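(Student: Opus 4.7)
The plan is to unwind the definition of closure in the $P$-topology using the description of its basic open sets. From the discussion preceding the proposition, the $P$-topology on $R\setminus P$ coincides with the restriction of the $P$-adic topology; in particular, a neighborhood base at a point $x\in R\setminus P$ is given by the cosets $x+P^n$ as $n$ ranges over the positive integers. So the entire content of the statement is really a reformulation of this fact.

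First I would note that $x\in X$ if and only if every basic neighborhood of $x$ meets $Y$, i.e.\ if and only if $(x+P^n)\cap Y\neq\emptyset$ for every $n\geq 1$. Next I would observe that for each $n$, the condition $(x+P^n)\cap Y\neq\emptyset$ is equivalent to the existence of some $y\in Y$ with $x-y\in P^n$, which in turn says exactly that $\pi_n(x)=\pi_n(y)$, i.e.\ that $x\in\pi_n^{-1}(\pi_n(Y))$. Quantifying over all $n$, this gives $X=\bigcap_{n\geq 1}\pi_n^{-1}(\pi_n(Y))$.

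The only mild subtlety to address is that the right-hand side should automatically lie in $R\setminus P$, since the closure is taken in the space $R\setminus P$. This is immediate: if $x\in\pi_1^{-1}(\pi_1(Y))$, then $x\equiv y\pmod{P}$ for some $y\in Y\subseteq R\setminus P$, so $x\notin P$; and $\pi_n^{-1}(\pi_n(Y))\subseteq\pi_1^{-1}(\pi_1(Y))$ for all $n\geq 1$. There is no real obstacle here — the statement is essentially a direct translation of ``closure'' into congruence conditions, once Proposition \ref{prop:clopen} has been used to identify the $P$-topology with the $P$-adic one.
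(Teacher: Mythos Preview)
Your proof is correct and follows essentially the same approach as the paper: both arguments translate the closure condition in the $P$-topology (whose basic neighborhoods are the cosets $x+P^n$) into the congruence conditions $\pi_n(x)\in\pi_n(Y)$ and then intersect over $n$. Your additional remark that the right-hand side automatically lies in $R\setminus P$ is a nice clarification not made explicit in the paper.
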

\begin{proof}
Let $g$ be in the intersection: then, for every $n$, there is $a_n\in Y$ such that $\pi_n(g)=\pi_n(a_n)$, that is, $g-a_n\in P^n$. Hence, $g\in X$. Conversely, if $g$ is in the closure then for every $n$ there is $a_n\in Y$ such that $g-a_n\in P^n$; that is, $\pi_n(g)\in\pi_n(Y)$, as claimed.
\end{proof}

\section{The groups $H_n(P)$}\label{sect:HnP}
Let $R,S$ be Dedekind domain with torsion class group, and let $P$ be a prime ideal of $R$. Let $h:G(R)\longrightarrow G(S)$ be a homeomorphism. By Theorem \ref{teor:radical}, $h(P)$ is a prime ideal of $S$. A natural question is whether this result can be generalized to cosets: that is, if $a\in R\setminus P$, does $h(a+P)=h(a)+h(P)$? In particular, if $h(1)=1$, does $h(1+P)=1+h(P)$? We are not able to prove this result; therefore, our strategy will be to use Proposition \ref{prop:GDelta}, the $P$-topology and the group structure of $U(R/P^n)$ to obtain ``approximate'' results. We collect in this section some technical lemmas which will be useful in the following sections.

The basic idea is to consider the quotients of $R$ into $R/P^n$, or rather the unit groups $U(R/P^n)$. However, as the multiplication by a unit of $R$ is a self-homeomorphism of $G(R)$, it is more useful to study the groups
\begin{equation*}
H_n(P):=U(R/P^n)/\pi_n(U(R)),
\end{equation*}
where $\pi_n:R\longrightarrow R/P^n$ is the canonical surjective map. Furthermore, we denote by $\theta_n:U(R/P^n)\longrightarrow H_n(P)$ the canonical quotient, and by $\widetilde{\pi}_n=\theta_n\circ\pi_n:R\setminus P\longrightarrow H_n(P)$ the composition of the previous maps.

For different $n$, these maps are linked in the following way.
\begin{lemma}\label{lemma:mapHn}
For every $n\geq 1$, there is a surjective map $\lambda_n:H_{n+1}(P)\longrightarrow H_n(P)$ such that the following diagram commutes:
\begin{equation*}
\begin{tikzcd}
R\setminus P\arrow[two heads]{r}{\pi_{n+1}}\arrow[equal]{d} & U(R/P^{n+1})\arrow[two heads]{r}{\theta_{n+1}}\arrow[two heads]{d} & H_{n+1}(P)\arrow[two heads]{d}{\lambda_n}\\
R\setminus P\arrow[two heads]{r}{\pi_n} & U(R/P^n)\arrow[two heads]{r}{\theta_n} & H_n(P).
\end{tikzcd}
\end{equation*}
\end{lemma}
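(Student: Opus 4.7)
The plan is to build $\lambda_n$ from the natural projection $R/P^{n+1}\twoheadrightarrow R/P^n$ and verify it descends to the quotients by $\pi_{n+1}(U(R))$ and $\pi_n(U(R))$.

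First, I would introduce the canonical ring surjection $\mu_n\colon R/P^{n+1}\twoheadrightarrow R/P^n$ (well-defined since $P^{n+1}\subseteq P^n$) and argue that it restricts to a surjective group homomorphism $U(R/P^{n+1})\twoheadrightarrow U(R/P^n)$. For this I would use that $R/P^k$ is a local ring whose maximal ideal is $P/P^k$ (because $P$ is maximal in a Dedekind domain, as $R$ is one-dimensional). Thus $a+P^{n+1}$ is a unit of $R/P^{n+1}$ iff $a\notin P$, iff $a+P^n$ is a unit of $R/P^n$; this shows units land in units. Surjectivity is immediate: an element of $U(R/P^n)$ lifts to some $a\in R\setminus P$, and then $\pi_{n+1}(a)\in U(R/P^{n+1})$ maps to it under $\mu_n$. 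By construction, $\mu_n\circ\pi_{n+1}=\pi_n$ on $R\setminus P$.

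Next, I would observe that $\mu_n$ carries $\pi_{n+1}(U(R))$ into $\pi_n(U(R))$: for any $u\in U(R)$, $\mu_n(\pi_{n+1}(u))=\pi_n(u)\in\pi_n(U(R))$. Hence the composition
\begin{equation*}
U(R/P^{n+1})\xrightarrow{\mu_n} U(R/P^n)\xrightarrow{\theta_n} H_n(P)
\end{equation*}
sends $\pi_{n+1}(U(R))$ to the identity, and so by the universal property of the quotient it factors through $\theta_{n+1}$, yielding a well-defined group homomorphism $\lambda_n\colon H_{n+1}(P)\longrightarrow H_n(P)$ satisfying $\lambda_n\circ\theta_{n+1}=\theta_n\circ\mu_n$. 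Surjectivity of $\lambda_n$ follows from surjectivity of the composition $\theta_n\circ\mu_n$, which is a composition of surjections.

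Finally, I would verify commutativity of the full diagram. The left square commutes because both horizontal maps are $\pi_{n+1}$ and $\pi_n$ and the right vertical map in that square is $\mu_n$, with $\mu_n\circ\pi_{n+1}=\pi_n$ as noted. The right square commutes by the defining identity $\lambda_n\circ\theta_{n+1}=\theta_n\circ\mu_n$ obtained above. This completes the construction. There is no real obstacle here; the only point requiring care is the well-definedness of $\mu_n$ on units, which rests on the local nature of $R/P^k$ guaranteed by $R$ being a one-dimensional Noetherian domain with $P$ nonzero prime (so maximal).
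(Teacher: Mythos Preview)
Your proof is correct and follows essentially the same approach as the paper: define the natural projection $R/P^{n+1}\to R/P^n$, restrict it to units, observe that it carries $\pi_{n+1}(U(R))$ to $\pi_n(U(R))$, and pass to the quotient. You have simply supplied more detail (in particular the justification via the local structure of $R/P^k$) than the paper's two-line argument.
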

\begin{proof}
The natural map $R/P^{n+1}\longrightarrow R/P^n$ restricts to a surjective map $\lambda':U(R/P^{n+1})\longrightarrow U(R/P^n)$. Furthermore, $\lambda'(\pi_{n+1}(U(R)))=\pi_n(U(R))$; hence, $\lambda'$ induces $\lambda_n$, which remains surjective.
\end{proof}

In particular, if $L$ is a subgroup of $H_n(L)$, then we have a sequence of surjective maps
\begin{equation}\label{eq:telescope}
\begin{tikzcd}
L & \arrow{l}[swap]{\lambda_n}L_1:=\lambda_n^{-1}(L) & \arrow{l}[swap]{\lambda_{n+1}^{-1}} L_2:=\lambda_{n+1}(L_1) & \arrow{l}[swap]{\lambda_{n+2}}\cdots
\end{tikzcd}
\end{equation}
where each $L_i$ is a subgroup of $H_{n+i}(L)$. Furthermore, for every $i$, the index $[H_{n+i}(P):L_i]$ is equal to the index $[H_n(P):L]$ of $L$, and in particular it is constant. We call the sequence $\{L,L_1,\ldots,\}$ the \emph{telescopic sequence} of $L$. 

When $L=H_1(P)$, the telescopic sequence of $L$ is just the sequence $\{H_1(P),H_2(P),\ldots\}$. We distinguish two classes of behavior.

One case is when the maps $\lambda_n$ are isomorphisms for every $n\geq N$: in this case, all the information about the $H_n(P)$ ``stops at $N$''. If $R/P$ is finite (and thus also $U(R/P^n)$ and $H_n(P)$ are finite for every $n\geq 1$) then in particular the sequence of the cardinalities of the $H_n(P)$ is bounded. This happens, for example, if $R=\insZ[1/p]$ for some prime number $p$.

The second case is when there are infinitely many $\lambda_n$ that are not isomorphisms, as it happens for $R=\insZ$: in this case, the structure of the $H_n(P)$ depends on all the groups. If $R/P$ is finite, this implies that the sequence of the cardinalities of the $H_n(P)$ is not bounded; however, a part of the structure of these groups still remain fixed, as we show next. Given an abelian group $L$ and a prime number $p$, the \emph{non-$p$-component} of $L$ is the subgroup of $L$ formed by the elements whose order is coprime with $p$.
\begin{lemma}\label{lemma:etaP}
Let $R$ be a Dedekind domain and let $P$ be a prime ideal such that $R/P$ is finite; let $p$ be the characteristic of $R/P$. Then, there is an integer $\eta(P)$, coprime with $p$, such that, for all $n\geq N$, the non-$p$-component of $H_n(P)$ has order $\eta(P)$.
\end{lemma}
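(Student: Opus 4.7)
The plan is to reduce the lemma to the single observation that the kernel of each map $\lambda_n$ of Lemma \ref{lemma:mapHn} is a $p$-group. Once this is established, a standard fact about finite abelian groups (namely, that quotienting by a $p$-subgroup leaves the non-$p$-component unchanged) immediately yields that the non-$p$-component of $H_n(P)$ has constant order; identifying this order with $|H_1(P)|$ finishes the proof with $N=1$.

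First I would compute the kernel of the natural surjection $\lambda'\colon U(R/P^{n+1})\longrightarrow U(R/P^n)$. An element lies in this kernel if and only if it is of the form $1+x+P^{n+1}$ with $x\in P^n$, and the rule $1+x\mapsto x+P^{n+1}$ defines a group isomorphism onto the additive group $P^n/P^{n+1}$: multiplicativity turns into additivity via $(1+x)(1+y)=1+(x+y)+xy$ together with $xy\in P^{2n}\subseteq P^{n+1}$. Since $P^n/P^{n+1}$ is a one-dimensional $R/P$-vector space, it has cardinality $|R/P|$, which is a power of $p$; hence $\ker(\lambda')$ is a $p$-group.

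Next I would transport this computation through the quotient maps $\theta_n$. Chasing the commutative diagram of Lemma \ref{lemma:mapHn}, any $\alpha\in\ker(\lambda_n)\subseteq H_{n+1}(P)$ lifts to some $u\in U(R/P^{n+1})$ with $\lambda'(u)\in\pi_n(U(R))$; using that $\pi_n(U(R))=\lambda'(\pi_{n+1}(U(R)))$, one writes $u=\pi_{n+1}(r)\cdot k$ with $r\in U(R)$ and $k\in\ker(\lambda')$, whence $\alpha=\theta_{n+1}(k)$. Therefore $\ker(\lambda_n)=\theta_{n+1}(\ker(\lambda'))$ is a quotient of the $p$-group $\ker(\lambda')$, hence itself a $p$-group.

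Finally, I would invoke the following standard fact: if $1\to K\to A\to B\to 1$ is a short exact sequence of finite abelian groups and $K$ is a $p$-group, then the non-$p$-components of $A$ and $B$ are isomorphic (decompose $A=A_{p'}\times A_p$, note $K\subseteq A_p$, and read off $B\cong A_{p'}\times(A_p/K)$). Applied to $1\to\ker(\lambda_n)\to H_{n+1}(P)\to H_n(P)\to 1$ for all $n\geq 1$, this shows that the non-$p$-component of $H_n(P)$ has an order independent of $n$. Since $H_1(P)$ is a quotient of $U(R/P)$, which is cyclic of order $|R/P|-1$ and hence coprime to $p$, the whole of $H_1(P)$ coincides with its non-$p$-component; one therefore sets $\eta(P):=|H_1(P)|$. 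I do not anticipate any real obstacle beyond the mild bookkeeping needed to identify $\ker(\lambda_n)$ inside $H_{n+1}(P)$.
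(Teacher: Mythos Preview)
Your argument is correct and in fact proves slightly more than the paper does. The paper's own proof is different: it observes that $\lambda_k$ maps the non-$p$-component $H'_{k+1}(P)$ onto $H'_k(P)$, so the orders $\eta_k(P):=|H'_k(P)|$ form an ascending chain under divisibility; it then bounds this chain above by noting that $|U(R/P^n)|=p^{e(n-1)}(p^e-1)$ (where $|R/P|=p^e$), so every $\eta_k(P)$ divides $p^e-1$, and the chain must stabilize at some $\eta(P)$ for $n\geq N$.

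Your route, by contrast, identifies $\ker(\lambda')\cong P^n/P^{n+1}$ as a $p$-group and pushes this through $\theta_{n+1}$ to see that $\ker(\lambda_n)$ is a $p$-group for every $n$; the decomposition $A=A_{p'}\times A_p$ then gives that the non-$p$-component is literally unchanged at each step. This yields the sharper conclusion that $N=1$ works and that $\eta(P)=|H_1(P)|$ explicitly (a fact the paper only records later, in Proposition~\ref{prop:Yk-UR}\ref{prop:Yk-UR:cop}, under an extra hypothesis). The paper's argument is shorter and avoids the kernel computation, at the cost of leaving $N$ and $\eta(P)$ inexplicit; your argument is a bit more work but more informative.
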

\begin{proof}
Let $H'_k(P)$ be non-$p$-component of $H_k(P)$, and let $\eta_k(P)$ be its cardinality. Since $\lambda_k(H'_{k+1}(P))=H'_k(P)$, $\eta_k(P)$ divides $\eta_{k+1}(P)$, and thus $\{\eta_k(P)\}_{k\inN}$ is an ascending chain with respect to the divisibility order. Furthermore, if $|R/P|=p^e$, then $|U(R/P^n)|=p^{e(n-1)}(p^e-1)$, and thus $\eta_k(P)$ divides $p^e-1$; hence, the chain is bounded above and thus finite. It follows that it stabilizes at some value $\eta(P)$.
\end{proof}

Several results in the following sections will be valid only under the assumption that the $H_n(P)$ are cyclic. This forces a rather severe limit on the cardinalities of the residue fields.
\begin{lemma}\label{lemma:almcyc-cardRP}
Let $R$ be a Dedekind domain, and let $P$ be a prime ideal of $R$. If $U(R)$ is discrete in the $P$-topology, and $H_n(P)$ is cyclic for every $n$, then $|R/P|$ is a prime number.
\end{lemma}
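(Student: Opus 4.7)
The plan is to contrast an upper and a lower bound on the multiplicative exponent of $V_n := 1+P/P^n$ to force the residue field to be prime. From discreteness, choose $N$ with $U(R)\cap(1+P^N)=\{1\}$; then $\pi_n|_{U(R)}$ is injective for every $n\geq N$. Setting $W_n := V_n\cap\pi_n(U(R)) = \pi_n(U(R)\cap(1+P))$, the natural surjective maps $W_m\to W_n$ (for $m\geq n\geq N$) are in fact bijections, so in particular $W_m\cap(1+P^n/P^m)=\{1\}$. The natural embedding $V_n/W_n\hookrightarrow H_n(P)$ makes $V_n/W_n$ cyclic for every $n$.

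First I would rule out infinite $R/P$. For $m>n\geq N$, the triviality $W_m\cap(1+P^n/P^m)=\{1\}$ means the subgroup $K := 1+P^n/P^m\subseteq V_m$ injects into the cyclic group $V_m/W_m$. But $|K|=|R/P|^{m-n}$ is infinite, while $V_m$ is either torsion (in positive residual characteristic, so $V_m/W_m$ is a finite cyclic $p$-group) or torsion-free divisible (in characteristic zero, since $\binom{1/k}{j}\in\insQ\subseteq R/P^m$ makes the formal binomial series define $k$-th roots; then its only cyclic quotient is trivial, forcing $W_m=V_m$ and contradicting $K\cap W_m=\{1\}$ for any nontrivial $K$). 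Either way an infinite $K$ cannot embed, so $R/P$ must be finite; write $|R/P|=p^e$ and $s := v_P(p)$.

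With $R/P$ finite, $V_n$ is the Sylow $p$-subgroup of $U(R/P^n)$ of order $p^{e(n-1)}$, so $V_n/W_n$ is the Sylow $p$-subgroup of the cyclic $H_n(P)$, hence cyclic of order (and exponent) $p^{e(n-1)}/B$, where $B:=|U(R)\cap(1+P)|$. Thus $\exp V_n\geq p^{e(n-1)}/B$. For the opposite bound, for $y\in P$ and $k\geq 1$ expand
\begin{equation*}
(1+y)^{p^k}-1 = \sum_{j=1}^{p^k}\binom{p^k}{j}y^j,
\end{equation*}
and use Kummer's formula $v_p\binom{p^k}{j}=k-v_p(j)$ together with $v_P|_{\insZ}=s\cdot v_p$ to see that each summand has $v_P\geq s(k-v_p(j))+j\,v_P(y)$. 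Minimising over $j=p^m$, $0\leq m\leq k$, yields $v_P((1+y)^{p^k}-1)\geq sk - C$ for a constant $C=C(p,s)$; hence $\exp V_n \leq p^{\lceil(n+C)/s\rceil}$.

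Combining the bounds, $p^{e(n-1)}/B \leq p^{\lceil(n+C)/s\rceil}$; taking $p$-adic logarithms gives $(e-1/s)\,n \leq O(1)$. Letting $n\to\infty$ forces $e\leq 1/s\leq 1$, so $e=1$ and $|R/P|=p$ is prime. The main obstacle is the exponent upper bound, where one must control $v_P((1+y)^{p^k}-1)$ uniformly over all $y\in P$ (not just a uniformizer), reducing to the convex minimisation of $m\mapsto s(k-m)+p^m\,v_P(y)$ over nonnegative integers $m$. A secondary subtlety is the reduction to finite residue field, handled above by embedding the nontrivial kernel of $V_m\to V_n$ into the cyclic quotient $V_m/W_m$.
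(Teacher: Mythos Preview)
Your argument is correct, but it takes a considerably longer and more technical route than the paper's. The paper does everything in one stroke: pick $N\geq 2$ with $U(R)\cap(1+P^{N-1})=\{1\}$ and an element $t\in P^{N-1}\setminus P^N$; then $t^2\in P^N$, so the map
\[
\sigma\colon R\longrightarrow H_N(P),\qquad a\longmapsto \widetilde{\pi}_N(1+at)
\]
satisfies $\sigma(a)\sigma(b)=\widetilde{\pi}_N(1+(a+b)t)=\sigma(a+b)$, i.e.\ it is a group homomorphism from $(R,+)$ to $H_N(P)$. The discreteness hypothesis gives $\ker\sigma=P$ exactly, so $(R/P,+)$ embeds in the cyclic group $H_N(P)$ and is therefore cyclic; a field with cyclic additive group is $\ins{F}_p$. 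This handles the infinite and finite residue field cases simultaneously and needs no estimates at all.

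Your approach instead (i) disposes of the infinite case separately, using that $1+P^n/P^m$ injects into the cyclic quotient $V_m/W_m$, and (ii) in the finite case compares a lower bound $\exp V_n\geq p^{e(n-1)}/B$ (coming from the cyclic Sylow $p$-subgroup of $H_n(P)$) with an upper bound $\exp V_n\leq p^{\lceil (n+C)/s\rceil}$ obtained from Kummer's formula for $v_p\binom{p^k}{j}$. Both bounds are correct, and letting $n\to\infty$ forces $es\leq 1$. One small remark: this actually proves \emph{more} than the lemma asks, namely that $s=v_P(p)=1$ as well; so under the hypotheses the prime $P$ is automatically unramified over the residual characteristic. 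The trade-off is that you pay for this extra information with a case split, an analytic estimate, and the auxiliary divisibility argument in characteristic zero, whereas the paper's embedding trick gives the stated conclusion in a few lines.
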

\begin{proof}
Since $U(R)$ is discrete in the $P$-topology, there is an $N\geq 2$ such that $1+P^{N-1}$ contains no units different from $1$. Let $p\in P^{N-1}\setminus P^N$, and for every $a\in R$ let $\sigma(a):=\widetilde{\pi}_N(1+ap)$. Then, $p^2\in P^N$ and thus
\begin{equation*}
\sigma(a)\sigma(b)=\widetilde{\pi}_N((1+ap)(1+bp))=\widetilde{\pi}_N(1+(a+b)p)=\sigma(a+b).
\end{equation*}
Hence, the map $a\mapsto\sigma(a)$ is a homomorphism from the additive group of $R$ to the multiplicative subgroup $\Sigma:=\{\sigma(a)\mid a\in R\}$ of $H_N(P)$. Furthermore,
\begin{equation*}
\begin{aligned}
\ker\sigma= & \{a\in R\mid \widetilde{\pi}_N(1+ap)=\widetilde{\pi}_N(1)\}=\\
= & \{a\in R\mid 1+ap\in U(R)+P^N\}=P
\end{aligned}
\end{equation*}
by the choice of $N$ and $p$. Therefore, $\sigma$ factors into an embedding of $(R/P,+)$ inside $H_N(P)$; since $H_N(P)$ is cyclic, it follows that also $(R/P,+)$ is cyclic, and since $R/P$ is a field it must be equal to $\ins{F}_p$ for some prime number $p$. In particular, $|R/P|$ is prime.
\end{proof}

Note that the fact that $|R/P|$ is a prime number does not guarantee that $H_n(P)$ is cyclic: for example, if $R=\ins{F}_p[X]$, where $p>2$ is a prime number, and $P=(X)$, then $H_3(P)$ has $p^2$ elements, but every element has order $p$.

\section{Closure of powers}\label{sect:clos}

In isolation, the $P$-topology is not very interesting: indeed, since it coincides with the $P$-adic topology, it makes $R\setminus P$ into a metric space with no isolated points. In particular, if $R$ is countable then $R\setminus P$ is homeomorphic to $\insQ$ \cite{sierpinski-Q,dagupta-atlas}, and thus a homeomorphism between the $P$-topology of $R\setminus P$ and the $Q$-topology of $S\setminus Q$ does not give much information. However, by Proposition \ref{prop:GDelta}, a homeomorphism $h$ between Golomb spaces carries a lot more structure.

In the following, we shall mostly restrict ourselves to Dedekind domains with torsion class group. Given $a\in R\setminus P$, set
\begin{equation*}
\pow{a}:=\{ua^t\mid u\in U(R),t\inN^+\}.
\end{equation*}
We want to study the closure of $\pow{a}$ in the $P$-topology.

\begin{prop}\label{prop:chiuspow}
Let $R$ be a Dedekind domain, $P$ a prime ideal, $a\in R\setminus P$; let $X$ be the closure of $\pow{a}$ in the $P$-topology. Then, the following hold.
\begin{enumerate}[(a)]
\item\label{prop:chiuspow:princtors-pi} If $\pi_n(a)$ is torsion in $U(R/P^n)$ for every $n\geq 1$ then
\begin{equation*}
X=\bigcap_{n\geq 1}\pi_n^{-1}\bigl{(}\langle\pi_n(a),\pi_n(U(R))\rangle\bigl{)}.
\end{equation*}
\item\label{prop:chiuspow:princtors-Hn} If $\widetilde{\pi}_n(a)$ is torsion in $H_n(P)$ for every $n\geq 1$ then
\begin{equation*}
X=\bigcap_{n\geq 1}\widetilde{\pi}_n^{-1}(\langle \widetilde{\pi}_n(a)\rangle).
\end{equation*}
\end{enumerate}
\end{prop}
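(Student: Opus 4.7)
The plan is to apply Proposition \ref{prop:chiusPtop}, which describes the closure in the $P$-topology as $X=\bigcap_{n\geq 1}\pi_n^{-1}(\pi_n(\pow{a}))$, and then identify $\pi_n(\pow{a})$ (respectively its saturation under $\theta_n$) with the claimed subgroup of $U(R/P^n)$ (respectively $H_n(P)$). In both cases the computation is done $n$ by $n$.

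For part \ref{prop:chiuspow:princtors-pi}, I would simply write
\begin{equation*}
\pi_n(\pow{a})=\{\pi_n(u)\pi_n(a)^t\mid u\in U(R),\,t\geq 1\}=\pi_n(U(R))\cdot\{\pi_n(a)^t\mid t\geq 1\}.
\end{equation*}
Since $\pi_n(a)$ is torsion in $U(R/P^n)$, positive powers already exhaust $\langle\pi_n(a)\rangle$ (including the identity), so the second factor equals $\langle\pi_n(a)\rangle$. As $U(R/P^n)$ is abelian, $\pi_n(U(R))\cdot\langle\pi_n(a)\rangle=\langle\pi_n(U(R)),\pi_n(a)\rangle$, and plugging into Proposition \ref{prop:chiusPtop} gives the formula.

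For part \ref{prop:chiuspow:princtors-Hn}, after unraveling definitions the goal becomes $\pi_n^{-1}(\pi_n(\pow{a}))=\widetilde{\pi}_n^{-1}(\langle\widetilde{\pi}_n(a)\rangle)$, i.e.\ $\pi_n(\pow{a})=\theta_n^{-1}(\langle\widetilde{\pi}_n(a)\rangle)$. The inclusion $\subseteq$ is immediate since $\theta_n(\pi_n(u)\pi_n(a)^t)=\widetilde{\pi}_n(a)^t\in\langle\widetilde{\pi}_n(a)\rangle$. For the reverse inclusion the key observation is that if $\widetilde{\pi}_n(a)$ has order $m$, then $\pi_n(a)^m=\pi_n(u_0)$ for some $u_0\in U(R)$; taking the unit $u_0^{-1}$ and the exponent $t=m\geq 1$ shows $1=\pi_n(u_0^{-1})\pi_n(a)^m\in\pi_n(\pow{a})$, and hence $\pi_n(U(R))\subseteq\pi_n(\pow{a})$ by multiplication. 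Once $\pi_n(\pow{a})$ contains $\pi_n(U(R))$, it is automatically a union of $\pi_n(U(R))$-cosets, and these cosets are precisely the ones whose classes in $H_n(P)$ lie in $\langle\widetilde{\pi}_n(a)\rangle$. Intersecting over all $n$ and invoking Proposition \ref{prop:chiusPtop} completes the proof.

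The main subtlety — not really an obstacle — is to notice the asymmetry between the two hypotheses: in \ref{prop:chiuspow:princtors-pi} the torsion of $\pi_n(a)$ itself makes positive powers closed under inversion, while in \ref{prop:chiuspow:princtors-Hn} the weaker assumption that $\widetilde{\pi}_n(a)$ is torsion is used in the slightly different way of producing the identity inside $\pi_n(\pow{a})$, which is what allows $\pi_n(\pow{a})$ to absorb $\pi_n(U(R))$ and thus to coincide with its $\theta_n$-saturation. Everything else is bookkeeping via the surjective maps $\pi_n$ and $\theta_n$.
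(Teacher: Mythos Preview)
Your argument is correct and follows the same approach as the paper: apply Proposition \ref{prop:chiusPtop} and identify $\pi_n(\pow{a})$ with the appropriate subgroup using the torsion hypothesis. Your treatment of part \ref{prop:chiuspow:princtors-Hn} is simply more explicit than the paper's, which dispatches it in one line by noting that $\widetilde{\pi}_n$ kills $U(R)$ (implicitly using that $\pow{a}$ is $U(R)$-stable, so $\pi_n^{-1}(\pi_n(\pow{a}))=\widetilde{\pi}_n^{-1}(\widetilde{\pi}_n(\pow{a}))$); your computation of $\theta_n^{-1}(\langle\widetilde{\pi}_n(a)\rangle)$ unpacks exactly this.
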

\begin{proof}
\ref{prop:chiuspow:princtors-pi} If $\pi_n(a)$ is torsion with order $k$, then
\begin{equation*}
\begin{aligned}
\pi_n(\pow{a})= & \{\pi_n(u)\pi_n(a)^t+P^n\mid u\in U(R),t\inN^+\}=\\
= & \{\pi_n(u)\pi_n(a)^t+P^n\mid u\in U(R),t\in\{1,\ldots,k\}\}
\end{aligned}
\end{equation*}
is exactly the subgroup generated by $\pi_n(a)$ and $\pi_n(U(R))$. The claim now follows from Proposition \ref{prop:chiusPtop}.

\ref{prop:chiuspow:princtors-Hn} follows as the previous point, noting that $\widetilde{\pi}_n$ sends all of $U(R)$ into the identity.
\end{proof}

In general, we would like for the sequence $\widetilde{\pi}_n^{-1}(\langle \widetilde{\pi}_n(a)\rangle)$ to stabilize: in this case, we could study the closures of the $\pow{a}$ simply by studying the subgroups of the $H_n(P)$. In general, this is not true: for example, this happens if $a=1$ (so $\pow{a}=U(R)$) and the cardinality of $H_n(P)$ goes to infinity. However, we can characterize this case; we distinguish the two behaviors of the $\lambda_n$.

\begin{prop}\label{prop:chius-powa-sgr-bounded}
Let $R$ be a Dedekind domain, $P$ a prime ideal, $X\subseteq R\setminus P$. Suppose that the canonical surjections $\lambda_n:H_{n+1}(P)\longrightarrow H_n(P)$ are isomorphisms for $n\geq N$. Then, the following are equivalent.
\begin{enumerate}[(i)]
\item $X$ is the closure of $\pow{a}$ for some $a\in R\setminus P$;
\item $X=\widetilde{\pi}_N^{-1}(L)$ for some cyclic subgroup $L$ of $H_N(P)$.
\end{enumerate}
\end{prop}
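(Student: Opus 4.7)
My plan is to reduce both directions of the equivalence to Proposition \ref{prop:chiuspow}\ref{prop:chiuspow:princtors-Hn}, which expresses the $P$-topology closure of $\pow{a}$ as $\bigcap_n \widetilde{\pi}_n^{-1}(\langle \widetilde{\pi}_n(a) \rangle)$, and then to collapse the intersection onto $H_N(P)$ by exploiting the stability of the $H_n(P)$ for $n \geq N$. Concretely, Lemma \ref{lemma:mapHn} gives maps $\mu_n := \lambda_N \circ \cdots \circ \lambda_{n-1}\colon H_n(P) \to H_N(P)$ for $n \geq N$, which by hypothesis are isomorphisms, and $\Lambda_n := \lambda_n \circ \cdots \circ \lambda_{N-1}\colon H_N(P) \to H_n(P)$ for $n < N$, which are surjections; in both cases $\widetilde{\pi}_N$ factors through $\widetilde{\pi}_n$ (or vice versa), so for every $S \subseteq H_N(P)$ one has $\widetilde{\pi}_N^{-1}(S) = \widetilde{\pi}_n^{-1}(\mu_n^{-1}(S))$ for $n \geq N$ and $\widetilde{\pi}_N^{-1}(S) \subseteq \widetilde{\pi}_n^{-1}(\Lambda_n(S))$ for $n < N$.

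Given $a \in R \setminus P$, let $g := \widetilde{\pi}_N(a)$ and $L := \langle g \rangle$. Since $\mu_n$ is an isomorphism carrying $\widetilde{\pi}_n(a)$ to $g$, we have $\mu_n^{-1}(L) = \langle \widetilde{\pi}_n(a) \rangle$ for $n \geq N$, and similarly $\Lambda_n(L) = \langle \widetilde{\pi}_n(a) \rangle$ for $n < N$. Assuming each $\widetilde{\pi}_n(a)$ is torsion in $H_n(P)$ so that Proposition \ref{prop:chiuspow}\ref{prop:chiuspow:princtors-Hn} applies, the closure of $\pow{a}$ in the $P$-topology equals
\begin{equation*}
\bigcap_{n \geq 1} \widetilde{\pi}_n^{-1}(\langle \widetilde{\pi}_n(a) \rangle) = \widetilde{\pi}_N^{-1}(L),
\end{equation*}
because every term with $n \geq N$ equals $\widetilde{\pi}_N^{-1}(L)$ and every term with $n < N$ contains it. This immediately gives (i) $\Rightarrow$ (ii) (set $L = \langle \widetilde{\pi}_N(a) \rangle$) and, using the surjectivity of $\widetilde{\pi}_N$ to pick $a$ with $\widetilde{\pi}_N(a) = g$, also (ii) $\Rightarrow$ (i).

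The main obstacle is verifying the torsion hypothesis invoked above. Note that since $\mu_n$ is an isomorphism for $n \geq N$ and $\widetilde{\pi}_n(a) = \Lambda_n(\widetilde{\pi}_N(a))$ for $n < N$, all of the $\widetilde{\pi}_n(a)$ are torsion in $H_n(P)$ if and only if $\widetilde{\pi}_N(a)$ is torsion in $H_N(P)$. In the setting of primary interest in the paper, where $R/P$ is finite, the groups $H_n(P)$ are themselves finite and so this is automatic; without such a finiteness assumption, $\{g^t \mid t \geq 1\}$ could be a proper sub-monoid of $\langle g \rangle$ and the equivalence as stated would need to be modified.
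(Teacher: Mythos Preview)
Your approach is essentially the same as the paper's: both arguments use the stability of the $\lambda_n$ for $n\geq N$ to show that $\widetilde{\pi}_N^{-1}(\langle\widetilde{\pi}_N(a)\rangle)=\widetilde{\pi}_{N+k}^{-1}(\langle\widetilde{\pi}_{N+k}(a)\rangle)$ for all $k\geq 0$, and then collapse the intersection coming from Proposition~\ref{prop:chiuspow}\ref{prop:chiuspow:princtors-Hn}. The paper's proof is terser (it does not separately treat the indices $n<N$, nor spell out the converse direction via surjectivity of $\widetilde{\pi}_N$), but the content is the same.

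Your remark about the torsion hypothesis is well taken: both your argument and the paper's implicitly invoke Proposition~\ref{prop:chiuspow}\ref{prop:chiuspow:princtors-Hn}, which needs $\widetilde{\pi}_n(a)$ torsion so that the monoid $\{\widetilde{\pi}_n(a)^t:t\geq 1\}$ coincides with the cyclic group $\langle\widetilde{\pi}_n(a)\rangle$. The paper does not comment on this, presumably because in all the applications (Proposition~\ref{prop:caratt-XP} onward) one has $R/P$ finite, so each $H_n(P)$ is finite and the issue is moot.
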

\begin{proof}
For every $a\in R\setminus P$, we have $\lambda_{N+k}(\langle\widetilde{\pi}_{N+k+1}(a)\rangle)=\langle\widetilde{\pi}_{N+k}(a)\rangle$ for every $k\geq 0$. Hence, $\widetilde{\pi}_N^{-1}(\langle \widetilde{\pi}_N(a)\rangle)=\widetilde{\pi}_{N+k}^{-1}(\langle \widetilde{\pi}_{N+k}(a)\rangle)$ for every $k\geq 0$. The claim follows.
\end{proof}

When the canonical surjections are not isomorphisms, the picture is more complicated. For simplicity, we restrict to the case where $R/P$ is finite.
\begin{prop}\label{prop:chius-powa-sgr}
Let $R$ be a Dedekind domain, $P$ a prime ideal, $a\in R\setminus P$; let $X$ be the closure of $\pow{a}$ in the $P$-topology. Suppose that $R/P$ is finite and that there are infinitely many $n$ such that $\lambda_n:H_{n+1}(P)\longrightarrow H_n(P)$ is not an isomorphism. Then, the following are equivalent:
\begin{enumerate}[(i)]
\item\label{prop:chius-powa-sgr:stab} the chain $\{\widetilde{\pi}_n^{-1}(\langle \widetilde{\pi}_n(a)\rangle)\}_{n\inN}$ stabilizes;
\item\label{prop:chius-powa-sgr:sgr} $X=\widetilde{\pi}_N^{-1}(L)$ for some subgroup $L$ of $H_N(P)$;
\item\label{prop:chius-powa-sgr:tseq1} there is an $N$ such that every element of the telescopic sequence of $\langle\widetilde{\pi}_N(a)\rangle$ is generated by the image of $a$;
\item\label{prop:chius-powa-sgr:tseq2} there is an $N$ such that every element of the telescopic sequence of $\langle\widetilde{\pi}_N(a)\rangle$ is cyclic, and the order of $\widetilde{\pi}_n(a)$ goes to infinity as $n\to\infty$.
\end{enumerate}
\end{prop}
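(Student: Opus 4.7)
The plan is to prove (i)$\Leftrightarrow$(ii)$\Leftrightarrow$(iii) and (iii)$\Rightarrow$(iv) directly, with (iv)$\Rightarrow$(iii) being the main content. Write $a_n:=\widetilde{\pi}_n(a)$. Because $R/P$ is finite, each $H_n(P)$ is finite and each $a_n$ is torsion, so by Proposition \ref{prop:chiuspow}\ref{prop:chiuspow:princtors-Hn} the closure $X$ equals the descending intersection $\bigcap_{n\geq 1}\widetilde{\pi}_n^{-1}(\langle a_n\rangle)$.

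For (i)$\Leftrightarrow$(iii): since $\widetilde{\pi}_{n+1}$ is surjective and $\lambda_n\circ\widetilde{\pi}_{n+1}=\widetilde{\pi}_n$, the equality $\widetilde{\pi}_{n+1}^{-1}(\langle a_{n+1}\rangle)=\widetilde{\pi}_n^{-1}(\langle a_n\rangle)$ is equivalent to $\langle a_{n+1}\rangle=\lambda_n^{-1}(\langle a_n\rangle)$; iterating, the chain stabilizes at $N$ iff the telescopic sequence satisfies $L_k=\langle a_{N+k}\rangle$ for every $k\geq 0$, which is (iii). For (i)$\Leftrightarrow$(ii), the direction (i)$\Rightarrow$(ii) is immediate with $L=\langle a_N\rangle$; conversely, if $X=\widetilde{\pi}_N^{-1}(L)$, then $\pow{a}\subseteq X\subseteq\widetilde{\pi}_N^{-1}(\langle a_N\rangle)$ combined with $\widetilde{\pi}_N(\pow{a})=\langle a_N\rangle$ forces $L=\langle a_N\rangle$ and pinches the descending chain into stability from $N$ on. Finally (iii)$\Rightarrow$(iv) is immediate: each $L_k=\langle a_{N+k}\rangle$ is cyclic by construction, and $|L_k|=|L_0|\prod_{j=1}^k|\ker\lambda_{N+j-1}|\to\infty$ since infinitely many $\lambda_n$ are not isomorphisms.

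For the main step (iv)$\Rightarrow$(iii), the key structural input is that $\ker\lambda_n$ is a quotient of $(1+P^n)/(1+P^{n+1})\cong(R/P,+)$, hence an elementary abelian $p$-group with $p:=\car(R/P)$. Under (iv) each $K_j:=\ker\lambda_{N+j-1}$ sits inside the cyclic group $L_j$, so as a cyclic elementary abelian $p$-group it has order $1$ or $p$; consequently the prime-to-$p$ part of $L_k$ is a constant finite cyclic group $T$, while the $p$-primary part is cyclic of order $p^{a+e_k}$ where $e_k:=\#\{j\leq k:|K_j|=p\}\to\infty$ by hypothesis, and the splitting $L_k\cong T\times\insZ/p^{a+e_k}$ is canonical and intertwines the maps $\lambda$. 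The inverse limit $L_\infty:=\varprojlim L_k$ is thus isomorphic to $T\times\insZ_p$, and the coherent sequence $(a_{N+k})_k$ corresponds to an element $(a_T,\alpha)\in T\times\insZ_p$ with $a_T$ independent of $k$; the assumption $\mathrm{ord}(a_n)\to\infty$ forces $\alpha\neq 0$, so $v:=v_p(\alpha)<\infty$.

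A direct computation in the decomposition $L_k\cong T\times\insZ/p^{a+e_k}$ then gives, for $k$ with $a+e_k>v$,
\[
[L_k:\langle a_{N+k}\rangle]=\frac{|T|}{\mathrm{ord}(a_T)}\cdot p^v,
\]
independent of $k$. Choosing $N'\geq N$ past which this index is constant, a straightforward verification (the ratios $|K_{N'+i}|/d_{N'+i}$ are all $1$ in the stable range) shows that the telescopic sequence starting at $\langle a_{N'}\rangle$ satisfies $|L'_j|=\mathrm{ord}(a_{N'+j})$ for all $j\geq 0$, hence $L'_j=\langle a_{N'+j}\rangle$, which is (iii) at $N'$. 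The main obstacle is the structural decomposition step: one must verify that the canonical splitting $L_k\cong T\times\insZ/p^{a+e_k}$ is compatible with all the projections $\lambda$, so that $(a_T,\alpha)$ is a well-defined coherent datum and the above order computation is legitimate.
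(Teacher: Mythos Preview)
Your argument is correct, and for the implications (i)$\Leftrightarrow$(ii)$\Leftrightarrow$(iii) and (iii)$\Rightarrow$(iv) it is essentially a reorganization of the paper's cycle (i)$\Rightarrow$(ii)$\Rightarrow$(iii)$\Rightarrow$(iv)$\Rightarrow$(i). The genuine difference lies in the hard direction (iv)$\Rightarrow$(iii). The paper does not analyze $\ker\lambda_n$ or pass to an inverse limit; instead it invokes Lemma~\ref{lemma:etaP} to write $\sigma_n:=\mathrm{ord}(\widetilde\pi_n(a))=p^{k(n)}d_n$ with $d_n\mid\eta(P)$, stabilizes the prime-to-$p$ part $d_n$, chooses $N$ large enough that $p\mid\sigma_N$ and the telescopic sequence of $\langle\widetilde\pi_N(a)\rangle$ is cyclic, and then uses the Euler-totient identity $\phi(p^{s(k)}\sigma_N)=p^{s(k)}\phi(\sigma_N)$ to conclude that \emph{every} lift of a generator of $L_0$ to $L_k$ is already a generator of $L_k$; in particular the image of $a$ generates each $L_k$.

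Your route---observing that $\ker\lambda_n$ is a quotient of $(R/P,+)$, hence of order $1$ or $p$ inside the cyclic $L_j$, forming $L_\infty\cong T\times\insZ_p$, and reading off the eventual constancy of $[L_k:\langle a_{N+k}\rangle]$ from $v_p(\alpha)$---is more structural and self-contained: it makes explicit why the ratio $|L_{k+1}|/|L_k|$ is a $p$-power (a fact the paper uses but leaves implicit in Lemma~\ref{lemma:etaP}) and it identifies the eventual index as $\frac{|T|}{\mathrm{ord}(a_T)}\,p^{v_p(\alpha)}$. The paper's approach is more elementary (no inverse limits) and dovetails with the invariant $\eta(P)$ used later; yours gives a cleaner conceptual picture of the obstruction. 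The compatibility of the Sylow splitting $L_k\cong T\times\insZ/p^{a+e_k}$ with the maps $\lambda$, which you flag as the main obstacle, is indeed automatic: the $p$-part and prime-to-$p$ part are characteristic subgroups, so any group homomorphism respects the decomposition, and the induced maps on the $T$-factors are surjections between cyclic groups of equal order, hence isomorphisms. Your parenthetical about ``the ratios $|K_{N'+i}|/d_{N'+i}$'' is a bit cryptic; it would be clearer simply to say that once $[L_k:\langle a_{N+k}\rangle]$ is constant, so is $[H_{N+k}(P):\langle a_{N+k}\rangle]$ (the telescopic index $[H_{N+k}(P):L_k]$ being constant by definition), and then the telescopic sequence of $\langle a_{N'}\rangle$ agrees with $\langle a_{N'+j}\rangle$ by comparison of indices.
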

\begin{proof}
\ref{prop:chius-powa-sgr:stab} $\Longrightarrow$ \ref{prop:chius-powa-sgr:sgr} If the chain stabilizes at $N$, that is, if $\widetilde{\pi}_N^{-1}(\langle \widetilde{\pi}_N(a)\rangle)=\widetilde{\pi}_{N+k}^{-1}(\langle \widetilde{\pi}_{N+k}(a)\rangle)$ for all $k\geq 0$, then $X=\widetilde{\pi}_N^{-1}(L)$ with $L:=\langle \widetilde{\pi}_N(a)\rangle$.

\ref{prop:chius-powa-sgr:sgr} $\Longrightarrow$ \ref{prop:chius-powa-sgr:tseq1} If $X=\widetilde{\pi}_N^{-1}(L)$, then $\widetilde{\pi}_N(X)=L$, and thus by Proposition \ref{prop:chiuspow}\ref{prop:chiuspow:princtors-pi} $L=\langle \widetilde{\pi}_N(a)\rangle$. We have a commutative diagram
\begin{equation*}
\begin{tikzcd}
X\arrow[equal]{d}\arrow{r}{\widetilde{\pi}_N} & L\\
X\arrow{r}{\widetilde{\pi}_{N+1}} & \lambda_N^{-1}(L)\arrow{u}{\lambda_N};
\end{tikzcd}
\end{equation*}
however, we also have $\widetilde{\pi}_{N+1}(X)=\langle\widetilde{\pi}_{N+1}(a)\rangle$, and thus the telescopic sequence of $L$ is formed by the subgroups is generated by (the image of) $a$ in the various $H_{N+k}(P)$.

\ref{prop:chius-powa-sgr:tseq1} $\Longrightarrow$ \ref{prop:chius-powa-sgr:tseq2} Since there are infinitely many $n$ such that $\lambda_n$ is not an isomorphism, the cardinality of $H_n(P)$ goes to infinity; since the index remains fixed among the elements of a telescopic sequence, it follows that the cardinality of the $\langle\widetilde{\pi}_n(a)\rangle$ is unbounded, as claimed.

\ref{prop:chius-powa-sgr:tseq2} $\Longrightarrow$ \ref{prop:chius-powa-sgr:stab} Let $\sigma_n$ be the order of $\widetilde{\pi}_n(a)$.

By Lemma \ref{lemma:etaP}, $\sigma_n=p^{k(n)}d_n$ for some $k(n)\geq 0$ and some $d_n$ dividing $\eta(P)$; since the sequence $\{\sigma_n\}_{n\inN}$ is unbounded, we can find $N'$ such that $d_{N'}=d_{N'+k}$ for every $k\geq 0$. Furthermore, by the hypothesis, we can find $N\geq N'$ such that every element of the telescopic sequence $\{L,L_1,\ldots,\}$ of $L:=\langle\widetilde{\pi}_N(a)\rangle$ is cyclic. We claim that each $L_i$ is generated by the image of $a$.

Indeed, by construction we have $|L_k|=p^{s(k)}|L|=p^{s(k)}\sigma_N$ for every $k\geq 0$ (and some nonnegative function $k\mapsto s(k)$). If $\phi$ be the Euler totient, the number of generators of $L_k$ is
\begin{equation*}
\phi(|L_k|)=\phi(\sigma_{N+k})=\phi(p^{s(k)}\sigma_N)=p^{s(k)}\phi(\sigma_N),
\end{equation*}
since $p|\sigma_N$. Hence, every generator of $L$ lifts to a generator of $L_k$; therefore, $\widetilde{\pi}_N^{-1}(\langle \widetilde{\pi}_N(a)\rangle)=\widetilde{\pi}_{N+k}^{-1}(\langle \widetilde{\pi}_{N+k}(a)\rangle)$ for all $k\geq 0$, as claimed.
\end{proof}

One problem in applying the previous proposition to the Golomb topology is that we don't know if the sets $\pow{a}$ are invariant with respect to homeomorphism of Golomb spaces. However, if $R,S$ are principal ideal domains, and $q\in R$ is a prime element (i.e., if $qR$ is a prime ideal) then $\pow{q}=G_{\{qR\}}(R)$, and thus by Proposition \ref{prop:GDelta} a homeomorphism $h:G(R)\longrightarrow G(S)$ carries $\pow{q}$ to $\pow{q'}$, for some prime element $q'$ of $S$. Therefore, it carries the closure of $\pow{q}$ in the $P$-topology to the closure of $\pow{q'}$ in the $h(P)$-topology.

More generally, suppose $R$ is a Dedekind domain with torsion class group. If $Q^t=qR$ is the smallest power of $Q$ that is a principal ideal, we say that $q$ is an \emph{almost prime} element; equivalently, an almost prime element is an irreducible element generating a primary ideal. In this case, we still have $\pow{q}=G_{Q}(R)$, since if $xR$ is a $Q$-primary ideal then $xR$ must be in the form $(Q^t)^k$ for some $k$. In particular, we must still have $h(\pow{q})=\pow{q'}$ for some almost prime element $q'$ of $S$. More precisely, the unique prime ideal containing $q'$ will be the image $h(Q)$ of $Q$, the only prime ideal containing $q$.

\begin{defin}
Let $P$ be a prime ideal. We define $\setclos{P}$ as the set of closures of $\pow{q}$, as $q$ ranges among the almost prime elements of $R$ outside $P$.
\end{defin}

The previous discussion shows the following.
\begin{prop}\label{prop:omef-XP}
Let $R,S$ be two Dedekind domains with torsion class group, and let $h:G(R)\longrightarrow G(S)$ be a homeomorphism. Then, the map
\begin{equation*}
\begin{aligned}
\overline{h}\colon\setclos{P} & \longrightarrow\setclos{h(P)},\\
X & \longmapsto h(X)
\end{aligned}
\end{equation*}
is an order isomorphism (when $\setclos{P}$ and $\setclos{h(P)}$ are endowed with the containment order).
\end{prop}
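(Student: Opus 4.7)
The plan is to reduce the statement to the two key invariance results already established: Proposition \ref{prop:GDelta}, which gives a bijection $\sigma : \Max(R) \to \Max(S)$ with $h(G_\Delta(R)) = G_{\sigma(\Delta)}(S)$, and Theorem \ref{teor:ptop}, which says that the restriction of $h$ to $R\setminus P$ is a homeomorphism onto $S\setminus h(P)$ for the respective $P$-topology and $h(P)$-topology. In particular, a homeomorphism in topologies of this kind must carry closures to closures.

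First I would verify well-definedness. Fix an almost prime $q\in R\setminus P$ and let $Q$ be the unique prime of $R$ containing $q$ (so $Q\neq P$ and $qR$ is $Q$-primary). The discussion immediately preceding the proposition shows that $\pow{q}=G_{\{Q\}}(R)$ and, via Proposition \ref{prop:GDelta}, that $h(\pow{q})=G_{\{\sigma(Q)\}}(S)$; since $S$ has torsion class group there is an almost prime $q'\in S$ with $q'S$ a power of $\sigma(Q)$, and then $G_{\{\sigma(Q)\}}(S)=\pow{q'}$. Moreover $\sigma(P)=h_e(P)$ (by looking at how $\sigma$ is extracted from $h$ in Proposition \ref{prop:GDelta}, together with Theorem \ref{teor:radical}\ref{teor:radical:primi} which ensures $h_e(P)$ is prime), so $\sigma(Q)\neq h(P)$ and $q'\notin h(P)$. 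Now applying Theorem \ref{teor:ptop} to the set $\pow{q}\subseteq R\setminus P$, the image of its closure in the $P$-topology is the closure of $\pow{q'}$ in the $h(P)$-topology. Hence $\overline{h}(X)\in\setclos{h(P)}$ whenever $X\in\setclos{P}$.

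For the order isomorphism statement, note that $X\mapsto h(X)$ preserves containment because $h$ is a function, and reflects containment because $h$ is injective. Applying the same argument to $h^{-1}\colon G(S)\to G(R)$ (which is also a homeomorphism of Golomb spaces, with $h^{-1}(h(P))=P$) yields a map $\setclos{h(P)}\to\setclos{P}$ of the same form, which is a two-sided inverse of $\overline{h}$ and also order-preserving. This makes $\overline{h}$ an order isomorphism.

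There is no serious obstacle: the two hard inputs (transporting the stratification by $G_\Delta$ and transporting the $P$-topology) are exactly the content of Proposition \ref{prop:GDelta} and Theorem \ref{teor:ptop}, and the only small point worth checking carefully is the identification $\sigma(P)=h_e(P)$, which ensures $\sigma$ sends almost primes outside $P$ to almost primes outside $h(P)$ and hence that the image of an element of $\setclos{P}$ really lands in $\setclos{h(P)}$ rather than in a wider set of closures.
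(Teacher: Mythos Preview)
Your proposal is correct and follows exactly the approach the paper takes: the paper's ``proof'' is just the discussion immediately preceding the proposition (identifying $\pow{q}=G_{\{Q\}}(R)$, invoking Proposition~\ref{prop:GDelta} to get $h(\pow{q})=\pow{q'}$, and then using Theorem~\ref{teor:ptop} to carry $P$-topology closures to $h(P)$-topology closures). You have simply written out the details more carefully, including the check that $\sigma(P)=h_e(P)$ and the explicit verification that $\overline{h}$ is an order isomorphism via $h^{-1}$, which the paper leaves implicit.
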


We are now interested in studying the order structure of $\setclos{P}$; since we also need to have plenty of almost prime elements, we introduce the following definition.
\begin{defin}
Let $R$ be a principal ideal domain and $P$ a prime ideal of $R$. We say that $R$ is \emph{Dirichlet at $P$} if, for every $a\in R\setminus P$ and every $n\geq 1$ the coset $a+P^n$ contains at least one almost prime element.
\end{defin}
For example, by Dirichlet's theorem on primes in arithmetic progressions (see e.g. \cite[Chapter 4]{davenport-analytic} or \cite[Chapter 7]{apostol}), $\insZ$ is Dirichlet at each of its primes. An equivalent condition is that the set of almost prime elements of $R$ is dense in $R\setminus P$ under the $P$-topology. Note that it is not known if a homeomorphism of Golomb spaces sends almost prime elements in almost prime elements, and thus this condition may not be a topological invariant.


We shall use the following terminology.
\begin{defin}
Let $R$ be a Dedekind domain, and let $P$ be a prime ideal of $R$. We say that $P$ is \emph{almost cyclic} if $R/P$ is finite and $H_n(P)$ is cyclic for every $n\geq 1$.
\end{defin}

Our next step is to link $\setclos{P}$ with the subgroups of the $H_n(P)$. We first show how to compare subgroups living in different $H_n(P)$.
\begin{lemma}\label{lemma:ug-subg}
Let $R$ be a Dedekind domain, and let $P$ be an almost cyclic prime ideal. Let $L$ and $L'$ be, respectively, subgroups of $H_n(P)$ and $H_m(P)$. Then, $\widetilde{\pi}_n^{-1}(L)\subseteq \widetilde{\pi}_m^{-1}(L')$ if and only if $[H_m(P):L']$ divides $[H_n(P):L]$; in particular, $\widetilde{\pi}_n^{-1}(L)=\widetilde{\pi}_m^{-1}(L')$ if and only if $[H_m(P):L']=[H_n(P):L]$.
\end{lemma}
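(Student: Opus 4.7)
The plan is to reduce the statement to comparing two subgroups inside a single $H_k(P)$ by transporting everything along the surjections $\lambda_j$ of Lemma \ref{lemma:mapHn}, and then invoke the fact that a finite cyclic group has a unique subgroup of each index. Without loss of generality I assume $n\leq m$; the opposite case is symmetric, pulling $L'$ back to $H_n(P)$ instead of pushing $L$ down to $H_m(P)$.

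First I would set $\Lambda:=\lambda_n\circ\lambda_{n+1}\circ\cdots\circ\lambda_{m-1}\colon H_m(P)\to H_n(P)$. Iterating the commutative square of Lemma \ref{lemma:mapHn} gives $\widetilde{\pi}_n=\Lambda\circ\widetilde{\pi}_m$, hence
\begin{equation*}
\widetilde{\pi}_n^{-1}(L)=\widetilde{\pi}_m^{-1}(\Lambda^{-1}(L)).
\end{equation*}
Since $\pi_m$ maps $R\setminus P$ onto $U(R/P^m)$ and $\theta_m$ is a surjective homomorphism, $\widetilde{\pi}_m$ is surjective; therefore containment of preimages is equivalent to containment downstairs:
\begin{equation*}
\widetilde{\pi}_n^{-1}(L)\subseteq\widetilde{\pi}_m^{-1}(L')\iff\Lambda^{-1}(L)\subseteq L'.
\end{equation*}
This places the entire question inside the single cyclic group $H_m(P)$.

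Next, because $P$ is almost cyclic, $H_m(P)$ is finite cyclic, so for any two subgroups $A,B$ of $H_m(P)$ one has $A\subseteq B$ if and only if $[H_m(P):B]$ divides $[H_m(P):A]$. Applying this with $A=\Lambda^{-1}(L)$ and $B=L'$, and using that $\Lambda$ is a surjective homomorphism so that $[H_m(P):\Lambda^{-1}(L)]=[H_n(P):L]$, produces exactly the claimed divisibility criterion. The ``in particular'' part follows by running the equivalence in both directions, which forces each of the two indices to divide the other.

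The argument is essentially formal, and I do not anticipate a serious obstacle; the two pieces of bookkeeping that need a moment of care are the index identity $[H_m(P):\Lambda^{-1}(L)]=[H_n(P):L]$ (which relies on surjectivity of $\Lambda$) and making sure the cyclicity hypothesis is applied in the ambient group in which both subgroups are simultaneously realized, namely $H_m(P)$.
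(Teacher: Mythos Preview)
Your argument is correct and follows essentially the same route as the paper: reduce to a single ambient group via the canonical surjection between the $H_k(P)$, and then invoke the index criterion for containment of subgroups in a finite cyclic group. Your write-up is in fact a bit more explicit than the paper's in justifying the step from $\widetilde{\pi}_n^{-1}(L)\subseteq\widetilde{\pi}_m^{-1}(L')$ to the containment $\Lambda^{-1}(L)\subseteq L'$ via surjectivity of $\widetilde{\pi}_m$.
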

\begin{proof}
Without loss of generality, suppose $m\geq n$, and let $\lambda:H_n(P)\longrightarrow H_m(P)$ be the canonical surjective map. Then, $\lambda(L)$ is a subgroup of $H_m(P)$ of index $[H_n(P):L]$; since $H_m(P)$ is cyclic, we have $\lambda(L)\subseteq L'$ if and only if $[H_n(P):L]$ is a multiple of $[H_m(P):L']$, as claimed.

The ``in particular'' part follows immediately.
\end{proof}

\begin{prop}\label{prop:caratt-XP}
Let $R$ be a Dedekind domain with torsion class group, and let $P$ be an almost cyclic prime ideal. Then, the following hold.
\begin{enumerate}[(a)]
\item\label{prop:caratt-XP:->sgr} Let $X$ be the closure of $\pow{q}$ in the $P$-topology. If $\pow{q}$ is disjoint from the closure of $U(R)$ (with respect to the $P$-topology), then there is an $n\geq 1$ and a subgroup $H$ of $H_n(P)$ such that $X=\widetilde{\pi}_n^{-1}(H)$.
\item\label{prop:caratt-XP:sgr->} If $R$ is Dirichlet at $P$, then $\widetilde{\pi}_n^{-1}(H)\in\mathcal{X}(P)$ for every subgroup $H$ of $H_n(P)$.
\end{enumerate}
\end{prop}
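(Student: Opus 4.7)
My plan is to split both parts according to whether the connecting maps $\lambda_m\colon H_{m+1}(P)\to H_m(P)$ are isomorphisms for all sufficiently large $m$ (``Case A'') or fail to be isomorphisms for infinitely many $m$ (``Case B''). Since $P$ is almost cyclic, every telescopic sequence is automatically cyclic, so condition (iv) of Proposition \ref{prop:chius-powa-sgr} reduces to the assertion that the orders of $\widetilde{\pi}_m(q)$ tend to infinity, while Proposition \ref{prop:chius-powa-sgr-bounded} applies unconditionally in Case A.

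For part \ref{prop:caratt-XP:->sgr}, Case A gives $X=\widetilde{\pi}_N^{-1}(L)$ for free from Proposition \ref{prop:chius-powa-sgr-bounded}. In Case B I would invoke the disjointness hypothesis: $\pow{q}$ disjoint from the closure of $U(R)$ is equivalent to saying that for every $t\geq 1$ some $\widetilde{\pi}_m(q)^t$ is nontrivial; since the orders of the $\widetilde{\pi}_m(q)$ form a divisibility-monotone sequence, they must then tend to infinity, and Proposition \ref{prop:chius-powa-sgr}(iv) yields the desired form.

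For part \ref{prop:caratt-XP:sgr->}, set $d:=[H_n(P):H]$ and let $H^{(m)}$ denote the unique index-$d$ subgroup of $H_m(P)$, well defined for $m\geq n$ since $H_m(P)$ is cyclic and $d\mid|H_m(P)|$. In both cases the strategy is to pick a suitable $N\geq n$, take a generator $g$ of $H^{(N)}$, lift it to some $a\in R\setminus P$ via the surjectivity of $\widetilde{\pi}_N$, and use the Dirichlet hypothesis to replace $a$ by an almost prime $q\in a+P^N$. In Case A one fixes any $N$ past the stabilization of the $\lambda_m$, and Proposition \ref{prop:chius-powa-sgr-bounded} directly gives that the closure of $\pow{q}$ is $\widetilde{\pi}_N^{-1}(\langle g\rangle)=\widetilde{\pi}_n^{-1}(H)$.

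The main obstacle is Case B of part \ref{prop:caratt-XP:sgr->}: a single Dirichlet choice at level $N$ must produce a $q$ for which $\widetilde{\pi}_m(q)$ generates $H^{(m)}$ at every subsequent level. I would take $N$ large enough that $p:=\car(R/P)$ divides $|H^{(N)}|$ and that the non-$p$-component of $|H^{(m)}|$ is constant for $m\geq N$; both are possible by Lemma \ref{lemma:etaP} together with the Case B assumption that $|H_m(P)|\to\infty$. The counting argument at the heart of the proof of Proposition \ref{prop:chius-powa-sgr}(iv)$\Rightarrow$(i) then applies: writing $|H^{(m)}|=p^{s(m)}|H^{(N)}|$, the number of generators of $H^{(m)}$ is $\phi(|H^{(m)}|)=p^{s(m)}\phi(|H^{(N)}|)$, which equals the total number of elements of $H^{(m)}$ lying over a generator of $H^{(N)}$; hence every such lift is itself a generator of $H^{(m)}$. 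Applied to $\widetilde{\pi}_N(q)=g$, this shows $\widetilde{\pi}_m(q)$ generates $H^{(m)}$ for every $m\geq N$, so the descending chain $\{\widetilde{\pi}_m^{-1}(\langle\widetilde{\pi}_m(q)\rangle)\}_m$ stabilizes at $\widetilde{\pi}_N^{-1}(H^{(N)})=\widetilde{\pi}_n^{-1}(H)$ by Lemma \ref{lemma:ug-subg}, and Proposition \ref{prop:chiuspow}\ref{prop:chiuspow:princtors-Hn} identifies this intersection with the closure of $\pow{q}$.
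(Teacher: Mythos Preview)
Your proposal is correct and follows essentially the same approach as the paper's own proof: both split into the bounded and unbounded cases, use the disjointness of $\pow{q}$ from $\overline{U(R)}$ to force the orders of $\widetilde{\pi}_m(q)$ to be unbounded in part \ref{prop:caratt-XP:->sgr}, and for part \ref{prop:caratt-XP:sgr->} lift a generator at a level $N$ where $p$ divides the subgroup's cardinality and invoke the Euler-$\phi$ counting argument from the proof of Proposition \ref{prop:chius-powa-sgr} to show this lift remains a generator at every higher level. Your use of Lemma \ref{lemma:ug-subg} to identify $\widetilde{\pi}_N^{-1}(H^{(N)})$ with $\widetilde{\pi}_n^{-1}(H)$ is exactly what the paper does implicitly via the telescopic sequence.
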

\begin{proof}
Let $p$ be the characteristic of $R/P$.

\ref{prop:caratt-XP:->sgr} If the cardinality of the $H_n(P)$ is bounded, the claim follows from Proposition \ref{prop:chius-powa-sgr-bounded}.

If the cardinality is unbounded, let $q$ be an almost prime element such that $X$ is the closure of $\pow{q}$, and let $\sigma_n$ be the order of $\widetilde{\pi}_n(q)$. Suppose that $\{\sigma_n\}_{n\inN}$ is bounded, and let $\sigma$ be its maximum; then, $\widetilde{\pi}_n(q)^\sigma$ is the identity in $H_n(P)$ for all $n$, i.e., $\pi_n(q)^\sigma\in U(R)+P^n$ for every $n$. However, this implies that $q^\sigma$ is in the closure of $U(R)$ in the $P$-topology, a contradiction. Therefore, $\sigma_n$ becomes arbitrary large and the claim follows from Proposition \ref{prop:chius-powa-sgr}.

\ref{prop:caratt-XP:sgr->} If the cardinality of the $H_n(P)$ is bounded, then there is an $a\in R\setminus P$ and an $N$ such that $X:=\widetilde{\pi}_N^{-1}(H)$ is the closure of $\pow{a}$ in the $P$-topology; since $R$ is Dirichlet at $P$ there is an almost prime element $q\in a+P^N$, and $X$ is the closure of $\pow{q}$ in the $P$-topology, as claimed.

Suppose that the cardinality of the $H_n(P)$ is not bounded. Let $N\geq n$ be big enough such that the non-$p$-component of $H_N(P)$ has cardinality $\eta(P)$, and choose $k>N$ such that $H_k(P)>H_N(P)$. Let $L$ be the element of the telescopic sequence of $H$ that is contained in $H_k(P)$. Then, $L$ is cyclic, and thus there is an $a\in R\setminus P$ such that $\widetilde{\pi}_k(a)$ generates $L$; as in the proof of Proposition \ref{prop:chius-powa-sgr}, the fact that $p$ divides the cardinality of $L$ implies that every element of the telescopic sequence of $L$ is generated by the image of $a$. Since $R$ is Dirichlet at $P$, we can find an almost prime element $q\in a+P^k$; then, $X$ is the closure of $\pow{q}$, and in particular $X\in\setclos{P}$, as claimed.
\end{proof}

\begin{cor}\label{cor:RmenP-almcyc}
Let $R$ be a Dedekind domain with torsion class group, and let $P$ be a prime ideal of $R$. Suppose that $U(R)$ is closed in the $P$-topology. Then, the following hold.
\begin{enumerate}[(a)]
\item If $R\setminus P\in\setclos{P}$, then $P$ is almost cyclic.
\item If $R$ is Dirichlet at $P$ and $P$ is almost cyclic, then $R\setminus P\in\setclos{P}$.
\end{enumerate}
\end{cor}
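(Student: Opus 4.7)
I treat the two parts of the corollary separately.

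Part (b) follows directly from Proposition~\ref{prop:caratt-XP}\ref{prop:caratt-XP:sgr->}. Under the hypotheses that $R$ is Dirichlet at $P$ and that $P$ is almost cyclic, applying that proposition with $n=1$ and the full subgroup $H=H_1(P)$ gives $\widetilde{\pi}_1^{-1}(H_1(P))=R\setminus P\in\setclos{P}$.

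For part (a), take an almost prime $q\notin P$ with $\overline{\pow{q}}=R\setminus P$. By Proposition~\ref{prop:chiusPtop}, this rewrites as $R\setminus P=\bigcap_{n\geq 1}\pi_n^{-1}(\pi_n(\pow{q}))$, forcing $\pi_n(\pow{q})=U(R/P^n)$ for every $n\geq 1$. Since $\pi_n(\pow{q})=\pi_n(U(R))\cdot\{\pi_n(q)^t:t\geq 1\}$, quotienting by $\pi_n(U(R))$ yields $H_n(P)=\{\widetilde{\pi}_n(q)^t:t\geq 1\}$. A semigroup of positive powers of a single element equals the whole ambient group only if that element has finite order, so each $H_n(P)$ is cyclic of some finite order $d_n$, with $\widetilde{\pi}_n(q)$ a generator. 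Moreover $d_n\to\infty$: if the $d_n$ were eventually equal to some $d$, then $q^d\in U(R)+P^n$ for every $n$; combining Proposition~\ref{prop:chiusPtop} applied to $\pow{1}=U(R)$ with the closedness of $U(R)$ gives $\bigcap_n(U(R)+P^n)\cap(R\setminus P)=U(R)$, so $q^d$ would be a unit, contradicting that $q$ is almost prime.

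It remains to show $R/P$ is finite, and this is the \textbf{main obstacle}. I plan to invoke Lemma~\ref{lemma:almcyc-cardRP}, which requires upgrading the closedness of $U(R)$ to discreteness in the $P$-topology. The key is to exploit the kernel of each surjection $H_{n+1}(P)\twoheadrightarrow H_n(P)$, which is a cyclic quotient of the additive group $(R/P,+)$ coming from the identification of the kernel of $U(R/P^{n+1})\twoheadrightarrow U(R/P^n)$ with $1+P^n/P^{n+1}\cong R/P$. In characteristic zero, $(R/P,+)$ is a divisible group and has no nontrivial cyclic quotient, which would force $d_{n+1}=d_n$ and contradict $d_n\to\infty$; so $R/P$ must have positive characteristic $p$, and each kernel has order dividing $p$. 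Combining this structural constraint with closedness of $U(R)$ should produce an $n$ such that $U(R)\cap(1+P^n)=\{1\}$, i.e., $1$ is isolated in $U(R)$ and hence $U(R)$ is discrete by translation. Then Lemma~\ref{lemma:almcyc-cardRP} gives $|R/P|$ prime, in particular finite, and combined with the already-established cyclicity of all $H_n(P)$ we conclude that $P$ is almost cyclic.
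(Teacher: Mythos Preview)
For part~(b) and for the cyclicity portion of part~(a), your argument is essentially the paper's: the paper also takes an almost prime $q$ whose closure is $R\setminus P$ and concludes via Proposition~\ref{prop:chiuspow} that each $H_n(P)$ is generated by the image of $q$, hence cyclic. Your version is more careful in first extracting $\pi_n(\pow{q})=U(R/P^n)$ from Proposition~\ref{prop:chiusPtop} and then noting that a semigroup of positive powers can equal the ambient group only when the element is torsion, but the underlying idea is the same.

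The divergence concerns the finiteness of $R/P$, which is part of the definition of ``almost cyclic''. The paper's proof does not address this at all; it stops after establishing cyclicity. You correctly identify the issue and go further. Your reduction to positive characteristic is valid: each $\ker\lambda_n$ is a finite cyclic quotient of $(R/P,+)$, and in characteristic zero this group is divisible, so every $\lambda_n$ would be an isomorphism, contradicting the unboundedness of $d_n$ that you derived from the closedness of $U(R)$.

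The genuine gap is the final step. To invoke Lemma~\ref{lemma:almcyc-cardRP} you need $U(R)$ \emph{discrete} in the $P$-topology, but the hypothesis only gives $U(R)$ \emph{closed}. The sentence ``combining this structural constraint with closedness of $U(R)$ should produce an $n$ such that $U(R)\cap(1+P^n)=\{1\}$'' is not an argument. Nothing you have shown excludes an infinite residue field of characteristic $p$ in which $\pi_n(U(R))$ sits with finite index in $U(R/P^n)$ for every $n$ while $U(R)\cap(1+P^n)$ remains nontrivial for all $n$; the constraints $d_{n+1}/d_n\in\{1,p\}$ do not bridge this. So your plan for part~(a) is incomplete at exactly the point you flagged --- though it already goes further than the paper, which simply omits the finiteness question.
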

\begin{proof}
If $R\setminus P\in\setclos{P}$, then there is an almost prime element $q$ such that $R\setminus P$ is the closure of $\pow{q}$. By Proposition \ref{prop:chiuspow}, each $H_n(P)$ is generated by the image of $q$, and in particular they are all cyclic.

Conversely, suppose $P$ is almost cyclic. By Proposition \ref{prop:caratt-XP}\ref{prop:caratt-XP:sgr->}, $\widetilde{\pi}_n^{-1}(H)\in\setclos{P}$ for every subgroup of the $H_n(P)$; in particular, this holds for $H=H_n(P)$, for which we have $\widetilde{\pi}_n^{-1}(H)=R\setminus P$.
\end{proof}

\begin{cor}\label{cor:almcyc-topinv}
Let $R,R'$ be Dedekind domains with torsion class group and let $P$ be a prime ideal of $R$; suppose that $U(R)$ is closed in the $P$-topology. Let $h:G(R)\longrightarrow G(R')$ be a homeomorphism and let $P':=h(P)$.
\begin{enumerate}[(a)]
\item If $R$ is Dirichlet at $P$ and $P$ is almost cyclic then $P'$ is almost cyclic.
\item If also $R'$ is Dirichlet at $P'$, then $P$ is almost cyclic if and only if $P'$ is almost cyclic.
\end{enumerate}
\end{cor}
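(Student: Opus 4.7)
The plan is to bootstrap Corollary \ref{cor:RmenP-almcyc}, which characterizes almost cyclicity of $P$ by the membership $R\setminus P\in\setclos{P}$, via the topological invariance of $\setclos{P}$ supplied by Proposition \ref{prop:omef-XP}.

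First I would establish a preliminary transfer: the hypothesis ``$U(R)$ closed in the $P$-topology'' implies ``$U(R')$ closed in the $P'$-topology''. Indeed, by \cite[Theorem 13]{clark-golomb} applied both to $h$ and to $h^{-1}$ we have $h(U(R))=U(R')$, and by Theorem \ref{teor:ptop} the restriction of $h$ is a homeomorphism between $R\setminus P$ (with the $P$-topology) and $R'\setminus P'$ (with the $P'$-topology); hence the image of the closed set $U(R)$ is the closed set $U(R')$. For part (a), Corollary \ref{cor:RmenP-almcyc}(b) applied to $R$ at $P$ yields $R\setminus P\in\setclos{P}$ from the Dirichlet and almost cyclic hypotheses. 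By Proposition \ref{prop:omef-XP} the map $\overline{h}\colon\setclos{P}\to\setclos{P'}$ is an order isomorphism sending $R\setminus P$ to $h(R\setminus P)=R'\setminus P'$, so $R'\setminus P'\in\setclos{P'}$; then Corollary \ref{cor:RmenP-almcyc}(a) applied to $R'$ at $P'$ (legitimate thanks to the preliminary transfer) gives that $P'$ is almost cyclic.

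Part (b) then follows by symmetry. One implication is (a). For the reverse, $h^{-1}\colon G(R')\to G(R)$ is a homeomorphism with $h^{-1}(P')=P$, and the hypotheses ``Dirichlet at $P'$'' and ``$U(R')$ closed in the $P'$-topology'' (the latter obtained in the preliminary step) let us apply (a) to $h^{-1}$, deducing that $P'$ almost cyclic forces $P$ almost cyclic. No real obstacle is expected; the only point demanding care is checking that the closure hypothesis on units passes across $h$, which is exactly what Theorem \ref{teor:ptop} combined with the unit-preservation of \cite[Theorem 13]{clark-golomb} delivers.
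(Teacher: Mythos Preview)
Your proof is correct and follows essentially the same route as the paper: use Corollary~\ref{cor:RmenP-almcyc}(b) to obtain $R\setminus P\in\setclos{P}$, transport this to $\setclos{P'}$ via Proposition~\ref{prop:omef-XP}, and conclude with Corollary~\ref{cor:RmenP-almcyc}(a); part~(b) then follows by applying~(a) to $h^{-1}$. You are in fact slightly more careful than the paper in explicitly verifying that $U(R')$ is closed in the $P'$-topology (needed to invoke Corollary~\ref{cor:RmenP-almcyc}(a) on the $R'$ side), which the paper leaves implicit.
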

\begin{proof}
If $R$ is Dirichlet at $P$ and $P$ is almost cyclic, then by Corollary \ref{cor:RmenP-almcyc} $R\setminus P\in\setclos{P}$; hence, $R'\setminus P'=h(R\setminus P)\in h(\setclos{P})=\setclos{P'}$. Applying again the corollary we see that $P'$ is almost cyclic.

The second part follows by considering the inverse $h^{-1}:G(R')\longrightarrow G(R)$.
\end{proof}

Set now
\begin{equation*}
\setdiv{P}:=\{d\inN\mid d\text{~divides~}H_n(P)\text{~for some~}n\};
\end{equation*}
then, $\setdiv{P}$ has a natural order structure given by the divisibility relation (i.e., $a\leq b$ if and only if $a|b$). From a structural point of view, the previous proposition implies the following result.

\begin{figure}
\caption{The structure of $\setdiv{p\insZ}$ for $p=41$. In this case, $\eta(p\insZ)=20=2^2\cdot 5$.}
\begin{equation*}
\begin{tikzcd}[every arrow/.append style={dash},column sep=tiny]
& & & & & & & & & 20p^2\\
& & & & & 20p\arpp & & & 4p^2\arrow{ur} & 10p^2\arrow{u}\\
& 20\arpp & & & 4p\arrow{ur}\arpp & 10p\arrow{u}\arpp & & & 2p^2\arrow{u}\arrow{ur} & & 5p^2\arrow{ul} & & & \cdots\\
4\arrow{ur}\arpp & 10\arrow{u}\arpp & & & 2p\arrow{u}\arrow{ur}\arpp & & 5p\arrow{ul}\arpp & & & p^2\arrow{ur}\arrow{ul}\arrow{urrrr}\\
2\arrow{u}\arrow{ur}\arpp & & 5\arrow{ul}\arpp & & & p\arrow{ur}\arrow{ul}\arrow{urrrr}\\
& & & 1\arrow{ulll}\arrow{ul}\arrow{urr}
\end{tikzcd}
\end{equation*}
\end{figure}

\begin{teor}\label{teor:corresp-Theta}
Let $R$ be a Dedekind domain with torsion class group, $P$ an almost cyclic prime ideals, and suppose that $U(R)$ is closed in the $P$-topology. Let $\Theta$ be the map
\begin{equation*}
\begin{aligned}
\Theta\colon\setclos{P} & \longrightarrow\setdiv{P},\\
X=\widetilde{\pi}_n^{-1}(L) & \longmapsto[H_n(P):H].
\end{aligned}
\end{equation*}
Then, the following hold.
\begin{enumerate}[(a)]
\item $\Theta$ is well-defined, injective and order-reversing.
\item If $R$ is Dirichlet at $P$, then $\Theta$ is surjective, and thus $\Theta$ is an order-reversing isomorphism.
\end{enumerate}
\end{teor}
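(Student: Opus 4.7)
My plan is to obtain all four assertions by reducing them to Proposition \ref{prop:caratt-XP} and Lemma \ref{lemma:ug-subg}. The definition of $\Theta$ implicitly depends on a choice (a pair $(n,L)$ representing a given $X$), so well-definedness has two parts: first, that at least one such representation exists for every $X\in\setclos{P}$, and second, that the assigned index does not depend on the chosen representation.

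For existence, I would invoke Proposition \ref{prop:caratt-XP}\ref{prop:caratt-XP:->sgr}. Its hypothesis requires that $\pow{q}$ be disjoint from the closure of $U(R)$ in the $P$-topology. Since we assume $U(R)$ is already closed in the $P$-topology, that closure equals $U(R)$ itself; and any almost prime element $q$ is irreducible, hence not a unit, so every element $uq^t$ with $t\geq 1$ fails to be a unit. Thus every $X\in\setclos{P}$ admits a presentation $X=\widetilde{\pi}_n^{-1}(L)$ for some subgroup $L$ of $H_n(P)$. That the index $[H_n(P):L]$ is independent of this presentation follows from the ``in particular'' half of Lemma \ref{lemma:ug-subg}: if $\widetilde{\pi}_n^{-1}(L)=\widetilde{\pi}_m^{-1}(L')$, then $[H_n(P):L]=[H_m(P):L']$. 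The same lemma delivers injectivity (equal indices force equal preimages) and the order-reversing property (if $X\subseteq Y$ with $X=\widetilde{\pi}_n^{-1}(L)$ and $Y=\widetilde{\pi}_m^{-1}(L')$, then $[H_m(P):L']$ divides $[H_n(P):L]$, i.e., $\Theta(Y)\leq\Theta(X)$ in the divisibility order on $\setdiv{P}$).

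For surjectivity under the Dirichlet hypothesis, given $d\in\setdiv{P}$ there is some $n$ with $d\mid|H_n(P)|$. Because $P$ is almost cyclic, $H_n(P)$ is cyclic and therefore contains a unique subgroup $L$ of index $d$. Proposition \ref{prop:caratt-XP}\ref{prop:caratt-XP:sgr->} then produces an almost prime $q\notin P$ whose closure is $\widetilde{\pi}_n^{-1}(L)$, so $\widetilde{\pi}_n^{-1}(L)\in\setclos{P}$ with $\Theta\bigl(\widetilde{\pi}_n^{-1}(L)\bigr)=d$. Combined with (a), this gives an order-reversing bijection.

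The only real subtlety I expect is the well-definedness step, which hinges on correctly feeding the ``$\pow{q}$ disjoint from $\overline{U(R)}$'' hypothesis of Proposition \ref{prop:caratt-XP}\ref{prop:caratt-XP:->sgr}; once that is unpacked from the standing assumption that $U(R)$ is closed and the observation that almost primes are non-units, everything else is formal bookkeeping with Lemma \ref{lemma:ug-subg} and the cyclicity of the $H_n(P)$.
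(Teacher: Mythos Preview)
Your proof is correct and follows essentially the same route as the paper: both arguments feed the closedness of $U(R)$ into Proposition~\ref{prop:caratt-XP}\ref{prop:caratt-XP:->sgr} to get a representation $X=\widetilde{\pi}_n^{-1}(L)$, then use Lemma~\ref{lemma:ug-subg} for well-definedness, injectivity, and the order-reversing property, and finally invoke Proposition~\ref{prop:caratt-XP}\ref{prop:caratt-XP:sgr->} (plus cyclicity of $H_n(P)$ to find a subgroup of the right index) for surjectivity. Your write-up is in fact slightly more explicit than the paper's in unpacking why $\pow{q}\cap U(R)=\emptyset$ and in constructing the index-$d$ subgroup, but the underlying argument is the same.
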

\begin{proof}
Since $U(R)$ is closed in the $P$-topology, and every $\pow{q}$ is disjoint from $U(R)$, by Proposition \ref{prop:caratt-XP}\ref{prop:caratt-XP:->sgr} every $X\in\setclos{P}$ is in the form $\widetilde{\pi}_n^{-1}(L)$; by Lemma \ref{lemma:ug-subg}, if it is also equal to $\widetilde{\pi}_{n'}^{-1}(L')$ then the index of $L$ and $L'$ are the same, and thus $\Theta$ is well-defined. The same Lemma \ref{lemma:ug-subg} implies also that $\Theta$ is injective and order-reversing.

If $R$ is Dirichlet at $P$, we can apply Proposition \ref{prop:caratt-XP}\ref{prop:caratt-XP:sgr->}, and thus $\Theta$ is also surjective. It follows that $\Theta$ is an order-reversing isomorphism.
\end{proof}

The previous theorem implies that, under good hypothesis, the structure of $\setdiv{P}$ is a topological invariant of the Golomb topology; in particular, if $h:G(R)\longrightarrow G(S)$ is a homeomorphism, then Proposition \ref{prop:omef-XP} can be extended to a chain of bijections
\begin{equation}\label{eq:chain}
\setdiv{P}\xrightarrow{\Theta_P^{-1}}\mathcal{X}(P)\xrightarrow{~~\overline{h}~~}\mathcal{X}(h(P))\xrightarrow{\Theta_{h(P)}}\setdiv{h(P)}
\end{equation}
whose composition gives an order isomorphism between $\setdiv{P}$ and $\setdiv{h(P)}$.

We shall use the following shorthand.
\begin{defin}
Let $z,z'\inN$, and let $z=p_1^{e_1}\cdots p_k^{e_k}$ and $z'=q_1^{f_1}\cdots q_r^{f_r}$ be their factorizations. We say that $z$ and $z'$ \emph{have the same factorization structure} if $k=r$ and, after a permutation, $e_i=f_i$ for every $i$.
\end{defin}

\begin{prop}\label{prop:DP}
Let $R,R'$ be two Dedekind domain with torsion class group, and suppose there is a homeomorphism $h:G(R)\longrightarrow G(R')$. Let $P$ be an almost cyclic prime ideal of $R$, and let $P':=h(P)$; suppose that $R'/P'$ is finite, that $U(R)$ is closed in the $P$-topology, that $R$ is Dirichlet at $P$ and that $R'$ is Dirichlet at $P'$. Then, the following hold.
\begin{enumerate}[(a)]
\item The sequence $\{|H_n(P)|\}_{n\inN}$ is bounded if and only if $\{|H_n(P')|\}_{n\inN}$ is bounded.
\item If $|H_n(P)|=z$ and $|H_n(P)|=z'$ for all $n\geq N$, then $z$ and $z'$ have the same factorization structure.
\item If $\{|H_n(P)|\}_{n\inN}$ and $\{|H_n(P')|\}_{n\inN}$ are unbounded, then $\eta(P)$ and $\eta(P')$ have the same factorization structure.
\end{enumerate}
\end{prop}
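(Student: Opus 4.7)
The plan is to combine Theorem \ref{teor:corresp-Theta} with the chain of bijections \eqref{eq:chain}: under the standing hypotheses (together with Corollary \ref{cor:almcyc-topinv}, which transfers almost-cyclicity from $P$ to $P'$), the hypotheses of Theorem \ref{teor:corresp-Theta} hold for both $P$ and $P'$, and so we obtain an order isomorphism $\Phi\colon\setdiv{P}\longrightarrow\setdiv{P'}$. Each of the three assertions will then be read off as an order-theoretic invariant of these divisor posets. For part (a), observe that since $\lambda_n$ is surjective (Lemma \ref{lemma:mapHn}), $|H_n(P)|$ divides $|H_{n+1}(P)|$, so the sequence $\{|H_n(P)|\}_{n\inN}$ is bounded if and only if it stabilizes at some $z$, which happens if and only if $\setdiv{P}$ admits a maximum (namely $z$). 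Having a maximum is preserved by $\Phi$, so boundedness on one side is equivalent to boundedness on the other.

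For part (b), once both sequences have stabilized at $z$ and $z'$, we have $\setdiv{P}=\{d:d\mid z\}$ and $\setdiv{P'}=\{d:d\mid z'\}$ as posets. Writing $z=p_1^{e_1}\cdots p_k^{e_k}$, the divisor poset decomposes as the product of finite chains of lengths $e_1+1,\ldots,e_k+1$, and similarly for $z'$. The classical fact that a finite product of finite chains determines the multiset of its chain lengths (up to reordering the factors) then forces $z$ and $z'$ to have the same factorization structure.

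For part (c), by Lemma \ref{lemma:etaP} the non-$p$-component of $H_n(P)$ stabilizes at $\eta(P)$, so unboundedness forces $|H_n(P)|=p^{a(n)}\eta(P)$ with $a(n)\to\infty$ for large $n$. Hence
\begin{equation*}
\setdiv{P}=\{p^k d\mid k\geq 0\text{ and }d\mid\eta(P)\},
\end{equation*}
which, as a poset, is the product of an infinite chain with the divisor poset of $\eta(P)$; analogously for $\setdiv{P'}$. The crucial step is to identify $p$ in purely order-theoretic terms: the elements $x\in\setdiv{P}$ for which $[1,x]$ is totally ordered are exactly the prime powers, hence the atoms and their chains of powers are recovered from the order structure, and among these chains only the chain of powers of $p$ is infinite (since the primes dividing $\eta(P)$ appear with bounded multiplicity). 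Therefore $\Phi(p)=p'$, and the sub-poset $\{x\in\setdiv{P}\mid p\not\leq x\}$, which equals the divisor poset of $\eta(P)$, is order-theoretically defined and mapped isomorphically by $\Phi$ onto the divisor poset of $\eta(P')$. Reapplying the product-of-chains argument from part (b) to this sub-poset yields that $\eta(P)$ and $\eta(P')$ have the same factorization structure.

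The main obstacle is the order-theoretic recognition of the characteristic prime $p$ inside $\setdiv{P}$ in part (c); once this is in place, parts (a), (b), and the reduction inside (c) all follow from the classification of finite distributive lattices as products of chains.
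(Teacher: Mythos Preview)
Your proposal is correct and follows essentially the same route as the paper: both arguments first assemble the order isomorphism $\setdiv{P}\simeq\setdiv{P'}$ from the chain \eqref{eq:chain} (using Corollary \ref{cor:almcyc-topinv} to transfer almost-cyclicity), and then read off (a)--(c) as order-theoretic invariants of the divisor posets, singling out the characteristic prime $p$ in (c) as the unique atom lying below an infinite chain of prime powers. The only point you leave implicit is that $U(R')$ is closed in the $P'$-topology, which is needed to apply Theorem \ref{teor:corresp-Theta} on the $P'$ side; this follows because $h$ restricts to a homeomorphism between the $P$- and $P'$-topologies (Theorem \ref{teor:ptop}) and $h(U(R))=U(R')$.
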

\begin{proof}
Since $h$ is a homeomorphism in the $P$-topology, $U(R')=h(U(R))$ is closed in the $P'$-topology; furthermore, by Corollary \ref{cor:almcyc-topinv}, $P'$ is almost cyclic. By Proposition \ref{prop:omef-XP}, there is an order isomorphism between $\setdiv{P}$ and $\setdiv{P'}$.

The sequence $\{|H_n(P)|\}_{n\inN}$ is bounded if and only if it is finite, which happens if and only if $\setdiv{P}$ is finite. Hence, $\{|H_n(P)|\}_{n\inN}$ is bounded if and only if $\{|H_n(P')|\}_{n\inN}$ is bounded.

If $|H_n(P)|=z$ for all large $n$, then $\setdiv{P}$ is just the set of divisors of $z$; in particular, the minimal elements of $\setdiv{P}\setminus\{1\}$ correspond to the distinct prime factors of $z$. Since the same happens for $\setdiv{P'}$, the number of distinct prime factors of $z$ and $z'$ is the same. Furthermore, the exponent of $p$ in $z$ is equal to the number of elements of $\setdiv{P}$ that are divisible only by $p$; hence, it depends only on the structure of $\setdiv{P}$, and thus it doesn't change passing from $\setdiv{P}$ to $\setdiv{P'}$.

In the same way, if $\{|H_n(P)|\}_{n\inN}$ is unbounded, the minimal elements of $\setdiv{P}$ correspond to $p$ (the cardinality of $R/P$) and the prime factors of $\eta(P)$; furthermore, $p$ is the unique minimal element such that there are infinitely many $x$ that are multiple of $p$ but of no other prime factor. Hence, in the chain of bijections \eqref{eq:chain} $p$ gets sent to $p'$, the cardinality of $R'/P'$. The divisors of $\eta(P)$ are the elements of $\setdiv{P}$ that are not divisible by $p$; hence, the divisors of $\eta(P)$ correspond to the divisors of $\eta(P')$. As in the previous case, this implies that $\eta(P)$ and $\eta(P')$ have the same factorization structure.
\end{proof}

\section{The correspondence at powers of $p$}\label{sect:Yk}

Proposition \ref{prop:DP} gives a very strong restrictions for the image of a prime ideal under a homeomorphism of Golomb spaces. For example, suppose $R=\insZ$. Then, every prime ideal is almost cyclic, and it is easy to see that
\begin{equation*}
\eta(p\insZ)=\begin{cases}
1 & \text{if~}p=2\\
\frac{p-1}{2} & \text{if~}p>2.
\end{cases}
\end{equation*}
The only prime ideals $p\insZ$ such that $\eta(p\insZ)=1$ (and so $\eta(p\insZ)$ has an empty factorization) are $2$ and $3$; it follows that, for every self-homeomorphism $h$ of $G(\insZ)$, $h(2\insZ)$ can be equal only to $2\insZ$ or $3\insZ$. Likewise, $\eta(5\insZ)=2$ is prime, and thus $h(5\insZ)$ must be equal to $(2q+1)\insZ$ for some prime number $q$ such that $2q+1$ is prime. 

In this section, we use a finer analysis of the structure of $\setdiv{P}$ to obtain even more. We concentrate on sets in the form
\begin{equation*}
Y_k(P):=\Theta^{-1}(\eta(P)p^k)
\end{equation*}
where $\Theta$ is the map of Theorem \ref{teor:corresp-Theta}.

\begin{prop}\label{prop:Yk}
Preserve the hypothesis and the notation of Proposition \ref{prop:DP}, and suppose that $\{|H_n(P)|\}_{n\inN}$ is unbounded; let $p:=|R/P|$ and $p':=|R'/P'|$ . Then, the following hold.
\begin{enumerate}[(a)]
\item Let $h^\star:=\Theta_{P'}\circ h\circ\Theta_P$. Then, $h^\star(\eta(P)p^k)=\eta(P')(p')^k$ for every $k\geq 0$.
\item $h(Y_k(P))=Y_k(P')$.
\end{enumerate}
\end{prop}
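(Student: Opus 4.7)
The plan is to exploit the fact that, under the hypotheses, the map $h^\star$ coming from the chain \eqref{eq:chain} is an order isomorphism between the divisibility posets $\setdiv{P}$ and $\setdiv{P'}$; since both are in fact lattices under divisibility (closed under $\gcd$ and $\mathrm{lcm}$), $h^\star$ is automatically a lattice isomorphism, preserving joins and meets. The unboundedness hypothesis, combined with Lemma \ref{lemma:etaP} and the fact that the non-$p$-component of $H_n(P)$ always has order dividing $\eta(P)$, forces $\setdiv{P}=\{dp^j\mid d\text{~divides~}\eta(P),\,j\geq 0\}$, and symmetrically $\setdiv{P'}=\{d(p')^j\mid d\text{~divides~}\eta(P'),\,j\geq 0\}$.

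For part (a), I would split $\eta(P)p^k=\eta(P)\vee p^k$ in the lattice, so that by join-preservation it suffices to prove the two identities $h^\star(\eta(P))=\eta(P')$ and $h^\star(p^k)=(p')^k$. The first identity is essentially contained in the proof of Proposition \ref{prop:DP}: the elements of $\setdiv{P}$ coprime with $p$ form precisely the (finite) divisor lattice of $\eta(P)$, $h^\star$ restricts to an order isomorphism from this sublattice onto the divisor lattice of $\eta(P')$, and an order isomorphism of finite lattices sends top to top. For the second identity, I would argue by induction on $k$, the base case $k=1$ being $h^\star(p)=p'$, also noted in the proof of Proposition \ref{prop:DP}. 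For the inductive step, I would characterize $p^k$ as the least element of $\setdiv{P}$ strictly above $p^{k-1}$ whose unique atom (i.e., cover of $1$) is $p$; this description is purely lattice-theoretic, so $h^\star$ sends it to the least element strictly above $(p')^{k-1}$ whose unique atom is $h^\star(p)=p'$, which is exactly $(p')^k$.

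The main obstacle is precisely this identification $h^\star(p^k)=(p')^k$: since $h^\star$ is only assumed to be an order isomorphism, one cannot appeal to multiplicativity, and one has to single out the powers of $p$ by a sufficiently rigid, lattice-invariant property. The characterization above (least element above $p^{k-1}$ whose only atom below is $p$) is robust enough to make the induction propagate, and it is the cleanest way I see to pin down the entire $p$-axis of $\setdiv{P}$.

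Part (b) is then a formal consequence. Unwinding \eqref{eq:chain}, the restriction of $h$ to $\setclos{P}\to\setclos{h(P)}$ furnished by Proposition \ref{prop:omef-XP} satisfies $h|_{\setclos{P}}=\Theta_{P'}^{-1}\circ h^\star\circ\Theta_P$; evaluating both sides at $Y_k(P)=\Theta_P^{-1}(\eta(P)p^k)$ and using part (a) gives
\begin{equation*}
h(Y_k(P))=\Theta_{P'}^{-1}(h^\star(\eta(P)p^k))=\Theta_{P'}^{-1}(\eta(P')(p')^k)=Y_k(P'),
\end{equation*}
as required.
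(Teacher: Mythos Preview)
Your argument is correct. The description $\setdiv{P}=\{dp^j\mid d\text{ divides }\eta(P),\ j\geq 0\}$ is right under the standing hypotheses, this set is indeed a sublattice of $(\insN,\mid)$, and your inductive characterization of $p^k$ as the least element strictly above $p^{k-1}$ whose only atom below is $p$ is sound and purely order-theoretic; part (b) is identical to the paper's derivation.

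For part (a) the paper takes a shorter path. Rather than proving the auxiliary identity $h^\star(p^k)=(p')^k$ and then recombining via the join $\eta(P)p^k=\eta(P)\vee p^k$, it observes directly that the set of multiples of $\eta(P)$ in $\setdiv{P}$ is exactly the chain $\eta(P)<\eta(P)p<\eta(P)p^2<\cdots$; since $h^\star$ sends $\eta(P)$ to $\eta(P')$, it restricts to an order isomorphism from this chain onto the chain $\eta(P')<\eta(P')p'<\cdots$, and an order isomorphism between two well-ordered chains of type $\omega$ is forced, giving $h^\star(\eta(P)p^k)=\eta(P')(p')^k$ immediately. Your approach, in exchange for a bit more work, yields the stronger intermediate fact $h^\star(p^k)=(p')^k$ (and hence, combined with the lattice structure, the full action of $h^\star$ on $\setdiv{P}$), which the paper never needs to isolate.
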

\begin{proof}
As we saw in the proof of Proposition \ref{prop:DP}, the maximal elements of $\setdiv{P}\setminus\{1\}$ correspond to $p$ and the prime factors of $\eta(P)$; moreover, $p$ is the unique minimal element of $\setdiv{P}\setminus\{1\}$ with infinitely many multiples that are not divisible by any other prime. Hence, $h^\star(p)=p'$. Furthermore, $\eta(P)$ is the largest element of $\setdiv{P}$ that is not a multiple in $p$, and thus $h^\star(\eta(P))$ is the largest element of $\setdiv{P'}$ that is not a multiple of $h^\star(p)=p'$; that is, $h^\star(\eta(P))=\eta(P')$.

Consider now the multiples of $\eta(P)$ in $\setdiv{P}$: they are all in the form $\eta(P)p^k$ for some $k\geq 0$. The map $h^\star$ restricts to an order isomorphism between the multiples of $\eta(P)$ and the multiples of $\eta(P')$; hence, it must be $h^\star(\eta(P)p^k)=\eta(P')(p')^k$, as claimed.

By turning \eqref{eq:chain} inside-out and using the previous part of the proof, we see that
\begin{equation*}
\begin{aligned}
h(Y_k(P))= & (\Theta_{P'}^{-1}\circ h^\star\circ\Theta_P)(Y_k(P))=\\
= & (\Theta_{P'}^{-1}\circ h^\star)(\eta(P)p^k)= \Theta_{P'}^{-1}(\eta(P')(p')^k)=Y_k(P').
\end{aligned}
\end{equation*}
The claim is proved.
\end{proof}

Proposition \ref{prop:Yk} is rather close to our hope that a homeomorphism sends cosets into cosets, since both $Y_k(P)$ and $Y_k(P')$ are union of cosets. Further improvements of this result hinge on the explicit determination of the sets $Y_k(P)$; however, this will depend closely on the actual structure of the prime ideals and the units of $R$, and in particular on the image of $U(R)$ in $R/P^n$.
\begin{prop}\label{prop:Yk-UR}
Let $R$ be a Dedekind domain with torsion class group, and let $P$ be an almost cyclic prime ideal; let $p:=|R/P|$. Suppose that $U(R)$ is finite. Then, the following hold.
\begin{enumerate}[(a)]
\item\label{prop:Yk-UR:gen} There are $m\geq 0$ and $t\geq 1$ such that, for every $N\geq m$, we have $Y_N(P)=U(R)+P^{N+t}$.
\item\label{prop:Yk-UR:cop} If $|U(R)|$ is coprime with $p$, then we can take $m=t=1$. Furthermore, in this case
\begin{equation*}
\eta(P)=|H_1(P)|=\frac{p-1}{|\pi_1(U(R))|}.
\end{equation*}
\end{enumerate}
\end{prop}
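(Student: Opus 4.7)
The plan is to identify $Y_k(P)$ explicitly by choosing the sharpest quotient $H_n(P)$ in which the relevant index-$\eta(P)p^k$ subgroup collapses to the identity.

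First I would set up the numerical invariants. Because $U(R)$ is finite and the $P$-topology (being the $P$-adic topology restricted to $R\setminus P$) is Hausdorff, $U(R)$ is a discrete subset, so there is an integer $m_0\geq 1$ for which $\pi_n|_{U(R)}$ is injective for every $n\geq m_0$. Applying Lemma \ref{lemma:almcyc-cardRP} (its hypotheses hold since $P$ is almost cyclic and $U(R)$ is discrete in the $P$-topology) then gives $|R/P|=p$ prime. Writing $|U(R)|=p^ab$ with $\gcd(b,p)=1$, a direct count in $U(R/P^n)$ yields
$$|H_n(P)|=\frac{p^{n-1}(p-1)}{p^ab}=p^{n-1-a}\cdot\frac{p-1}{b}\qquad\text{for all }n\geq m_0,$$
which forces $b\mid p-1$ and identifies $\eta(P)=(p-1)/b$.

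For part (a), I would use Lemma \ref{lemma:ug-subg} to observe that $Y_k(P)=\Theta^{-1}(\eta(P)p^k)=\widetilde{\pi}_n^{-1}(L)$ for any $n$ such that $H_n(P)$ admits a subgroup $L$ of index $\eta(P)p^k$; since $H_n(P)$ is cyclic, both the existence and the choice of such $L$ are forced. The sharp move is to take $n:=k+a+1$ whenever this value is $\geq m_0$: then $|H_n(P)|=\eta(P)p^k$, so $L=\{1\}$, and
$$Y_k(P)=\widetilde{\pi}_n^{-1}(\{1\})=\pi_n^{-1}(\pi_n(U(R)))=U(R)+P^{k+a+1}.$$
Setting $t:=a+1$ and $m:=\max\{0,\,m_0-a-1\}$ then establishes (a).

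For part (b), with $\gcd(|U(R)|,p)=1$ we have $a=0$, so $t=a+1=1$, and the one remaining point is that $m_0$ may be taken to be $1$, i.e.\ that $\pi_1|_{U(R)}$ is injective. If $u\in U(R)\cap(1+P)$ has order $d$, then $d$ divides $|U(R)|$ and is therefore coprime to $p$; the factorization $0=u^d-1=(u-1)(1+u+\cdots+u^{d-1})$ shows that the second factor is congruent to $d$ modulo $P$, hence a unit in $R/P$, which forces $u=1$. Thus $|\pi_1(U(R))|=|U(R)|$, so
$$\eta(P)=\frac{p-1}{b}=\frac{p-1}{|U(R)|}=\frac{p-1}{|\pi_1(U(R))|}=|H_1(P)|.$$
The only real obstacle in the whole argument is this elementary lemma on torsion units modulo $P$; once $m_0=1$ is secured, specialising the computation of (a) to $a=0$ gives $m=\max\{0,0\}=0\leq 1$ and $t=1$, so the choice $m=t=1$ works.
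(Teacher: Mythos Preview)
Your proof is correct and follows the same strategy as the paper: compute $|H_n(P)|$ explicitly for large $n$, then realize $Y_k(P)$ as the preimage of the trivial subgroup at the level $n$ where $|H_n(P)|=\eta(P)p^k$, giving $Y_k(P)=U(R)+P^{k+t}$. For part (b) the paper argues more structurally (via the non-$p$-component of $H_n(P)$) that $|\pi_n(U(R))|$ is constant in $n$, whereas you give a direct elementary factorization argument showing $\pi_1|_{U(R)}$ is already injective; this is slightly stronger but the route is essentially the same.
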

\begin{proof}
\ref{prop:Yk-UR:gen} By Lemma \ref{lemma:almcyc-cardRP}, the cardinality $p$ of $R/P$ is a prime number. 

Since $U(R)$ is finite, we can find $M'$ such that the kernel of the map $\widetilde{\pi}_n:U(R)\longrightarrow H_n(P)$ is equal to the kernel of $\widetilde{\pi}_{M'}$ for every $n\geq M'$. Furthermore, by Lemma \ref{lemma:etaP} there is an $M''$ such that $\eta(P)$ divides $|H_{M''}(P)|$. Take $M:=\max\{M',M''\}$; then, $|H_M(P)|=p^m\eta(P)$ for some $0\leq m<M$, and thus $|H_{M+k}(P)|=p^{m+k}\eta(P)$ for every $k\geq 0$.

By Theorem \ref{teor:corresp-Theta}, $Y_N(P)$ correspond to the subgroup of index $p^N\eta(P)$ in $H_k(P)$, for $k\gg 0$. If $N\geq m$, let $N:=m+k$; then, $|H_{M+k}(P)|=p^N\eta(P)$, and thus $Y_N(P)$ corresponds exactly to the identity subgroup of $H_{M+k}(P)$, i.e., $Y_N=U(R)+P^{M+k}$. However, $M+k=M+N-m$; setting $t:=M-m$ we have our claim.

\ref{prop:Yk-UR:cop} If the cardinality of $U(R)$ is coprime with $p$, then for every $n\geq 1$ the non-$p$-component of $H_1(P)$ and $H_n(P)$ are isomorphic, and the image of $U(R)$ in $H_1(P)$ and $H_n(P)$ is the same; in particular, $|\pi_n(U(R))|=|\pi_1(U(R))|$ and the formula holds.

In particular, with the notation of the previous part of the proof, we have $M'=M''=1$, $m=0$ and $t=1-0=1$. The claim is proved.
\end{proof}

We now restrict to the case $R=\insZ$; we first specialize the previous proposition.
\begin{prop}
Let $p$ be a prime number, and let $k\geq 0$. Then, the following hold.
\begin{enumerate}[(a)]
\item If $p=2$, then $Y_k(2\insZ)=(1+2^{k+2}\insZ)\cup(-1+2^{k+2}\insZ)$.
\item If $p>2$, then $Y_k(p\insZ)=(1+p^{k+1}\insZ)\cup(-1+p^{k+1}\insZ)$
\end{enumerate}
\end{prop}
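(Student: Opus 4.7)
The plan is to apply Proposition \ref{prop:Yk-UR} to $R=\insZ$, treating the two cases separately since $|U(\insZ)|=2$ is coprime with $p$ only when $p$ is odd.

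For $p>2$, I would invoke part \ref{prop:Yk-UR:cop} of Proposition \ref{prop:Yk-UR} directly: since $|U(\insZ)|=2$ and $\gcd(2,p)=1$, the proposition yields $m=0$ and $t=1$, hence
\begin{equation*}
Y_k(p\insZ)=U(\insZ)+p^{k+1}\insZ=(1+p^{k+1}\insZ)\cup(-1+p^{k+1}\insZ)
\end{equation*}
for every $k\geq 0$. (The formula $\eta(p\insZ)=(p-1)/2$, computed in the introduction to Section \ref{sect:Yk}, is consistent with this via $|H_1(p\insZ)|=(p-1)/2$.)

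For $p=2$, I would apply part \ref{prop:Yk-UR:gen} and make the numerics explicit. First compute $|U(\insZ/2^n\insZ)|=2^{n-1}$ and observe that $\pi_n(\{1,-1\})$ has cardinality $1$ when $n=1$ and $2$ when $n\geq 2$, so that $|H_n(2\insZ)|=2^{n-2}$ for all $n\geq 2$; in particular $\eta(2\insZ)=1$ since each $|H_n|$ is a $2$-power. Consequently $Y_k(2\insZ)=\Theta^{-1}(2^k)$, and choosing $n=k+2$ makes this the preimage under $\widetilde{\pi}_{k+2}$ of the trivial subgroup of $H_{k+2}(2\insZ)$. Unpacking the definition of $\widetilde{\pi}_{k+2}$ gives
\begin{equation*}
Y_k(2\insZ)=\{a\in\insZ\setminus 2\insZ\mid a\equiv\pm 1\pmod{2^{k+2}}\}=(1+2^{k+2}\insZ)\cup(-1+2^{k+2}\insZ),
\end{equation*}
which matches the values $m=0$, $t=2$ predicted by the proof of Proposition \ref{prop:Yk-UR}\ref{prop:Yk-UR:gen} (with $M=M''=2$, since $\eta(2\insZ)=1$ divides $|H_2(2\insZ)|=1$).

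Since both claims reduce to plugging explicit data about $\insZ$ into an already-proved proposition, there is no serious obstacle; the only thing to watch is the corner case $p=2$, where the noncoprimality of $|U(\insZ)|$ with $p$ forces the shift $t=2$ rather than $t=1$, explaining why the exponent of $2$ in the modulus is $k+2$ while for odd $p$ the exponent is just $k+1$.
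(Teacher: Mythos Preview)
Your proof is correct and follows the same route as the paper: both cases are obtained by specializing Proposition \ref{prop:Yk-UR} to $R=\insZ$, invoking part \ref{prop:Yk-UR:cop} for odd $p$ and computing $M$, $m$, $t$ explicitly for $p=2$. Your computation $t=M-m=2-0=2$ is in fact correct; the paper's own proof records ``$t=1$'' for $p=2$, which is a slip (since $t=M-m=2$ is what gives the exponent $k+2$), so your write-up is slightly more accurate on this point.
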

\begin{proof}
For $p>2$ the claim is exactly the one in Proposition \ref{prop:Yk-UR}\ref{prop:Yk-UR:cop}. For $p=2$, we can take $M=2$, so $m=0$, $t=1$ and thus $Y_k=\pm 1+2^{k+2}\insZ$, as claimed.
\end{proof}

A different way to express the previous proposition is the following.
\begin{prop}
Let $p$ be a prime number, $a$ an integer coprime with $p$, and $k\geq 0$. Then:
\begin{enumerate}[(a)]
\item if $a$ is even, then $a\in Y_k(p\insZ)$ if and only if $p^{k+1}$ divides $a^2-1$;
\item if $a$ is odd, then $a\in Y_k(p\insZ)$ if and only if $p^{k+1}$ divides $\frac{a^2-1}{4}$.
\end{enumerate}
\end{prop}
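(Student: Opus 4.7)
The plan is to deduce this statement directly from the previous proposition, which identifies $Y_k(p\insZ)$ as the union of the residues $\pm 1$ modulo $p^{k+1}$ (when $p>2$) or modulo $2^{k+2}$ (when $p=2$). So the task is purely number-theoretic: translate ``$a\equiv \pm 1$ modulo the relevant modulus'' into the divisibility condition on $a^2-1$ or $(a^2-1)/4$, using the factorization $a^2-1=(a-1)(a+1)$ and the fact that $\gcd(a-1,a+1)$ divides $2$.

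For part (a), the hypothesis that $a$ is even forces $p$ to be odd, so the previous proposition gives $a\in Y_k(p\insZ)$ iff $a\equiv\pm 1\pmod{p^{k+1}}$. Since $p$ is odd and $\gcd(a-1,a+1)\mid 2$, the prime $p$ divides at most one of the factors $a-1,a+1$; hence $p^{k+1}\mid (a-1)(a+1)$ is equivalent to $p^{k+1}\mid (a-1)$ or $p^{k+1}\mid (a+1)$, which is the desired equivalence.

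For part (b), split on the parity of $p$. If $p>2$, the same argument as in (a) shows $a\in Y_k(p\insZ)$ iff $p^{k+1}\mid a^2-1$; since $a$ is odd, $4\mid a^2-1$ and $\gcd(p,4)=1$, so this is equivalent to $p^{k+1}\mid(a^2-1)/4$. If $p=2$, then $a\in Y_k(2\insZ)$ iff $a\equiv\pm 1\pmod{2^{k+2}}$. Here I would use the following $2$-adic valuation computation: since $a$ is odd, exactly one of $a-1,a+1$ has $v_2=1$ (they differ by $2$), and the other has $v_2\geq 2$, so
\begin{equation*}
v_2(a^2-1)=v_2(a-1)+v_2(a+1)=1+\max\{v_2(a-1),v_2(a+1)\}.
\end{equation*}
Therefore $a\equiv\pm 1\pmod{2^{k+2}}$ iff $\max\{v_2(a-1),v_2(a+1)\}\geq k+2$ iff $v_2(a^2-1)\geq k+3$ iff $2^{k+1}\mid (a^2-1)/4$, finishing the argument.

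The proof is essentially bookkeeping given the previous proposition; the only subtle point is the $p=2$ case, where the extra factor of $4$ in the denominator exactly accounts for the jump from $p^{k+1}$ to $2^{k+2}$ in the modulus. I would set up the $2$-adic valuation identity carefully first, since that is the one place a careless reader could lose track of a factor of $2$.
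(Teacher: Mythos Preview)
Your proof is correct and follows essentially the same route as the paper: reduce to the previous proposition's description of $Y_k(p\insZ)$ as $\pm 1+p^{k+1}\insZ$ (or $\pm 1+2^{k+2}\insZ$), then use that $p$ can divide at most one of $a-1,a+1$ when $p$ is odd, and handle $p=2$ via the $2$-adic valuation of $a^2-1$. The only cosmetic difference is that you phrase the $p=2$ case through the identity $v_2(a^2-1)=1+\max\{v_2(a-1),v_2(a+1)\}$, whereas the paper writes out the factorization $a\pm 1=2b,\,2^jc$ directly; the content is identical.
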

\begin{proof}
If $a$ is even, then $p$ is odd. Then, $a\in Y_k(p\insZ)$ if and only if $p^{k+1}$ divides $a-1$ or $a+1$. Since $p$ cannot divide $a-1$ and $a+1$ at the same time, this happens if and only if $p^{k+1}$ divides $a^2-1$. 

If $a$ is odd and $p$ is odd, the same reasoning applies (noting that $p^{k+1}$ divides $a^2-1$ if and only if it divides $\frac{a^2-1}{4})$. If $p=2$, then one between $a-1$ and $a+1$ is in the form $2b$ for $b$ odd, while the other is in the form $2^jc$ with $c$ odd and $j\geq 2$. Hence, $a\in Y_k(2\insZ)$ if and only if $j\geq k+2$, i.e., if and only if $2^{k+3}$ divides $a^2-1$. Dividing by $4$ we have our claim.
\end{proof}

For any $n\inZ$, let now
\begin{equation*}
n^\star:=\begin{cases}
n^2-1 & \text{if~}n\text{~is even},\\
\frac{n^2-1}{4} & \text{if~}n\text{~is odd}.
\end{cases}
\end{equation*}
This notation allows to simplify the previous proposition.

\begin{cor}\label{cor:nstar}
Let $h$ be a self-homeomorphism of $G(\insZ)$, and let $n\inZ$ such that $|n|>1$. If $n^\star$ factors as $p_1^{e_1}\cdots p_t^{e_t}$, then $h(n)^\star$ factors as $q_1^{e_1}\cdots q_t^{e_t}$, where $h(p_i\insZ)=q_i\insZ$.
\end{cor}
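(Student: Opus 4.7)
The plan is to combine the divisibility characterization of $Y_k(p\insZ)$ obtained in the preceding proposition with the invariance $h(Y_k(P)) = Y_k(h(P))$ provided by Proposition \ref{prop:Yk}. First I verify that the hypotheses of Proposition \ref{prop:Yk} hold for every prime $p\insZ$ and every self-homeomorphism $h$ of $G(\insZ)$: the class group of $\insZ$ is trivial, hence torsion; every prime $p\insZ$ is almost cyclic, since $(\insZ/p^n\insZ)^{\times}$ is cyclic for odd $p$ and its quotient by $\{\pm 1\}$ is cyclic for $p = 2$; the unit group $U(\insZ) = \{\pm 1\}$ is finite and therefore closed in every $P$-topology; and $\insZ$ is Dirichlet at each prime by Dirichlet's theorem on primes in arithmetic progressions. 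Finally, $|H_n(p\insZ)|$ is clearly unbounded. By Corollary \ref{cor:almcyc-topinv} the same properties carry over to $h(p\insZ)$, so Proposition \ref{prop:Yk}(b) yields $h(Y_k(p\insZ)) = Y_k(q\insZ)$ whenever $h(p\insZ) = q\insZ$ and $k \geq 0$.

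Next I restate the preceding proposition uniformly: for any prime $p$ and any integer $a$ coprime with $p$, we have $a \in Y_k(p\insZ)$ if and only if $p^{k+1}$ divides $a^{\star}$. In particular, $v_p(a^{\star})$ (the $p$-adic valuation) equals $\sup\{k+1 : a \in Y_k(p\insZ)\}$, with the convention that it is $0$ when $a \notin Y_0(p\insZ)$. I note, moreover, that whenever a prime $p$ divides $n$ we have $p \nmid n^2 - 1$ and hence $p \nmid n^{\star}$, so every prime factor of $n^{\star}$ is automatically coprime with $n$; this justifies applying the above characterization for each such prime.

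Combining the two ingredients, fix $n \inZ$ with $|n| > 1$, write $n^{\star} = p_1^{e_1} \cdots p_t^{e_t}$, and set $q_i \insZ := h(p_i \insZ)$. For each $i$ and each $k \geq 0$, the chain of equivalences
\begin{equation*}
v_{p_i}(n^{\star}) \geq k+1 \iff n \in Y_k(p_i\insZ) \iff h(n) \in Y_k(q_i\insZ) \iff v_{q_i}(h(n)^{\star}) \geq k+1
\end{equation*}
shows that $v_{q_i}(h(n)^{\star}) = e_i$. Conversely, any prime $r$ dividing $h(n)^{\star}$ satisfies $h(n) \in Y_0(r\insZ)$; applying the same invariance to $h^{-1}$, we get $n \in Y_0(p\insZ)$ for the prime $p$ determined by $h(p\insZ) = r\insZ$, so $p$ divides $n^{\star}$ and is therefore some $p_i$, giving $r = q_i$. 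Hence the prime factorization of $h(n)^{\star}$ is exactly $q_1^{e_1} \cdots q_t^{e_t}$, as claimed.

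There is no real obstacle in this argument: all of the substantive work has been done in Proposition \ref{prop:Yk} and in the preceding explicit description of $Y_k(p\insZ)$, and the present statement is essentially a repackaging of those results in the language of prime factorizations through the bookkeeping device $a \mapsto a^{\star}$.
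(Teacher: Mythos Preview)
Your argument is correct and follows the same route as the paper's proof: both combine the invariance $h(Y_k(p\insZ))=Y_k(q\insZ)$ from Proposition~\ref{prop:Yk} with the divisibility characterization of $Y_k(p\insZ)$ from the preceding proposition to read off the exponents in the factorization of $h(n)^\star$. Your write-up is in fact somewhat more careful than the paper's: you explicitly verify the hypotheses needed to invoke Proposition~\ref{prop:Yk}, you note why each relevant prime is coprime with $n$ (resp.\ $h(n)$) so that the divisibility criterion applies, and you close the argument by using $h^{-1}$ to rule out any extraneous prime factors of $h(n)^\star$, a step the paper leaves implicit.
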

\begin{proof}
For every $n$, let $X(n)$ be the set of all pairs $(p,k)$ where $p$ is a prime factor of $n^\star$ and $k$ is the largest integer such that $p^{k+1}$ divides $n^\star$. By the previous proposition, $(p,k)\in X(n)$ if and only if $n\in Y_k(p\insZ)$; hence, $X(n)=\{(p_1,e_1-1),\ldots,(p_t,e_t-1)\}$.

Since $h$ is a homeomorphism, $h(Y_k(p_i\insZ))=Y_k(q_i\insZ)$; thus, $X(h(n))=\{(q_1,e_1-1),\ldots,(q_t,e_t-1)\}$. It follows that $h(n)^\star=q_1^{e_1}\cdots q_t^{e_t}$, as claimed.
\end{proof}

Note that the previous corollary is similar to Proposition \ref{prop:DP}, in that it compares the factorization structures of two elements linked by a homeomorphism $h$. However, this result is much more precise, since it applies to every integer (unless only the $\eta(P)$) and, more importantly, the relationship between the corresponding factors $p_i$ and $q_i$ is uniform for every $n$.

\begin{lemma}\label{lemma:star}
Let $n,m\inZ$.
\begin{enumerate}[(a)]
\item If $n$ and $m$ are both even or both odd, then $n^\star=m^\star$ if and only if $|n|=|m|$.
\item If $|n|>1$ and $n^\star$ is prime, then $|n|\in\{2,3\}$.
\end{enumerate}
\end{lemma}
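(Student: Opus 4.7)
Both statements follow from the explicit factorizations of $n^\star$. The plan is to treat the even and odd cases separately and exploit the identity $n^2 - 1 = (n-1)(n+1)$.

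For part (a), if $n$ and $m$ are both even then $n^\star = m^\star$ is equivalent to $n^2 - 1 = m^2 - 1$, hence to $n^2 = m^2$, hence to $|n| = |m|$. If $n$ and $m$ are both odd, the same equivalence holds after multiplying by $4$, since $n^\star = \frac{n^2-1}{4}$ in that case. This part is purely formal.

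For part (b), I would split again according to the parity of $n$. When $n$ is even, $n^\star = (n-1)(n+1)$ is a factorization into two integers, and for $n^\star$ to be prime one factor must be $\pm 1$. Since $|n| > 1$, both $|n-1|$ and $|n+1|$ are at least $1$; the only way one equals $1$ is $|n| = 2$, in which case $n^\star = 3$ is indeed prime. When $n$ is odd, one observes that $\frac{n-1}{2}$ and $\frac{n+1}{2}$ are \emph{consecutive integers} whose product is $n^\star$, so for $n^\star$ to be prime one of them must be $\pm 1$. The cases $n = \pm 1$ are excluded by the hypothesis $|n| > 1$, leaving exactly $|n| = 3$, where $n^\star = 2$ is prime.

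There is no real obstacle: the argument is a case analysis whose only mild subtlety is remembering that in the odd case the two factors of $n^\star$ are $\frac{n\pm 1}{2}$ (not $n \pm 1$), because exactly one of $n-1, n+1$ is divisible by $4$ while the other is $2$ times an odd number. Once this observation is in place, the possible primes arise only from the trivial factorizations described above, and the conclusion $|n| \in \{2,3\}$ is immediate.
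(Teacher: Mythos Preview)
Your proof is correct and follows the same factorization-based case analysis as the paper's. In the odd case you use the slightly cleaner observation that $n^\star=\frac{n-1}{2}\cdot\frac{n+1}{2}$ is a product of two consecutive integers (forcing one factor to be $\pm 1$), whereas the paper instead argues that for $|n|>3$ odd the number $n^\star$ is even and also has an odd prime divisor; the two arguments are minor variations of the same idea.
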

\begin{proof}
The first claim follows directly from the definition. For the second one, since $n^\star=|n|^\star$ we can suppose without loss of generality that $n>0$. If $n>3$ is even, then both $n-1$ and $n+1$ have a (distinct) prime factor, and thus $n^\star=n^2-1=(n-1)(n+1)$ has at least two factors. If $n>3$ is odd, then one between $n-1$ and $n+1$ is divisible by $4$ and the other one by $2$, so that $n^\star$ is even; however, since $n-1>2$, there is at least one odd prime dividing $n-1$ or $n+1$, and thus $n^\star$ has at least two prime factors. The claim is proved.
\end{proof}

\begin{teor}\label{teor:selfOmefZ}
The unique self-homeomorphisms of $G(\insZ)$ are the identity and the multiplication by $-1$.
\end{teor}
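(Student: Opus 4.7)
The plan is to show that any self-homeomorphism $h$ of $G(\insZ)$ must, up to composition with $\psi_{-1}$, be the identity. I first normalize: since by \cite[Theorem 13]{clark-golomb} $h$ carries units to units and $U(\insZ)=\{\pm 1\}$, we have $h(1)\in\{\pm 1\}$, and after replacing $h$ by $\psi_{-1}\circ h$ if necessary we may assume $h(1)=1$; injectivity then forces $h(-1)=-1$. Proposition \ref{prop:GDelta} gives a bijection $\sigma$ of primes with $h(p\insZ)=\sigma(p)\insZ$, and the strategy is: (i) show $\sigma=\mathrm{id}$; (ii) deduce $h(n)=\pm n$; (iii) pin down the sign.

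For step (i), one computes $\eta(2\insZ)=\eta(3\insZ)=1$ while $\eta(p\insZ)=(p-1)/2\geq 2$ for $p\geq 5$, so Proposition \ref{prop:DP} forces $\sigma$ to permute $\{2,3\}$. To exclude the swap, apply Corollary \ref{cor:nstar} to $n=7$: from $7^\star=12=2^2\cdot 3$, the swap would give $h(7)^\star=18$, which admits no integer solution $m$ of $m^\star=18$. Hence $\sigma(2)=2$ and $\sigma(3)=3$. For $p\geq 5$ I induct on $p$: every prime factor of $p^\star=(p-1)(p+1)/4$ is $<p$ (since $p+1$ is even, hence not an odd prime, and $p$ divides neither $p-1$ nor $p+1$), so $\sigma$ fixes them all by induction; then Corollary \ref{cor:nstar} combined with $h(p)=\pm\sigma(p)$ gives $\sigma(p)^\star=p^\star$, and Lemma \ref{lemma:star}(a) forces $\sigma(p)=p$.

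Step (ii) then follows at once: $\sigma(2)=2$ implies via Proposition \ref{prop:GDelta} that $h$ preserves parity, so Corollary \ref{cor:nstar} together with Lemma \ref{lemma:star}(a) gives $|h(n)|=|n|$, i.e., $h(n)=\pm n$ for every $n\in\insZ^\nz$. For step (iii), Proposition \ref{prop:Yk}(b) with $\sigma=\mathrm{id}$ yields $h(Y_k(p\insZ))=Y_k(p\insZ)$ for all primes $p$ and $k\geq 0$, where (by the proposition preceding Corollary \ref{cor:nstar}) $Y_k(p\insZ)$ is the disjoint union of $1+p^{k+1}\insZ$ and $-1+p^{k+1}\insZ$ for $p$ odd, and the analogous statement modulo $2^{k+2}$ for $p=2$. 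By Theorem \ref{teor:ptop}, $h$ is a homeomorphism of $\insZ\setminus p\insZ$ in the $p$-topology, so it permutes the two disjoint clopen cosets inside $Y_k(p\insZ)$; since $h(1)=1$ and $h(-1)=-1$, each such coset must be fixed setwise.

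For each $n$ with $|n|>1$ I then find an $h$-invariant coset $\pm 1+p^{k+1}\insZ$ containing $n$: for $n\neq\pm 3$, an elementary observation (two powers of $2$ differ by $2$ only at $\{2,4\}$) shows $n^2-1$ has an odd prime factor $p$, so $n\equiv\pm 1\pmod p$; then $h(n)$ lies in the same coset as $n$, and $h(n)=-n$ would force $p\mid 2n$, impossible as $p$ is odd and coprime to $n$. The cases $n=\pm 3$ are handled via the $p=2$ coset modulo $4$: $3\in -1+4\insZ$ while $-3\notin -1+4\insZ$, forcing $h(3)=3$, and symmetrically $h(-3)=-3$. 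The main difficulty lies in this last stage: Corollary \ref{cor:nstar} alone yields only $h(n)=\pm n$, and pinning down the signs requires combining the fine coset-preservation of Proposition \ref{prop:Yk} with the elementary observation singling out $n=\pm 3$.
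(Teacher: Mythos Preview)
Your steps (i) and (ii) are essentially correct and in fact organize the induction more cleanly than the paper does: by inducting only over primes you sidestep the paper's separate treatment of the case where $|n|+1$ is prime, since for an odd prime $p$ every prime factor of $p^\star=(p-1)(p+1)/4$ is automatically below $p$. One small correction in step (i): you invoke ``$h(p)=\pm\sigma(p)$'', but a priori Proposition~\ref{prop:GDelta} only gives $h(p)=\pm\sigma(p)^k$ for some $k\geq 1$. The repair is immediate: Corollary~\ref{cor:nstar} together with the inductive hypothesis gives $h(p)^\star=p^\star$, and since $\sigma(2)=2$ forces $h(p)$ to be odd, Lemma~\ref{lemma:star}(a) yields $|h(p)|=p$; then $\sigma(p)^k=p$ forces $k=1$ and $\sigma(p)=p$.

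Step (iii), however, has a genuine gap. You assert that because $h$ is a homeomorphism in the $p$-topology and $Y_k(p\insZ)$ is the disjoint union of the two clopen cosets $1+p^{k+1}\insZ$ and $-1+p^{k+1}\insZ$, the map $h$ must permute these two cosets. This does not follow: a self-homeomorphism of a totally disconnected space can scramble a two-piece clopen partition arbitrarily, and nothing singles out the partition $\{1+p^{k+1}\insZ,\,-1+p^{k+1}\insZ\}$ among the many clopen bipartitions of $Y_k$. Nor can these cosets be recovered as connected components in the Golomb topology: for instance $1+p^2\insZ$ is clopen inside $1+p\insZ$, since by Lemma~\ref{lemma:chiusura} one has $\overline{1+p^2\insZ}\cap(1+p\insZ)=1+p^2\insZ$, so $1+p\insZ$ is already disconnected. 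Consequently your sign argument, which for each $n$ places $n$ in one of the two cosets and concludes that $h(n)$ lies in the same one, is not justified as written.

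The paper resolves the sign globally and without any coset-by-coset analysis: once $h(n)=\pm n$ is known for all $n$, the sets $X=\{n:h(n)=n\}$ and $Y=\{n:h(n)=-n\}$ are each closed (being fixed-point sets of the homeomorphisms $h$ and $-h$ in the Hausdorff space $G(\insZ)$), disjoint, and cover $\insZ^\nz$; connectedness of $G(\insZ)$ then forces one of them to be empty. Under your normalization $h(1)=1$ this gives $Y=\emptyset$, hence $h=\mathrm{id}$.
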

\begin{proof}
Let $h:G(\insZ)\longrightarrow G(\insZ)$ be a self-homeomorphism of $\insZ$. We first claim that, for every $n\inZ$, $|h(n)|=n$; we proceed by induction on $n$.

If $|n|=1$ then $n$ is a unit and thus $h(n)\in U(\insZ)=\{\pm 1\}$.

Suppose $|n|=2$. Then, $n^\star=3$, and thus $h(n)^\star$ must be a prime number; by the previous lemma, $h(n)\in\{\pm 2,\pm 3\}$. Suppose that $|h(n)|=3$, so in particular $h(2\insZ)=3\insZ$ and $h(3\insZ)=2\insZ$. Consider $m=7$: then, $m^\star=12=2^2\cdot 3$, and thus by Corollary \ref{cor:nstar} $h(m)^\star$ must be equal to $3^2\cdot 2=18$. Since $h(m)\notin 2\insZ=h(3\insZ)$, we have $m^2=18\cdot 4+1=73$, a contradiction. Hence $h(n)\in\{\pm 2\}$, and at the same time $h(\pm 3)\in\{\pm 3\}$.

Suppose now the claim holds for $|m|<|n|$, with $|n|\geq 4$. In particular, $h(p\insZ)=p\insZ$ for all prime numbers $p$ with $p<|n|$; since $h(2\insZ)=2\insZ$, $n$ and $h(n)$ are either both even or both odd. Let $a:=|n|+1$ and $b:=|n|-1$; then, $n^\star=ab$ or $n^\star=\frac{ab}{4}$ (according to whether $n$ is even or odd). If $a$ is not prime, then all prime factors of $a$ and $b$ are smaller than $|n|$; hence, if $n^\star=p_1^{e_1}\cdots p_n^{e_n}$ by Corollary \ref{cor:nstar} then also $h(n)^\star=p_1^{e_1}\cdots p_n^{e_n}$, and thus $n^\star=h(n)^\star$; by Lemma \ref{lemma:star}, $|n|=|h(n)|$.

Suppose that $a$ is prime: then, $n$ must be even. Hence, $n^\star=(n-1)a$, and by Corollary \ref{cor:nstar} and inductive hypothesis we have $h(n)^\star=(n-1)a'$ for some prime number $a'$. If $|h(n)|\neq|n|$, then $|h(n)|>|n|$ (since all $m$ with $|m|<|n|$ are image of $m$ or $-m$), and since $h(n)$ is even both $|h(n)|-1$ and $|h(n)|+1$ are bigger than $b$. Since $h(n)^\star=(|h(n)|-1)(|h(n)|+1)=(n-1)a'$ and $a'$ is prime, it follows that $a'$ should divide both $|h(n)|-1$ and $|h(n)|+1$, and thus that $a'=2$. However, $h(n)^\star$ is odd; this is a contradiction, and thus $|h(n)|=|n|$.

\medskip

Set now $X:=\{n\inZ\mid h(n)=n\}$ and $Y:=\{n\inZ\mid h(n)=-n\}$: by the previous part of the proof, $X\cup Y=\insZ$, and since $0\notin G(\insZ)$ they are disjoint.

Both sets are closed in $G(\insZ)$: indeed, $X$ is the set of fixed points of $h$, which is closed since $G(\insZ)$ is Hausdorff, while $Y$ is the set of fixed point of $-h$ (i.e., the homeomorphism that sends $n$ to $-h(n)$). Since $G(\insZ)$ is connected \cite[Theoerm 8(b)]{clark-golomb}, they can't be both nonempty: hence, either $X=\emptyset$ (and thus $h$ is the multiplication by $-1$) or $Y=\emptyset$ (and thus $h$ is the identity). The claim is proved.
\end{proof}

\begin{teor}\label{teor:iso-Z}
Let $K$ be an algebraic extension of $\insQ$, and let $R$ be a Dedekind domain with quotient field $K$. If $G(R)\simeq G(\insZ)$, then $R=\insZ$.
\end{teor}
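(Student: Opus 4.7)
The plan is to first pin down $R$ algebraically using $U(R)$, and then use the almost-cyclicity machinery of Section \ref{sect:clos} to rule out the one remaining case. By \cite[Theorem 13]{clark-golomb}, a homeomorphism $G(\insZ)\simeq G(R)$ sends units to units, so $|U(R)|=2$; since $R$ is an integral domain of characteristic zero, $U(R)=\{\pm 1\}$. Because $R$ is integrally closed in $K$, it contains the integral closure of $\insZ$ in $K$, and therefore $\mathcal{O}_L\subseteq R$ for every finite subextension $L/\insQ$ of $K$; this gives $|U(\mathcal{O}_L)|\leq 2$. By Dirichlet's unit theorem such an $L$ must be either $\insQ$ or imaginary quadratic, and since the compositum of two distinct imaginary quadratic fields contains a real quadratic subfield (whose ring of integers has infinite unit group), $K$ itself must be either $\insQ$ or a single imaginary quadratic field.

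If $K=\insQ$ then $R=\insZ[S^{-1}]$ for some set $S$ of primes, and inverting any prime would enlarge $U(R)$; thus $R=\insZ$. If $K$ is imaginary quadratic, the same overring reasoning (any prime inverted produces a new unit, as some power of it is principal by finiteness of the class group) forces $R=\mathcal{O}_K$; moreover $K\notin\{\insQ(i),\insQ(\sqrt{-3})\}$, since otherwise $|U(\mathcal{O}_K)|>2$. I must rule out this remaining case.

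For this I will invoke Corollary \ref{cor:almcyc-topinv}. The class groups of both $\insZ$ and $\mathcal{O}_K$ are torsion (trivially, and by finiteness, respectively); $\insZ$ is Dirichlet at each of its primes by Dirichlet's theorem on primes in arithmetic progressions; $U(\insZ)=\{\pm 1\}$ is closed in every $p$-topology; and every prime $p\insZ$ is almost cyclic, since $(\insZ/p^n\insZ)^\times/\{\pm 1\}$ is cyclic for all $n$ (for $p$ odd this is classical, while for $p=2$ the quotient is cyclic of order $2^{\max(n-2,0)}$). Applying Corollary \ref{cor:almcyc-topinv}(a) to the homeomorphism $h\colon G(\insZ)\to G(\mathcal{O}_K)$, together with the bijection of primes from Proposition \ref{prop:GDelta}, every prime of $\mathcal{O}_K$ must also be almost cyclic. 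However, by quadratic reciprocity there exists an odd rational prime $p$ that is inert in $K$; for the unique prime $\mathfrak{p}$ of $\mathcal{O}_K$ above $p$, the quotient $\mathfrak{p}/\mathfrak{p}^2$ is one-dimensional over the residue field $\ins{F}_{p^2}$, so via the isomorphism $1+x\leftrightarrow x$ (valid in $\mathcal{O}_K/\mathfrak{p}^2$ for $p$ odd) the decomposition $U(\mathcal{O}_K/\mathfrak{p}^2)\cong\mu_{p^2-1}\times(1+\mathfrak{p}/\mathfrak{p}^2)$ yields
\begin{equation*}
H_2(\mathfrak{p})\;\cong\;\insZ/\tfrac{p^2-1}{2}\insZ\,\times\,(\insZ/p\insZ)^2,
\end{equation*}
which is manifestly not cyclic, contradicting the almost cyclicity of $\mathfrak{p}$. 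The main obstacle is identifying the right invariant: once an inert prime and the exponent $n=2$ are chosen the computation is short, but seeing that non-cyclicity of $H_n$ at an inert prime (rather than, say, an invariant extracted from $\setdiv{P}$ via Proposition \ref{prop:DP}) is the correct obstruction requires a little thought.
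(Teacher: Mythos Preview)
Your proof is correct and follows essentially the same route as the paper's: reduce via the unit count and Dirichlet's unit theorem to $R=\mathcal{O}_K$ with $K$ imaginary quadratic, then pick an inert prime and show the corresponding prime ideal is not almost cyclic, contradicting Corollary~\ref{cor:almcyc-topinv}. Two minor differences worth noting: you handle the possibility that $K/\insQ$ is infinite more carefully than the paper (which invokes Dirichlet's unit theorem directly), and your explicit computation of $H_2(\mathfrak{p})$ can be replaced by a one-line appeal to Lemma~\ref{lemma:almcyc-cardRP}, since $|R/\mathfrak{p}|=p^2$ is not prime.
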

\begin{proof}
By \cite[Theorem 13]{clark-golomb}, the number of units is an invariant of the Golomb topology, and thus $|U(R)|=2$. By Dirichlet's Unit Theorem (see e.g. \cite[Chapter 1, \textsection 7]{neukirch}), $[K:\insQ]\leq 2$. Furthermore, if $R$ is not the ring of integers $\mathcal{O}_K$ of $K$, then there is a prime ideal of $\mathcal{O}_K$ such that $PR=R$; since $\mathcal{O}_K$ has torsion class group, there are elements of $\mathcal{O}_K$ generating a $P\cap\mathcal{O}_K$-primary ideal, and they would be units of $R$, a contradiction. Hence $R=\mathcal{O}_K$.

If $K\neq\insQ$, then there is a prime number $p$ which is inert in $R$; then, $R/pR$ is a field with $p^2$ elements, and by Lemma \ref{lemma:almcyc-cardRP} it follows that $pR$ is not almost cyclic. However, $pR=h(q\insZ)$ for some prime number $q$; since $q\insZ$ is almost cyclic and $\insZ$ is Dirichlet at every prime ideal, this contradicts Corollary \ref{cor:almcyc-topinv}. Hence, $K=\insQ$ and $R=\insZ$, as claimed.
\end{proof}

\bibliographystyle{plain}
\bibliography{/bib/articoli,/bib/libri,/bib/miei}
\end{document}